\newcommand{\ov}{\overset{ \mbox{\tiny def}}{=}}
\renewcommand{\div}{\operatorname{div}}
\newcommand{\Cof}{\operatorname{Cof}}
\renewcommand{\leq}{\leqslant}
\renewcommand{\geq}{\geqslant}
\renewcommand{\Re}{\operatorname{Re}}
\newcounter{tak}
\numberwithin{equation}{section}
\newtheorem{Theorem}{Theorem}[section]
\newtheorem{Corollary}[Theorem]{Corollary}
\newtheorem{Proposition}[Theorem]{Proposition}
\newtheorem{Lemma}[Theorem]{Lemma}
\newtheorem{Remark}[Theorem]{Remark}
\title{Gevrey regularity for a system coupling the Navier-Stokes system with a beam: the non-flat case} 
\author[1]{Mehdi Badra}
\author[2]{Tak\'eo Takahashi}
\affil[1]{Institut de Math\'ematiques de Toulouse ; UMR5219;
Universit\'e de Toulouse ; CNRS ; UPS IMT, F-31062 Toulouse Cedex 9, France\\
\it{mehdi.badra@math.univ-toulouse.fr }}
\affil[2]{Universit\'e de Lorraine, CNRS, Inria, IECL, F-54000 Nancy, France, \it{takeo.takahashi@inria.fr}}
\date{\today}
\begin{document}
\maketitle

\abstract{
We consider a bi-dimensional viscous incompressible fluid in interaction with 
a beam located at its boundary. We show the existence of strong solutions for this fluid-structure interaction system, extending a previous result
\cite{plat} where we supposed that the initial deformation of the beam was small. 
The main point of the proof consists in the study of the linearized system and in particular in proving that the corresponding semigroup is of Gevrey class. 
}

\vspace{1cm}

\noindent {\bf Keywords:} fluid-structure, Navier-Stokes system, Gevrey class semigroups

\noindent {\bf 2010 Mathematics Subject Classification.}  76D03, 76D05, 35Q74, 76D27

\tableofcontents

\newpage
\section{Introduction}
This work is devoted to the mathematical analysis of a fluid-structure interaction system where the fluid is modeled by the Navier-Stokes system whereas the structure is a beam situated at a part of the fluid domain. We consider here the bi-dimensional case in space, that is the fluid domain is a subset of $\mathbb{R}^2$ whereas the beam domain is an interval. Another important assumption for our analysis is to assume periodic boundary conditions in the direction orthogonal to the beam deformation. To be more precise, let $L>0$ be the length of the beam and let us set
\begin{equation}\label{np004}
\mathcal{I}\ov \mathbb{R}/L\mathbb{Z}.
\end{equation}
For any deformation $\eta : \mathcal{I} \to (-1,\infty)$, we also consider the corresponding fluid domain
\begin{equation}\label{gev0.0}
\mathcal{F}_{\eta}\ov  \left \{(x_1,x_2)\in \mathcal{I}\times \mathbb{R} \ ; \ x_2\in (0,1+\eta(x_1))\right \}.
\end{equation}
The boundary of $\mathcal{F}_{\eta}$ can be splitted into a ``deformable'' part
\begin{equation}\nonumber
\Gamma_{\eta} \ov  \left \{(s,1+\eta(s)), \; s\in \mathcal{I} \right \},
\end{equation}
and a ``fixed'' part
\begin{equation}\nonumber
\Gamma_{-1} \ov  \mathcal{I}\times \{0\}.
\end{equation}
We recall the geometry in Figure \ref{F1}.
\begin{figure}\label{F1}
\begin{center}
\begin{tikzpicture}
\draw (0,0) -- (4,0);
\draw (0,2) -- (0,0);
\draw (4,0) -- (4,2);
\draw [domain=0:4,samples=30] plot (\x, {2+0.1*sin(pi/2*\x r)-0.2*sin(pi*\x r)});
\draw (2,1) node {$\mathcal{F}_{\eta}$};
\draw (2,0) node[below] {$\Gamma_{-1}$};
\draw (0,1) node {$-$};
\draw (0,0.95) node {$-$};
\draw (4,1) node {$-$};
\draw (4,0.95) node {$-$};
\draw (2,2) node[above] {$\Gamma_{\eta}$};
\draw (0,0) node[below] {$0$};
\draw (4,0) node[below] {$L$};
\draw[dashed] (4,0) -- (8,0);
\draw[dashed] (8,0) -- (8,2);
\draw [dashed, domain=4:8,samples=30] plot (\x, {2+0.1*sin(pi/2*\x r)-0.2*sin(pi*\x r)});
\draw[dashed] (-4,0) -- (0,0);
\draw[dashed] (-4,2) -- (-4,0);
\draw [dashed, domain=-4:0,samples=30] plot (\x, {2+0.1*sin(pi/2*\x r)-0.2*sin(pi*\x r)});
\end{tikzpicture}
\end{center}
\caption{Our geometry}
\end{figure}

Let us denote by $v$ and $p$ the velocity and the pressure of the fluid. Then, the system modeling the interaction between the viscous incompressible fluid and the beam is
\begin{equation}\label{tak2.3}
\left\{\begin{array}{c}
\partial_t v +(v\cdot \nabla) v-\div \mathbb{T}(v,p) = 0, \quad t>0, \  x\in \mathcal{F}_{\eta(t)},\\
\div v =   0,   \quad t>0, \ x\in \mathcal{F}_{\eta(t)},\\
v(t,s,1+\eta(t,s)) = (\partial_t \eta)(t,s) e_2, \quad t>0, \ s\in \mathcal{I},\\
v = 0  \quad t>0, \ x\in \Gamma_{-1},\\
\partial_{tt} \eta + \alpha_1 \partial_{ssss} \eta-\alpha_2 \partial_{ss} \eta 
  =-\widetilde{\mathbb{H}}_{\eta}(v,p), \quad t>0, \  s\in \mathcal{I},\\
\end{array}\right.
\end{equation}
with the initial conditions
\begin{equation}\label{su4.3}
\eta(0,\cdot)=\eta_1^0, \quad \partial_{t} \eta(0,\cdot)=\eta_2^0 \quad\mbox{ and }\quad  v(0, \cdot)=v^0 \ \text{in} \ \mathcal{F}_{\eta_1^0}.
\end{equation}

The two first equations correspond to the Navier-Stokes system, whereas the last equation is the beam equation. We have considered the no-slip boundary conditions (third and forth equations). The canonical basis of $\mathbb{R}^2$ is denoted by $(e_1, e_2)$ and we have also used the following notations:
\begin{equation}\label{ws4.6}
\mathbb{T}(v,p) \ov 2 \nu D(v)- p I_2,\quad D(v)= \frac{1}{2}\left(\nabla v + (\nabla v)^*\right),
\end{equation}
\begin{equation}\label{ws4.7}
\widetilde{\mathbb{H}}_{\eta}(v,p)\ov \left\{ (1+|\partial_s \eta|^2)^{1/2} \left[\mathbb{T}(v,p)n\right](t,s,1+\eta(t,s))\cdot e_2 \right\}.
\end{equation}
We assume that the constants satisfy
$$
\nu>0 \ \text{(viscosity)}, \quad \alpha_1>0, \quad \alpha_2\geq 0.
$$
Finally, the vector fields $n$ is the unit exterior normal to $\mathcal{F}_{\eta(t)}$: $n=-e_2$ on $\Gamma_{-1}$ and on $\Gamma_{\eta(t)}$,
\begin{equation}\label{DefnormalStruct}
n(t,x_1,x_2)=\frac{1}{\sqrt{1+|\partial_s \eta(t,x_1)|^2}}
\begin{bmatrix}
-\partial_s \eta(t,x_1)
\\
1
\end{bmatrix}. 
\end{equation}

An important remark is that a solution to \eqref{tak2.3} satisfies
$$
\frac{d}{dt} \int_0^L \eta(t,s)\ ds=0. 
$$
By assuming that the mean value of $\eta_1^0$ is zero, this leads to
\begin{equation}\label{ws7.1}
\int_0^L \eta(t,s)\ ds = 0 \quad (t\geq 0).
\end{equation}
We denote by $M$ the orthogonal projection from $L^2(\mathcal{I})$ onto $L^2_{0}(\mathcal{I})$
where
\begin{equation}\label{gev9.0}
L^2_{0}(\mathcal{I}) \ov \left\{f \in L^2(\mathcal{I}) \ ; \ \int_0^L f(s) \ ds=0  \right\}.
\end{equation}
Taking the projection of the last equation of \eqref{tak2.3} on $L^2_{0}(\mathcal{I})$ gives
\begin{equation}\label{beamabs}
\partial_{tt} \eta + A_1\eta 
  =-{\mathbb{H}}_{\eta}(v,p), \quad t>0, \  s\in \mathcal{I},
\end{equation}
where 
\begin{equation}\label{np0060}
{\mathbb{H}}_{\eta}(v,p)\ov M\widetilde{\mathbb{H}}_{\eta}(v,p),
\end{equation}
and where $A_1$ is the operator for the structure defined by
\begin{equation}\label{gev3.3}
\mathcal{H}_{\mathcal{S}} \ov L^2_{0}(\mathcal{I}), \quad \mathcal{D}(A_1)\ov H^4(\mathcal{I})\cap L^2_{0}(\mathcal{I}),
\end{equation}
\begin{equation}\label{gev3.3bis}
A_1 : \mathcal{D}(A_1)\to \mathcal{H}_{\mathcal{S}}, \quad \eta \mapsto \alpha_1 \partial_{ssss} \eta -\alpha_2 \partial_{ss}\eta.
\end{equation}
One can check that for any $\theta\geq 0$,
\begin{equation}\label{gev3.4}
\mathcal{D}(A_1^\theta) =  H^{4\theta}(\mathcal{I})\cap L^2_{0}(\mathcal{I}).
\end{equation}

The projection of the last equation of \eqref{tak2.3} on $L^2_{0}(\mathcal{I})^\perp$ allows us to determine the constant for the pressure (see \cite{plat} for more details): at the contrary to the classical Navier-Stokes system without structure, here the pressure is not determined up to a constant. 

The classical Lebesgue and Sobolev spaces  are denoted by $L^\alpha$, $H^k$ and we use the notation $C^0$ for the space of continuous maps and $C^0_b$ for the space of continuous and bounded maps. 
We use the bold notation for the spaces of vector fields: ${\bf L}^\alpha=(L^\alpha)^2$, ${\bf H}^k=(H^k)^2$ etc. 
Since the fluid domain is moving, we introduce spaces of the form 
$H^1(0,T;L^q(\mathcal{F}_{\eta}))$, $L^2(0,T;H^k(\mathcal{F}_{\eta}))$, etc. with $T\leq \infty$. 
If $\eta(t,\cdot)>-1$ $(t\in (0,T))$, then 
$$
v\in H^1(0,T;L^q(\mathcal{F}_{\eta})) \quad \text{if} \quad
y\mapsto v(t,y_1, y_2(1+\eta(t,y_1))\in H^1(0,T;L^q(\mathcal{F}_{0}))
$$ 
and similarly, for the other spaces. We also write
$$
H^\alpha_{0}(\mathcal{I}) \ov H^\alpha(\mathcal{I})\cap L^2_{0}(\mathcal{I}) \quad (\alpha \geq 0).
$$
Finally, we use $C$ as a generic positive constant that does not depend on the other terms of the inequality.
The value of the constant $C$ may change from one appearance to another.

Let us write our hypotheses for the initial conditions: there exists $\varepsilon>0$ such that
\begin{equation}\label{bea8.4}
\eta^0_1\in W^{7,\infty}(\mathcal{I})\cap L_{0}^{2}(\mathcal{I}), 
\quad
\eta^0_2\in H_{0}^{1+\varepsilon}(\mathcal{I})
\quad 
\eta_1^0 > -1 \quad \text{in} \ \mathcal{I},
\end{equation}
\begin{equation}\label{bea8.5}
v^0\in {\bf H}^1(\mathcal{F}_{\eta_1^0}),
\end{equation}
with
\begin{gather}
\div v^0=0 \ \text{in} \  \mathcal{F}_{\eta_1^0},\quad
v^0(s,1+\eta_1^0(s)) = \eta_2^0(s) e_2   \quad s\in \mathcal{I},\quad
v^0 = 0  \quad \text{on} \ \Gamma_{-1}.\label{bea8.7}
\end{gather}

Our main result on \eqref{tak2.3} is the existence and uniqueness of strong solutions for small times:
\begin{Theorem}\label{T01}
For any $[v^0, \ \eta^0_1, \ \eta^0_2]$ satisfying \eqref{bea8.4}--\eqref{bea8.7}, 
there exist $T>0$ and a strong solution 
$(\eta, v, p)$ of \eqref{tak2.3} with
\begin{equation}\label{contact1}
\eta(t,\cdot) > -1 \quad t\in [0,T],
\end{equation}
\begin{equation}\label{reg-vp}
v\in L^2(0,T;{\bf H}^2(\mathcal{F}_{\eta})\cap C^0([0,T];{\bf H}^1(\mathcal{F}_{\eta})) \cap H^1(0,T;{\bf L}^2(\mathcal{F}_{\eta})),
\quad
p\in L^2(0,T;H^1(\mathcal{F}_{\eta})),
\end{equation}
\begin{equation}\label{reg-eta}
\begin{array}{c}
\eta \in L^2(0,T;H_{0}^{7/2}(\mathcal{I}))\cap C^0([0,T];H_{0}^{5/2}(\mathcal{I})) \cap H^1(0,T;H_{0}^{3/2}(\mathcal{I})),
\\[0.1cm]
\partial_t \eta \in L^2(0,T;H_{0}^{3/2}(\mathcal{I}))\cap C^0([0,T];H_{0}^{1/2}(\mathcal{I})) \cap H^1(0,T;(H_{0}^{1/2}(\mathcal{I}))'),
\end{array}
\end{equation}
the first four equations of \eqref{tak2.3} are satisfied almost everywhere or in the trace sense 
and \eqref{beamabs} holds in $L^2(0,T;H_{0}^{1/2}(\mathcal{I})')$.

{This solution is unique locally: if $(\eta^{(*)}, v^{(*)}, p^{(*)})$ is another solution with the same regularity, there exists $T^*>0$ such that
$$
(\eta^{(*)}, v^{(*)}, p^{(*)})=(\eta, v, p) \quad \text{on} \ [0,T^*].
$$}
\end{Theorem}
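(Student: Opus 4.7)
The plan is to reduce the problem to a fixed-point argument on a fixed reference domain. The non-flatness of $\eta_1^0$ prevents a direct perturbation of the flat-case analysis in \cite{plat}: the linearization must be performed around $(\eta_1^0,0,0)$ rather than around $(0,0,0)$, so the reference geometry is genuinely curved.

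First I would freeze the geometry by a change of variables $X(t,\cdot):\mathcal{F}_{\eta_1^0}\to\mathcal{F}_{\eta(t)}$, for instance $X(t,y_1,y_2)=\bigl(y_1,\,y_2(1+\eta(t,y_1))/(1+\eta_1^0(y_1))\bigr)$, or a more regular extension in $y_2$. Writing $u=v\circ X$, $q=p\circ X$ and $\widetilde\eta=\eta-\eta_1^0$, the coupled system \eqref{tak2.3} becomes a problem posed on the fixed domain $\mathcal{F}_{\eta_1^0}$ with unknowns $(u,q,\widetilde\eta)$. Splitting it into a principal linear part obtained by freezing coefficients at $\widetilde\eta=0$, and a nonlinear remainder $F$ collecting all terms quadratic or higher in $(u,\widetilde\eta,\partial_t\widetilde\eta)$, we are led to a Cauchy problem of the form
\[
\partial_t U + \mathcal{A} U = F(U), \qquad U(0)=U^0,
\]
on a Hilbert space $\mathcal{H}$ encoding the fluid velocity on $\mathcal{F}_{\eta_1^0}$ together with the beam displacement in a suitable energy space and the beam velocity in $L^2_0(\mathcal{I})$.

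The core step, and the main obstacle, is to prove that $-\mathcal{A}$ generates a Gevrey class semigroup on $\mathcal{H}$. This amounts to resolvent estimates of the form $\|(\lambda-\mathcal{A})^{-1}\|_{\mathcal{L}(\mathcal{H})}\leq C/|\lambda|^{\beta}$ for $\lambda$ in a region $\{\Re\lambda\geq -c|\Im\lambda|^{\alpha}\}$, for suitable $\alpha,\beta\in(0,1)$. In the non-flat geometry, the Fourier decomposition in the tangential variable exploited in \cite{plat} is no longer available. Instead I would analyse the Stokes resolvent problem on $\mathcal{F}_{\eta_1^0}$ with Dirichlet data coming from the beam, compute the associated normal stress $\mathbb{H}_{\eta_1^0}(v,p)$ on $\Gamma_{\eta_1^0}$, feed it back into the beam resolvent equation, and close the estimate by an interpolation inequality between fluid dissipation and beam elastic energy. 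The regularity $\eta_1^0\in W^{7,\infty}(\mathcal{I})$ and the strict inequality $\min(1+\eta_1^0)>0$ in \eqref{bea8.4} ensure that the constants in all the Stokes estimates depend only on $\|\eta_1^0\|_{W^{7,\infty}}$ and on this lower bound.

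Once the Gevrey property is established, it provides maximal regularity on bounded time intervals in precisely the function spaces of \eqref{reg-vp}--\eqref{reg-eta}. I would then solve the mild formulation $U=e^{-t\mathcal{A}}U^0+\int_0^t e^{-(t-\sigma)\mathcal{A}}F(U(\sigma))\,d\sigma$ in a ball of the corresponding regularity space: the Gevrey smoothing gains the derivatives needed to control $F$ as a locally Lipschitz map, and for $T$ small the solution map becomes a strict contraction. The compatibility conditions \eqref{bea8.4}--\eqref{bea8.7} guarantee that $U^0$ lies in the fractional power domain of $\mathcal{A}$ required to initiate the estimates. Returning to the moving frame, continuity of $\eta$ together with $\eta_1^0>-1$ yields \eqref{contact1} for $T$ small, and applying the contraction argument to the difference of two solutions produces the local uniqueness statement.
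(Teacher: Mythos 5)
Your high-level outline tracks the paper faithfully: change of variables to freeze the domain at $\mathcal{F}_{\eta_1^0}$, reduction to a linear Cauchy problem whose generator must be shown to give a Gevrey-class semigroup via resolvent estimates, then a small-time fixed point. That much is right, and you correctly identify that the flat-case (\(\eta_1^0=0\)) argument of \cite{plat} does not transfer directly.

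However, there is a genuine gap precisely at the step you describe as ``close the estimate by an interpolation inequality between fluid dissipation and beam elastic energy.'' In the flat case this is essentially enough, because the fluid-to-structure maps ($K_\lambda$, $G_\lambda$, the Neumann-to-Dirichlet-type operators built from the Stokes resolvent) commute with $A_1$, so one can diagonalise simultaneously and reduce the analysis of $V_\lambda = \lambda^2(I+K_\lambda)+\lambda G_\lambda + A_1$ to scalar algebra. When $\eta_1^0$ is not flat this commutation fails, and the paper's central new ingredient, which your proposal omits entirely, is a \emph{commutator estimate}: \cref{2017Commutateur1} shows
$\|[A_1^{3/8},K_\lambda]\eta\|_{\mathcal{H}_{\mathcal{S}}} \leq C(|\lambda|^{-1}\|A_1^{1/2+\varepsilon}\eta\|+|\lambda|^\varepsilon\|\eta\|)$,
obtained after pushing the Stokes system to the flat strip $\mathcal{F}_0$ and controlling the commutators of $A_1^{3/8}M$ with the variable-coefficient elliptic operators $\mathbb{L}$, $\mathbb{G}$, $\mathbb{D}$ (\cref{Pcom}). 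It is exactly this estimate that makes the lower bound \eqref{gev5.20} on $\widetilde{V}_\lambda\eta$, and hence the inverse estimates of \cref{T03}, \cref{ThmEstV} and ultimately \cref{CorMain}, go through; it is also why the hypothesis $\eta_1^0\in W^{7,\infty}(\mathcal{I})$ (rather than $H^{3+\varepsilon}$ as in \cite{plat}) appears. Without naming and using this mechanism your resolvent estimate cannot be closed, because the cross term $\langle A_1\eta,(I+K_\lambda)\eta\rangle$ and the term $\mathcal{C}_\lambda$ in \eqref{np0002imp} involving $[A_1^{3/8},K_\lambda]$ do not vanish and must be absorbed.

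A secondary, more cosmetic, difference: the paper's fixed point is run on the source terms $(F,G)$ via the map $\mathcal{Z}$ in \eqref{bea0.4}, not on the state $U$ through Duhamel's formula. Both are viable, but the paper's choice lets it reuse \cref{T04} as a black box for the linear problem rather than manipulating the variation-of-constants formula directly, and it makes the smallness in $T$ come out cleanly through \eqref{11:15}.
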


In order to prove the above result, a first standard step consists in rewriting the Navier-Stokes system in the fixed spatial domain 
\begin{equation}\label{F0}
\mathcal{F}\ov \mathcal{F}_{\eta_1^0},
\end{equation}
by using a change of variables.
Then, one of the main ingredients to obtain \cref{T01} is a result on a linear system associated with \eqref{tak2.3}:
\begin{equation}\label{gev0.3}
\left\{\begin{array}{c}
\partial_t w -\div \mathbb{T}(w,q) = F, \quad t>0, \  y\in \mathcal{F},\\
\div w =   0   \quad t>0, \ y\in \mathcal{F},\\
w(t,s,1+\eta_1^0(t,s)) = (\partial_t \eta)(t,s) e_2  \quad t>0, \ s\in \mathcal{I},\\
\partial_{tt} \eta + A_1 \eta 
  =-\mathbb{H}_{\eta^0_1}(w,q)+G, \quad t>0, \  s\in \mathcal{I},\\
\end{array}\right.
\end{equation}
with the initial conditions 
\begin{equation}\label{bis}
w(0,\cdot)=w^0,
\quad 
\eta(0,\cdot)= \zeta_1^0,
\quad 
\partial_t \eta(0,\cdot)= \zeta_2^0.
\end{equation}

For this system, we have the following result
\begin{Theorem}\label{T04}
Assume
$$
\eta_1^0 \in W^{7,\infty}(\mathcal{I}),\quad \eta_1^0>-1 \quad \text{in} \ \mathcal{I}.
$$
Suppose $F\in L^2(0,\infty;{\bf L}^2(\mathcal{F}))$ and $G\in L^2(0,\infty;\mathcal{D}(A_1^{1/8}))$, $\varepsilon>0$,
\begin{equation}\label{bea8.4L}
\zeta^0_1\in H_{0}^{3+\varepsilon}(\mathcal{I}), 
\quad
\zeta^0_2\in H_{0}^{1+\varepsilon}(\mathcal{I}),
\quad
w^0\in {\bf H}^1(\mathcal{F}),
\end{equation}
\begin{equation}
\div w^0=0 \ \text{in} \  \mathcal{F},
\quad
w^0(s,1+\zeta_1^0(s)) = \zeta_2^0(s) e_2   \quad s\in \mathcal{I},
\quad
w^0 = 0  \quad \text{on} \ \Gamma_{-1}.\label{bea8.7L}
\end{equation}
Then \eqref{gev0.3}-\eqref{bis} admits a unique solution
\begin{equation}\label{bea0.8}
w\in L^2(0,\infty;{\bf H}^2(\mathcal{F}))\cap C^0_b([0,\infty);{\bf H}^1(\mathcal{F})) \cap H^1(0,\infty;{\bf L}^2(\mathcal{F})),
\quad
q\in L^2(0,\infty;H^1(\mathcal{F})/\mathbb{R} ),
\end{equation}
\begin{equation}\label{bea0.9}
\eta \in L^2(0,\infty;\mathcal{D}(A_1^{7/8}))\cap C^0_b([0,\infty);\mathcal{D}(A_1^{5/8})) \cap H^1(0,\infty;\mathcal{D}(A_1^{3/8})),
\end{equation}
and
\begin{equation}\label{bea1.0}
\partial_t \eta \in L^2(0,\infty;\mathcal{D}(A_1^{3/8}))\cap C^0_b([0,\infty);\mathcal{D}(A_1^{1/8})) \cap H^1(0,\infty;\mathcal{D}(A_1^{1/8})').
\end{equation}
Moreover, there exists $C_{0}>0$ such that
\begin{multline}\label{np0100}
\left\| w \right\|_{L^2(0,\infty;{\bf H}^2(\mathcal{F}))\cap C^0_b([0,\infty);{\bf H}^1(\mathcal{F})) \cap H^1(0,\infty;{\bf L}^2(\mathcal{F}))}
+\|q\|_{L^2(0,\infty;H^1(\mathcal{F})/\mathbb{R})}
\\
+\|\eta\|_{L^2(0,\infty;\mathcal{D}(A_1^{7/8}))\cap C^0_b([0,\infty);\mathcal{D}(A_1^{5/8})) \cap H^1(0,\infty;\mathcal{D}(A_1^{3/8}))}
\\
+\|\partial_t \eta \|_{L^2(0,\infty;\mathcal{D}(A_1^{3/8}))\cap C^0_b([0,\infty);\mathcal{D}(A_1^{1/8})) \cap H^1(0,\infty;\mathcal{D}(A_1^{1/8})')}
\\
\leq
C_{0}\left(
\|w^0\|_{{\bf H}^1(\mathcal{F})}
+ 
\|\zeta^0_1\|_{\mathcal{D}(A_1^{3/4+\varepsilon})}
+
\|\zeta^0_2\|_{\mathcal{D}(A_1^{1/4+\varepsilon})}
\right.
\\
+
\left.
\|F\|_{L^2(0,\infty;{\bf L}^2(\mathcal{F}))} 
+
\|G\|_{L^2(0,\infty;\mathcal{D}(A_1^{1/8}))}
\right).
\end{multline}
\end{Theorem}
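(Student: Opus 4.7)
The strategy is to recast \eqref{gev0.3}--\eqref{bis} as an abstract evolution equation and to show that the underlying operator generates a Gevrey class semigroup. Because $\eta_1^0$ is no longer small, the approach of \cite{plat} (perturbation of the flat case) cannot be used directly; instead, the Gevrey property is established on the curved reference configuration $\mathcal{F}$ itself. After lifting the non-homogeneous boundary datum $(\partial_t\eta)e_2$ via a divergence-free extension into $\mathcal{F}$ depending on $\eta_1^0$, one eliminates the pressure through the Leray projection $\mathbb{P}$ and writes the unknown as $z=(\mathbb{P}w,\eta,\partial_t\eta)$ in the Hilbert space $\mathcal{H}=\mathbf{L}^2_{\sigma}(\mathcal{F})\times \mathcal{D}(A_1^{1/2})\times \mathcal{H}_{\mathcal{S}}$. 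This produces an abstract Cauchy problem
\begin{equation*}
z'+\mathcal{A}_{\eta_1^0} z=\mathcal{F}, \qquad z(0)=z^0,
\end{equation*}
where $\mathcal{A}_{\eta_1^0}$ incorporates the Stokes operator on $\mathcal{F}$, the beam operator $A_1$, and the coupling trace terms coming from $\mathbb{H}_{\eta_1^0}$.

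The heart of the proof is the resolvent analysis of $\mathcal{A}_{\eta_1^0}$. I would fix $\lambda\in \mathbb{C}$ in a sector $\Sigma_\theta=\{|\arg\lambda|<\pi/2+\theta\}$ with $|\lambda|$ large, and solve the resolvent system $(\lambda I+\mathcal{A}_{\eta_1^0})z=\mathcal{G}$ as follows: first recover $\eta$ from $\partial_t\eta$ by the identity $(\lambda+A_1\lambda^{-1})\eta=\ldots$, then substitute into the fluid subsystem, and use the explicit structure to obtain a coupled Stokes/beam boundary-value problem in which the coupling comes through the interface trace. Variational estimates and elliptic regularity for the Stokes system on the $W^{7,\infty}$ domain $\mathcal{F}$, together with the smoothing of $A_1$, then yield the key Gevrey-type bound
\begin{equation*}
\|(\lambda I+\mathcal{A}_{\eta_1^0})^{-1}\|_{\mathcal{L}(\mathcal{H})}\leq \frac{C}{|\lambda|^{1-\alpha}}, \qquad \lambda\in\Sigma_\theta,\ |\lambda|\geq R_0,
\end{equation*}
for some $\alpha\in(1/2,1)$; this is the criterion of Taylor/Pr\"uss for a semigroup of Gevrey class $1/(1-\alpha)$, which is known to be greater than one here. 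The regularity threshold $\alpha$ is the source of the powers $1/8,3/8,5/8,7/8$ appearing in \eqref{bea0.9}--\eqref{bea1.0}: they come from interpolation between $\mathcal{D}(A_1^{k/2})$ and $\mathcal{H}_{\mathcal{S}}$ along the Gevrey scale.

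Once the semigroup $(e^{-t\mathcal{A}_{\eta_1^0}})_{t\geq 0}$ is of Gevrey class, maximal regularity on $(0,\infty)$ follows from the abstract results for such semigroups (the resolvent estimate above implies $L^2$ maximal regularity for data in the appropriate trace spaces). I would then treat $F$ and $G$ by Duhamel's formula, gaining the contribution $\|F\|_{L^2(\mathbf{L}^2)}+\|G\|_{L^2(\mathcal{D}(A_1^{1/8}))}$ in \eqref{np0100}. The initial data contribution comes from evaluating the semigroup on $z^0$: the condition $\zeta_1^0\in \mathcal{D}(A_1^{3/4+\varepsilon})$, $\zeta_2^0\in \mathcal{D}(A_1^{1/4+\varepsilon})$, $w^0\in \mathbf{H}^1(\mathcal{F})$ together with \eqref{bea8.7L} is precisely what is needed to put $z^0$ in the trace space of the maximal regularity class associated with $\mathcal{A}_{\eta_1^0}$. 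Uniqueness is immediate from the semigroup formulation, and pressure recovery in $L^2(0,\infty;H^1(\mathcal{F})/\mathbb{R})$ is obtained by solving the associated Neumann-type problem for $\nabla q$ using the $L^2$ regularity of $\partial_t w$ and the Stokes estimate.

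The principal obstacle is the resolvent estimate: the $W^{7,\infty}$ regularity of $\eta_1^0$ is essentially optimal to make the Stokes problem on $\mathcal{F}$ yield $\mathbf{H}^2$ estimates with constants independent of the shape, and to give sense to the trace term $\mathbb{H}_{\eta_1^0}(w,q)$ in $\mathcal{D}(A_1^{1/8})$. The analysis must carefully track how the geometry enters the constants, and use the gain of regularity coming from $A_1^{-1}$ applied to the coupling term to close the estimate on the beam side with exponent $\alpha>1/2$.
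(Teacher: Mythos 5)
Your high-level plan matches the paper's architecture: pass to the abstract Cauchy problem, prove a resolvent estimate on a right half-plane, invoke the Gevrey-class criterion, and then obtain maximal regularity from Theorem 5.1 of \cite{plat}. But as a proof the proposal has a real gap in exactly the place where the paper does its work, and a couple of structural inaccuracies.

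The central gap: you assert that ``variational estimates and elliptic regularity for the Stokes system on the $W^{7,\infty}$ domain $\mathcal{F}$, together with the smoothing of $A_1$, then yield the key Gevrey-type bound.'' This is precisely the step that fails if one tries to repeat the argument of \cite{plat} verbatim. In the flat case, the Neumann-to-Dirichlet-type operators $K_\lambda$, $G_\lambda$ (built from the Stokes solve in $\mathcal{F}_0$) essentially commute with fractional powers $A_1^{s}$ of the beam operator, because all three are Fourier multipliers in the periodic tangential variable. When $\eta_1^0$ is not flat, this commutation is lost, and one cannot decouple the coercive part of the reduced beam operator $V_\lambda=\lambda^2(I+K_\lambda)+\lambda G_\lambda+A_1$ from the $A_1^{1/4}$-damping needed to close the lower bound $\|V_\lambda^{-1}\|\lesssim |\lambda|^{-3/2}$. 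The paper's new ingredient is precisely a commutator estimate, Lemma \ref{2017Commutateur1}, bounding $\|[A_1^{3/8},K_\lambda]\eta\|$ in terms of $|\lambda|^{-1}\|A_1^{1/2+\varepsilon}\eta\|+|\lambda|^\varepsilon\|\eta\|$, obtained after pulling the Stokes problem back to the flat strip $\mathcal{F}_0$ and exploiting the structure of the transformed operators $\mathbb{L},\mathbb{G},\mathbb{D}$ (Section~\ref{sec_com}). This is then fed into the lower bound for the approximate operator $\widetilde V_\lambda=\lambda^2(I+K_\lambda)+2\rho\lambda A_1^{1/4}+A_1$ (Theorem~\ref{T03}) before transferring to $V_\lambda$ via the perturbation $S_\lambda=G_\lambda-2\rho A_1^{1/4}$ (Theorem~\ref{ThmEstV}). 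Nothing in your plan anticipates or replaces this mechanism; without it the resolvent estimate does not follow from elliptic regularity alone, and the $W^{7,\infty}$ hypothesis (which controls the smoothness of the commutator coefficients in $\mathcal{O}_\alpha^\beta$, not just the domain regularity) has no role in your argument.

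Two smaller inaccuracies. First, the phase space is not $\mathbf{L}^2_\sigma(\mathcal{F})\times\mathcal{D}(A_1^{1/2})\times\mathcal{H}_\mathcal{S}$ with $w$ replaced by $\mathbb{P}w$: the paper's $\mathcal{H}$ in \eqref{gev7.4} keeps the full velocity $w$ subject to the \emph{coupled} constraint $w\cdot n=(\Lambda\eta_2)\cdot n$ on $\partial\mathcal{F}$, and the relevant projector $P_0$ is the orthogonal projection onto this coupled space, not the Leray projection. Applying $\mathbb{P}$ alone would destroy the fluid--beam coupling in the generator. Second, you place $\lambda$ in a sector $|\arg\lambda|<\pi/2+\theta$; the semigroup here is of Gevrey class but explicitly \emph{not} analytic, so no such sectorial extension is available or needed. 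The paper works only on $\mathbb{C}^+=\{\Re\lambda\ge 0\}$ and establishes exactly $|\lambda|^{1/2}\|(\lambda I-A_0)^{-1}\|_{\mathcal{L}(\mathcal{H})}\le C$ there (Theorem~\ref{CorMain}); your exponent $1-\alpha$ with $\alpha\in(1/2,1)$ would give a strictly weaker rate than the one actually proved and used.
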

In \cite{plat}, we obtained \cref{T04} only in the case $\eta_1^0= 0$ so that the result on \eqref{tak2.3} was reduced to the case of small initial deformations.
Here we are no longer restricted to this hypothesis. As in \cite{plat}, the proof of \cref{T04} relies on resolvent estimates and results on semigroup of Gevrey class. More precisely, it is a consequence of \cref{CorMain}.

\begin{Remark}
As explained above, the main novelty here is to remove the restriction of smallness of $\eta_1^0$ that was needed in \cite{plat}. Our method to obtain the result for the linear system is based on commutator estimates (see \cref{sec_com}). The main drawback of such approach is that we need a more regular initial deformation ($W^{7,\infty}$ instead of $H^{3+\varepsilon}$). Even without this condition, we have as in our previous result a loss of regularity for 
$(\eta, \partial_t \eta)$: the continuity of $(\eta, \partial_t \eta)$ lies in $H^{5/2}(0,L)\times H^{1/2}(0,L)$ 
but we need to impose that at initial time, it belongs to $W^{7,\infty}(0,L)\times H^{1+\varepsilon}(0,L)$ for some $\varepsilon>0$. 
This is due to this model that couples two dynamical systems of different nature and in particular the linear system \eqref{gev0.3} couples
the Stokes system and the beam equation and the corresponding semigroup is not analytic but only of Gevrey class as stated in \cref{T04}. 

With an appropriate damping on the beam equation, we can recover an analytic semigroup. More precisely, in the original model 
proposed in \cite{QuaTuvVen2000a} (for the blood flow in a vessel), the beam equation in \eqref{tak2.3}  is replaced by
\begin{equation}\label{EqStructintro}
\partial_{tt} \eta + \alpha_1 \partial_{ssss} \eta-\alpha_2 \partial_{ss} \eta-\delta \partial_{tss} \eta
	=-\widetilde{\mathbb{H}}_{\eta}(v,p),
\end{equation}
with $\delta>0$. 

Several works analyze such a model: \cite{ChaDesEst2005a} (existence of weak solutions), \cite{Bei2004a}, \cite{Leq2011a} and \cite{MR3466847} (existence of strong solutions),
\cite{RaymondSICON2010} (stabilization of strong solutions), \cite{MR3619065} (stabilization of weak solutions around a stationary state). In all these works, 
the damping term $-\delta \partial_{tss} \eta$ is crucial. Few works have tackled  the case without damping: the existence of weak solutions is proved in \cite{Gra2008a}. In \cite{grandmont:hal-01567661}, 
the existence of local strong solutions is obtained 
for a structure described by either a wave equation ($\alpha_1=\delta=0$ and $\alpha_2>0$ in \eqref{EqStructintro}) 
or a beam equation with inertia of rotation ($\alpha_1>0$, $\alpha_2=\delta=0$ and with an additional term $-\partial_{ttss}\eta$ in \eqref{EqStructintro}). 
Finally, in our previous work \cite{plat} we proved the existence and uniqueness of strong solutions in the case  of an undamped beam equation but for small initial deformations.
\end{Remark}

The outline of the article is as follows: in \cref{sec_classical}, we construct and use a change of variables to write system \eqref{tak2.3} in a cylindrical domain and then linearize it. 
\cref{sec_op} is devoted to the introduction of several useful operators together with their properties. In order to prove \cref{T04} we need to estimate commutators appearing due the fact that our initial domain $\mathcal{F}$ is not flat. 
Such estimates allows us to deduce resolvent estimates in \cref{sec_res} by estimating the inverse of the operator ${V}_{\lambda}$  (see \eqref{EqresolventA0}). At first, we first estimate an approximation of ${V}_{\lambda}^{-1}$ 
in \cref{sec_approx}. Finally, in \cref{sec_fix} we recall the idea of the proof of \cref{T01} based on \cref{T04}, by using a fixed point argument.

\section{Change of variables and linearization}\label{sec_classical}
\subsection{The system written in a fixed domain}\label{sec_construc}
In this section, we defined and use a standard change of variables to rewrite system \eqref{tak2.3} in a cylindrical domain.
We set
\begin{equation}\label{gev9.7}
X_{\eta^1,\eta^2} : \mathcal{F}_{\eta^1} \to \mathcal{F}_{\eta^2}, 
\quad 
(y_1,y_2)\mapsto \left(y_1, y_2\frac{1+\eta^2(y_1)}{1+\eta^1(y_1)}\right),
\end{equation}
whose inverse is $X_{\eta^2,\eta^1}$.
In our case, we consider
\begin{gather}
X(t,\cdot)\ov X_{\eta_1^0,\eta(t)} : (y_1,y_2)\mapsto \left(y_1, y_2\frac{1+\eta(t,y_1)}{1+\eta_1^0(y_1)}\right), 
\label{bea3.4}
\\
Y(t,\cdot)\ov X(t,\cdot)^{-1}=X_{\eta(t),\eta_1^0} : (x_1,x_2)\mapsto \left(x_1, x_2\frac{{1+\eta_1^0(x_1)}}{1+\eta(t,x_1)}\right),
\label{bea3.5}
\end{gather}
so that $X(t,\cdot)$ transforms $\mathcal{F}=\mathcal{F}_{\eta^0}$ onto $\mathcal{F}_{\eta(t)}$.
Then, we write
\begin{equation}\label{gev8.2}
a\ov \Cof(\nabla Y)^*, \quad b\ov \Cof(\nabla X)^*,
\end{equation}
\begin{equation}\label{gev8.0bis}
w(t,y)\ov b(t,y) v(t,X (t,y))\quad \mbox{ and }\quad
q(t,y)\ov p(t,X (t,y)),
\end{equation}
so that
\begin{equation}\label{gev8.1bis}
v(t,x)=a(t,x) w(t,Y(t,x)) \quad \text{and} \quad 
p(t,x)=q(t,Y(t,x)).
\end{equation}
After some calculation (see for instance \cite{plat}), system \eqref{tak2.3}, \eqref{su4.3} rewrites,
\begin{equation}\label{tak2.3-CV2}
\left\{\begin{array}{c}
\partial_t w -\div \mathbb{T}(w,q) = \widehat{F}(\xi,w,q) \quad \text{in} \ (0,\infty)\times \mathcal{F},\\
\div w =   0    \quad \text{in} \ (0,\infty)\times \mathcal{F},\\
w(t,s,1) = (\partial_t \eta)(t,s) e_2  \quad t>0, \ s\in \mathcal{I},\\
w = 0  \quad t>0, \ y\in \Gamma_{-1},\\
\partial_{tt} \eta + A_1 \eta
	=-\mathbb{H}_{\eta_1^0}(w,q)+\widehat{G}_{\eta_1^0}(\xi,w), \quad t>0, \\
\end{array}\right.
\end{equation}
with the initial conditions
\begin{equation}\label{su4.3-CV-old}
\eta(0,\cdot)=\eta_1^0, \quad \partial_{t} \eta(0,\cdot)=\eta_2^0 \quad\mbox{ and }\quad  w(0, y)=w^0(y)\ov b(0,y) v^0(X(0,y))\  (y\in \mathcal{F}),
\end{equation}
where we have the following definitions:
\begin{multline}\label{bea4.2}
\widehat{F}_\alpha(\eta,w,q)
\ov
\nu\sum_{i,j,k} b_{\alpha i} \frac{\partial^2 a_{ik}}{\partial x_j^2}(X) w_k
+2\nu \sum_{i,j,k,\ell} b_{\alpha i} \frac{\partial a_{ik}}{\partial x_j}(X) \frac{\partial w_k}{\partial y_\ell} \frac{\partial Y_\ell}{\partial x_j}(X)
\\
+\nu \sum_{j,\ell,m}  \frac{\partial^2 w_\alpha}{\partial y_\ell\partial y_m} 
\left(\frac{\partial Y_\ell}{\partial x_j}(X)\frac{\partial Y_m}{\partial x_j}(X) -\delta_{\ell,j}\delta_{m,j}\right)
+\nu \sum_{j,\ell} \frac{\partial w_\alpha}{\partial y_\ell} \frac{\partial^2 Y_\ell}{\partial x_j^2}(X)
\\
-\sum_{k,i} \frac{\partial q}{\partial y_k}
 \left(\det(\nabla X)  \frac{\partial Y_\alpha}{\partial x_i}(X) \frac{\partial Y_k}{\partial x_i}(X)-\delta_{\alpha,i}\delta_{k,i}\right)
\\
-\sum_{i,j,k,m} b_{\alpha i} \frac{\partial a_{ik}}{\partial x_j}(X) a_{j m}(X) w_k w_m
-\frac{1}{\det(\nabla X)}  \left[(w\cdot \nabla) w\right]_\alpha
\\
-\left[b(\partial_t a)(X) w\right]_\alpha
-\left[(\nabla w)(\partial_tY)(X)\right]_\alpha,
\end{multline}
\begin{multline}\label{bea4.3}
\widehat{G}_{\eta_1^0}(\eta,w)(t,s)
=\nu M\Bigg\{ 
2\sum_{k,\ell} \left[\delta_{2,k}\delta_{2,\ell}-a_{2k}(X)\frac{\partial Y_\ell}{\partial x_2}(X)\right] \frac{\partial w_k}{\partial y_\ell} +
\partial_s (\eta-\eta_1^0)\left(\frac{\partial w_2}{\partial y_1} +\frac{\partial w_1}{\partial y_2}\right)
\\
+\partial_s \eta
\left[
  \sum_{k,\ell}  \left(a_{2k}(X)\frac{\partial Y_\ell}{\partial x_1}(X)- \delta_{2,k}\delta_{1,\ell }\right) \frac{\partial w_k}{\partial y_\ell} 
+
   \sum_{k,\ell}  \left(a_{1k}(X)\frac{\partial Y_\ell}{\partial x_2}(X)-\delta_{1,k}\delta_{2,\ell} \right)\frac{\partial w_k}{\partial y_\ell} 
\right]
\\
+ \sum_k \left(\partial_s \eta\left[\frac{\partial a_{2k}}{\partial x_1}(X)+\frac{\partial a_{1k}}{\partial x_2}(X)\right]-2\frac{\partial a_{2k}}{\partial x_2}(X) \right) w_k
\Bigg\}(t,s,1+\eta_1^0(s)).
\end{multline}
Moreover, we recall that
\begin{equation}\label{bea8.0}
\mathbb{H}_{\eta_1^0}(w,q)(t,s)
=
M\left\{ (1+|\partial_s \eta_1^0|^2)^{1/2} \left[\mathbb{T}(v,p)n\right](t,s,1+\eta_1^0(s))\cdot e_2 \right\}.
\end{equation}

\subsection{The linear system}
From the previous section, and in particular from system \eqref{tak2.3-CV2}-\eqref{su4.3-CV-old}, we are led to consider the linear system \eqref{gev0.3}-\eqref{bis} written in the fixed domain
$\mathcal{F}$ (defined by \eqref{F0}). We introduce the notation 
\begin{equation}\label{gev2.4bis}
\mathbb{C}^+\ov \left\{\lambda\in \mathbb{C} \ ; \ \Re(\lambda)\geq 0 \right\}. 
\end{equation}
\begin{equation}\label{gev2.4}
\mathbb{C}_{\alpha}^+\ov \left\{\lambda\in \mathbb{C}^+ \ ; \  |\lambda|>\alpha\right\}. 
\end{equation}

Let us consider the following functional spaces
\begin{equation}\label{gev9.2-bis}
\mathbf{V}^\theta(\mathcal{F}) \ov  \left\{f \in \mathbf{H}^\theta(\mathcal{F}) \ ; \ \div f=0 \right\},
\end{equation}
\begin{equation}\label{gev9.2}
\mathbf{V}^\theta_{n}(\mathcal{F}) \ov  
\left\{f \in \mathbf{H}^\theta(\mathcal{F}) \ ; \ \div f=0, \quad f \cdot n =0 \quad \text{on} \ \partial\mathcal{F} \right\}
\quad (\theta\in [0,1/2)),
\end{equation}
\begin{equation}\label{gev9.3}
\mathbf{V}^\theta_{n}(\mathcal{F}) \ov  
\left\{f \in \mathbf{H}^\theta(\mathcal{F}) \ ; \ \div f=0, \quad  f =0 \quad \text{on} \ \partial\mathcal{F} \right\}
\quad (\theta\in (1/2,1]),
\end{equation}
\begin{equation}\label{gev3.2bis}
\mathbf{V}^\theta(\partial \mathcal{F}) \ov  
\left\{f \in \mathbf{H}^\theta(\partial \mathcal{F}) \ ; \ 
\int_{\partial \mathcal{F}} f \cdot n \ d\gamma=0  \right\}
\quad (\theta\geq 0).
\end{equation}

We introduce the operator $\Lambda : L^2(\mathcal{I}) \to {\bf L}^2(\partial\mathcal{F})$ defined by
\begin{equation}\label{ws9.1}
\begin{split}
(\Lambda \eta)(y) &= \left(M \eta(s)\right) e_2\quad \mbox{if} \quad  y=(s,1+\eta_1^0(s))\in \Gamma_{\eta_1^0},\\
 (\Lambda \eta)(y)&=0\quad\quad\quad\, \mbox{if} \quad y\in \Gamma_{-1}.
 \end{split}
\end{equation}
The adjoint $\Lambda^*: {\bf L}^2(\partial \mathcal{F})\to L^2(\mathcal{I})$ of $ \Lambda$ is given by
\begin{equation}\label{ws9.3}
(\Lambda^* v)(s)=M\left((1+|\partial_s \eta_1^0(s)|^2)^{1/2} v(s, 1+\eta_1^0(s))\cdot e_2\right).
\end{equation}
Since $\eta_1^0\in W^{7,\infty}(\mathcal{I})$, then for any $\theta\in [0,4]$,
\begin{equation}\label{bea0.0}
\Lambda(H^\theta(\mathcal{I}))\subset {\bf V}^\theta(\partial \mathcal{F})
\end{equation}
and
\begin{equation}\label{gev9.5}
\Lambda^*({\bf H}^\theta(\partial \mathcal{F}))\subset \mathcal{D}(A_1^{\theta/4}).
\end{equation}
In particular
\begin{equation}\label{gev9.6}
\|\Lambda\eta\|_{\mathbf{H}^\theta(\partial \mathcal{F})} \geq c(\theta) \|A_1^{\theta/4} \eta\|_{\mathcal{H}_{\mathcal{S}}} \quad (\eta \in \mathcal{D}(A_1^{\theta/4})).
\end{equation}

We can also define the Stokes operator 
\begin{equation}\label{gev9.4}
\mathcal{D}(\mathbb{A})\ov \mathbf{V}^1_{n}(\mathcal{F})\cap \mathbf{H}^2(\mathcal{F}),
\quad
\mathbb{A}\ov \mathbb{P}\Delta : \mathcal{D}(\mathbb{A}) \to \mathbf{V}^0_{n}(\mathcal{F}),
\end{equation}
where $\mathbb{P} : \mathbf{L}^2(\mathcal{F}) \to \mathbf{V}^0_{n}(\mathcal{F})$ is the Leray projection operator.

We consider the space ${\bf L}^2(\mathcal{F})\times \mathcal{D}(A_1^{1/2}) \times \mathcal{H}_{\mathcal{S}} $ equipped with the scalar product:
$$
\left\langle 
\left[ w^{(1)}, \eta_1^{(1)}, \eta_2^{(1)}  \right], 
\left[ w^{(2)}, \eta_1^{(2)}, \eta_2^{(2)}  \right]
\right\rangle 
=\int_{\mathcal{F}} w^{(1)} \cdot w^{(2)} \ dy + \Big( A_1^{1/2}\eta_{1}^{(1)} , A_1^{1/2} \eta_{1}^{(2)} \Big)_{\mathcal{H}_{\mathcal{S}}}
+ \Big( \eta_{2}^{(1)} , \eta_{2}^{(2)} \Big)_{\mathcal{H}_{\mathcal{S}}},
$$  
and we introduce the following spaces:
\begin{equation}\label{gev7.4}
\mathcal{H} \ov \left\{\left[w,\eta_1,\eta_2\right]\in {\bf L}^2(\mathcal{F})\times \mathcal{D}(A_1^{1/2}) \times  \mathcal{H}_{\mathcal{S}} 
\ ;\ w\cdot n = (\Lambda \eta_2)\cdot n  \ \text{on}
\ \partial\mathcal{F},\ \div\, w =0 \; \text{in} \ \mathcal{F} \right\},
\end{equation}
$$
\mathcal{V} \ov \left\{\left[w,\eta_1,\eta_2\right]\in {\bf H}^1(\mathcal{F})\times \mathcal{D}(A_1^{3/4}) \times  \mathcal{D}(A_1^{1/4}) 
 \ ; \ w= \Lambda \eta_2  \ \text{on} \ \partial\mathcal{F}, 
\ \div\, w =0 \; \text{in} \ \mathcal{F}\right\}.
$$
We denote by $P_0$ the orthogonal projection from ${\bf L}^2(\mathcal{F})\times \mathcal{D}(A_1^{1/2}) \times  \mathcal{H}_{\mathcal{S}}$ onto $\mathcal{H}$. 
We have the following regularity result on $P_0$ (see \cite{plat}): 
\begin{Lemma}
For any $\theta\in [0,1]$,
\begin{equation}
P_0 \in \mathcal{L}({\bf H}^\theta(\mathcal{F})\times \mathcal{D}(A_1^{1/2+\theta/4})\times \mathcal{D}(A_1^{\theta/4})),\label{RegTildeP}
\end{equation}
and 
\begin{equation}
P_0 \in \mathcal{L}({\bf L}^2(\mathcal{F})\times \mathcal{D}(A_1^{3/8})\times \mathcal{D}(A_1^{1/8})').\label{RegTildeP-0}
\end{equation}
\end{Lemma}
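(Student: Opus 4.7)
The plan is to characterize $P_0$ explicitly through the solution of an elliptic boundary value problem and then to invoke elliptic regularity. Testing the orthogonality condition defining $\mathcal{H}^\perp$ against elements $[0, \eta_1', 0] \in \mathcal{H}$ with arbitrary $\eta_1' \in \mathcal{D}(A_1^{1/2})$ shows that $P_0$ acts as the identity on the second coordinate. Testing next against $[w', 0, 0]$ with $w' \in \mathbf{V}^0_n(\mathcal{F})$ and then against coupled pairs $[w', 0, \eta_2']$ satisfying $w' \cdot n = (\Lambda \eta_2') \cdot n$ on $\partial \mathcal{F}$, followed by an integration by parts and the identity $(e_2 \cdot n)\sqrt{1 + |\partial_s \eta_1^0|^2} = 1$ on $\Gamma_{\eta_1^0}$, one obtains
\begin{equation*}
\mathcal{H}^\perp = \bigl\{[\nabla p,\, 0,\, -M(p|_{\Gamma_{\eta_1^0}})] : p \in H^1(\mathcal{F})/\mathbb{R}\bigr\}.
\end{equation*}
Consequently $P_0[w, \eta_1, \eta_2] = [w - \nabla p,\, \eta_1,\, \eta_2 + M(p|_{\Gamma_{\eta_1^0}})]$, where $p \in H^1(\mathcal{F})/\mathbb{R}$ is determined uniquely by
\begin{equation*}
\Delta p = \operatorname{div} w \ \text{in} \ \mathcal{F}, \qquad \partial_n p = w \cdot n \ \text{on} \ \Gamma_{-1},
\end{equation*}
\begin{equation*}
\partial_n p + (e_2 \cdot n) M(p|_{\Gamma_{\eta_1^0}}) = w \cdot n - (e_2 \cdot n)\eta_2 \ \text{on} \ \Gamma_{\eta_1^0}.
\end{equation*}

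Well-posedness of this problem relies on the bilinear form
$a(p,\phi) = \int_\mathcal{F} \nabla p \cdot \nabla \phi\, dy + \int_{\Gamma_{\eta_1^0}} (e_2 \cdot n) M(p|_{\Gamma_{\eta_1^0}})\, \phi\, d\gamma,$
which satisfies $a(p,p) = \|\nabla p\|_{\mathbf{L}^2}^2 + \|M(p|_{\Gamma_{\eta_1^0}})\|_{L^2(\mathcal{I})}^2$ and is therefore coercive on $H^1(\mathcal{F})/\mathbb{R}$. When $w \in \mathbf{L}^2(\mathcal{F})$ and $\eta_2 \in \mathcal{D}(A_1^{1/8})'$, the corresponding linear form is continuous on $H^1(\mathcal{F})$, and Lax--Milgram delivers a unique $p \in H^1(\mathcal{F})/\mathbb{R}$ with $\|p\|_{H^1/\mathbb{R}} \leq C(\|w\|_{\mathbf{L}^2} + \|\eta_2\|_{\mathcal{D}(A_1^{1/8})'})$. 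Combined with the trace theorem and \eqref{gev9.5} applied to $M(p|_{\Gamma_{\eta_1^0}})$, this yields \eqref{RegTildeP-0}. To obtain \eqref{RegTildeP}, I would bootstrap by treating the nonlocal lower-order term $(e_2 \cdot n) M(p|_{\Gamma_{\eta_1^0}})$ as part of the data in a standard Neumann problem for $\Delta p$. The $W^{7,\infty}$ smoothness of $\eta_1^0$ renders $\Gamma_{\eta_1^0}$ regular enough to run elliptic regularity up to $\theta = 1$, giving
\begin{equation*}
\|p\|_{H^{\theta+1}(\mathcal{F})/\mathbb{R}} \leq C\bigl(\|w\|_{\mathbf{H}^\theta(\mathcal{F})} + \|\eta_2\|_{\mathcal{D}(A_1^{\theta/4})}\bigr),
\end{equation*}
from which \eqref{RegTildeP} follows by estimating $w - \nabla p$ in $\mathbf{H}^\theta(\mathcal{F})$ and $M(p|_{\Gamma_{\eta_1^0}})$ in $\mathcal{D}(A_1^{\theta/4})$ via the trace theorem and \eqref{gev9.5}.

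The main obstacle is the elliptic regularity step and its closure at $\theta = 1$. In the flat case $\eta_1^0 = 0$ treated in \cite{plat}, the factor $e_2 \cdot n \equiv 1$ and the geometry is rectangular, so the argument goes through directly. In the non-flat setting one must carry along the smooth but nontrivial coefficient $(e_2 \cdot n) = (1+|\partial_s \eta_1^0|^2)^{-1/2}$ and control the nonlocal trace contribution through an iterative estimate, for which the enhanced $W^{7,\infty}$ regularity of $\eta_1^0$ is amply sufficient to propagate regularity.
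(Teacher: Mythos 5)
Your proof is correct and takes the standard route: the paper does not prove this lemma itself but cites \cite{plat}, and your argument --- explicit identification of $\mathcal{H}^\perp$ via the identity $(e_2\cdot n)\sqrt{1+|\partial_s\eta_1^0|^2}=1$ on $\Gamma_{\eta_1^0}$, reduction of $P_0$ to a mixed Neumann problem for the Laplacian with the nonlocal term $(e_2\cdot n)M(p|_{\Gamma_{\eta_1^0}})$, then Lax--Milgram for \eqref{RegTildeP-0} and elliptic regularity (plus interpolation) for \eqref{RegTildeP} --- is precisely the argument that underlies that reference, adapted to the non-flat boundary. The only minor imprecision is the invocation of \eqref{gev9.5} to bound $M(p|_{\Gamma_{\eta_1^0}})$: the relevant estimate is the composition of the trace theorem with the boundedness of $f\mapsto M(\psi f)$ for the smooth weight $\psi$, which is the same phenomenon but not literally \eqref{gev9.5}.
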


We now define the linear operator 
$A_0 : \mathcal{D}(A_0)\subset \mathcal{H}\to \mathcal{H}$:
\begin{equation}\label{fs3.4}
\mathcal{D}(A_0)\ov  \mathcal{V} \cap  \Big [{\bf H}^2(\mathcal{F})\times \mathcal{D}(A_1)\times \mathcal{D}(A_1^{1/2})\Big],
\end{equation}
and for $\begin{bmatrix} w,\eta_1,\eta_2 \end{bmatrix}\in \mathcal{D}(A_0)$, we set
\begin{equation}\label{fs3.2}
\widetilde{A}_0\begin{bmatrix} w \\ \eta_1 \\ \eta_2\end{bmatrix} \ov
\begin{bmatrix}
 \Delta w \medskip\\ \medskip
\eta_2 \displaystyle \\ \medskip
 \displaystyle -A_1\eta_{1}-\Lambda^*(2 D(w)n)
\end{bmatrix}
\end{equation}
and
\begin{equation}\label{fs3.3}
A_0\ov P_0  \widetilde{A}_0.
\end{equation}

By using the above operators, we can rewrite the linear system \eqref{gev0.3}, \label{gev0.3-bis}
as follows
\begin{equation}\label{gev0.3-ter}
\frac{d}{dt} \begin{bmatrix} w\\ \eta \\ \partial_t \eta \end{bmatrix} 
= A_0 \begin{bmatrix} w \\ \eta \\ \partial_t \eta \end{bmatrix} + P_0 \begin{bmatrix} F\\ 0\\ G \end{bmatrix},
\quad
\begin{bmatrix} w\\ \eta \\ \partial_t \eta \end{bmatrix}(0)=\begin{bmatrix} w^0 \\ \eta_1^0 \\ \eta_2^0 \end{bmatrix}.
\end{equation}

We also recall the following result (see \cite[Proposition 3.4, Proposition 3.5 and Remark 3.6]{MR3619065}).
\begin{Proposition}\label{AStrongContSG}
The operator $A_0$ defined by \eqref{fs3.4}--\eqref{fs3.3} has compact resolvents, 
it is the infinitesimal generator of a strongly continuous semigroup of contractions on $\mathcal{H}$ and it is exponentially stable on $\mathcal{H}$.
\end{Proposition}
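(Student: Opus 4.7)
My approach is to apply the Lumer--Phillips theorem to get the contraction semigroup, deduce compactness of the resolvent from Sobolev embeddings, and then obtain exponential stability via a spectral argument combined with the Gearhart--Pr\"uss criterion.

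\emph{Dissipativity and maximality.} For $z=[w,\eta_1,\eta_2]\in \mathcal{D}(A_0)\subset \mathcal{H}$, I would use that $P_0$ is an orthogonal projection onto $\mathcal{H}$, so $\langle A_0 z,z\rangle_\mathcal{H}=\langle \widetilde{A}_0 z,z\rangle_\mathcal{H}$. Writing $\Delta w = 2\div D(w)$ (which is valid since $\div w=0$), integrating by parts, and using the boundary condition $w=\Lambda \eta_2$ on $\partial \mathcal{F}$ together with the definition \eqref{ws9.3} of $\Lambda^*$, the two boundary terms involving $2D(w)n$ cancel, while the pairing $(A_1^{1/2}\eta_2,A_1^{1/2}\eta_1)_{\mathcal{H}_\mathcal{S}}-(A_1\eta_1,\eta_2)_{\mathcal{H}_\mathcal{S}}$ vanishes by self-adjointness of $A_1$. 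This yields $\Re \langle A_0 z,z\rangle_\mathcal{H}=-2\|D(w)\|^2_{\mathbf{L}^2(\mathcal{F})}\le 0$, so $A_0$ is dissipative. For surjectivity of $\lambda\,\Id - A_0$ for some $\lambda>0$, I would reduce the resolvent equation to a stationary coupled Stokes--beam system: eliminating $\eta_2=\lambda \eta_1 - g_1$, I would set up a variational formulation on $\mathcal{V}$ and apply Lax--Milgram; coercivity follows from Korn's inequality and the dissipation identity, and elliptic regularity for the Stokes operator and for $A_1$ places the solution in $\mathcal{D}(A_0)$. Lumer--Phillips then delivers the strongly continuous semigroup of contractions.

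\emph{Compact resolvent and exponential stability.} Compactness follows from the embedding $\mathcal{D}(A_0)\hookrightarrow \mathbf{H}^2(\mathcal{F})\times \mathcal{D}(A_1)\times \mathcal{D}(A_1^{1/2})$, which is compact into $\mathbf{L}^2(\mathcal{F})\times \mathcal{D}(A_1^{1/2})\times \mathcal{H}_\mathcal{S}=\mathcal{H}$. For exponential stability, the spectrum is discrete, and I would first rule out eigenvalues on $i\mathbb{R}$: if $A_0 z = i\beta z$ with $z\ne 0$, the dissipativity identity gives $D(w)=0$, so $w$ is an affine field; combined with $\div w=0$ and $w=0$ on $\Gamma_{-1}$, this forces $w\equiv 0$; then the trace condition $\Lambda \eta_2=0$ on $\partial \mathcal{F}$ together with $\eta_2\in L^2_0(\mathcal{I})$ (so $M\eta_2=\eta_2$) yields $\eta_2=0$; finally $A_1\eta_1=i\beta\eta_2=0$ forces $\eta_1=0$, a contradiction. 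To upgrade ``no imaginary spectrum'' to exponential decay, I would invoke the Gearhart--Pr\"uss theorem: on bounded intervals of $i\mathbb{R}$ the resolvent is bounded by continuity, and for $|\beta|$ large a uniform bound is obtained by a direct energy estimate on the resolvent equation---one tests the Stokes block against $w$ and the beam block against $\lambda\eta_1$, exploiting that $\nu\|D(w)\|^2$ controls $\|w\|_{\mathbf{H}^1}$ and that the boundary trace is absorbed via \eqref{gev9.6}.

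\emph{Main obstacle.} The delicate point is the uniform resolvent bound on the imaginary axis, because the beam equation is itself conservative: all dissipation comes from the fluid viscosity and must be transmitted to the structure through the boundary coupling $\Lambda,\Lambda^*$. Making this transfer quantitative uniformly in $\beta$ is precisely the spectral analysis that the subsequent sections of the paper carry out in a sharper (Gevrey) form; here the weaker statement of exponential stability suffices and can be read off from those estimates.
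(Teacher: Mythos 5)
The paper does not prove this proposition: it simply cites \cite[Propositions 3.4, 3.5, Remark 3.6]{MR3619065}, so there is no in-paper argument to compare yours against. That said, your roadmap — Lumer--Phillips for the contraction semigroup, compactness of the resolvent from the embedding $\mathcal{D}(A_0)\hookrightarrow\mathcal{H}$, and exponential stability via absence of imaginary spectrum plus Gearhart--Pr\"uss — is exactly the standard approach for such fluid--beam generators, and the steps you spell out are correct. The energy identity computation is right: since $z\in\mathcal{H}$, $\langle P_0\widetilde{A}_0 z,z\rangle=\langle\widetilde{A}_0 z,z\rangle$, the boundary term from integrating $2\div D(w)\cdot\bar w$ by parts cancels against $-(\Lambda^*(2D(w)n),\eta_2)$ through $w=\Lambda\eta_2$ and \eqref{ws9.3}, and the $A_1$-pairings cancel in real part by self-adjointness. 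One small remark on the no-imaginary-eigenvalue step: in the periodic geometry $\mathcal{I}=\mathbb{R}/L\mathbb{Z}$, $D(w)=0$ actually forces $w$ to be a \emph{constant} (the rotational part is excluded by periodicity), and $w=0$ on $\Gamma_{-1}$ then kills it; your ``affine field'' phrasing works but is slightly stronger than needed. For $\beta=0$, the conclusion $A_1\eta_1=0$ requires testing $\widetilde{A}_0 z\in\mathcal{H}^\perp$ against a lift of $\Lambda\eta_2'$ rather than reading off the beam equation directly, but the conclusion is the same.

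The one genuine gap is the uniform resolvent bound $\sup_{\beta\in\mathbb{R}}\|(i\beta-A_0)^{-1}\|_{\mathcal{L}(\mathcal{H})}<\infty$ required by Gearhart--Pr\"uss, which you flag as ``the delicate point'' and sketch by testing the Stokes block against $w$ and the beam block against $\lambda\eta_1$. This is indeed where the real content of exponential stability lies — the beam itself is conservative and all dissipation must be transmitted through the trace coupling — and a one-line multiplier argument as stated does not obviously close: one must show uniformly in $|\beta|$ that $\|D(w)\|_{L^2}$ controls the full norm of $z$, and the naive estimate degrades for large $|\beta|$ unless one iterates through the beam equation more carefully. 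Since the paper itself does not supply this proof (deferring to \cite{MR3619065}), the honest verdict is that your outline is consistent with the cited approach and correct as far as it goes, but the crucial quantitative estimate is asserted rather than established. Note also that the later resolvent estimate \eqref{ResEstA0-1} in \cref{CorMain} would trivially give the uniform bound on $i\mathbb{R}$, but its proof uses the exponential stability of \cref{AStrongContSG} for $|\lambda|\le\alpha$, so you cannot invoke it here without circularity.
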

We have also the following result (see \cite[Proposition 3.8]{MR3619065}).
\begin{Proposition}\label{PropA}  
For $\theta\in [0,1]$, the following equalities hold
\begin{equation}
\mathcal{D}((-A_0)^\theta)= \left[{\bf H}^{2\theta}(\mathcal{F})\times \mathcal{D}(A_1^{1/2+\theta/2})\times \mathcal{D}(A_1^{\theta/2})\right]\cap \mathcal{H} \quad \mbox{ if }\;\theta\in \left(0,1/4\right), \label{Domainfraceps1}
\end{equation}
\begin{multline}
\mathcal{D}((-A_0)^\theta)=\left\{[w,\eta_1,\eta_2]\in \left[{\bf H}^{2\theta}(\mathcal{F})\times \mathcal{D}(A_1^{1/2+\theta/2})\times \mathcal{D}(A_1^{\theta/2})\right]\cap \mathcal{H}  \,;\,  
w= \Lambda \eta_2 \ \text{on} \ \partial \mathcal{F} \right\}
\\ 
\mbox{ if }\;\theta\in (1/4, 1].\label{Domainfraceps2}
\end{multline}
\end{Proposition}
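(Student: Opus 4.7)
The plan is to identify $\mathcal{D}((-A_0)^\theta)$ by complex interpolation between $\mathcal{H} = \mathcal{D}((-A_0)^0)$ and $\mathcal{D}(A_0)$ given by~\eqref{fs3.4}, after establishing that $-A_0$ admits bounded imaginary powers (BIP). One then has $\mathcal{D}((-A_0)^\theta) = [\mathcal{H}, \mathcal{D}(A_0)]_\theta$ for $\theta \in (0,1)$, and the dichotomy at $\theta = 1/4$ should emerge from the trace theorem: the boundary condition $w = \Lambda \eta_2$ on $\partial \mathcal{F}$ that is built into $\mathcal{D}(A_0)$ survives interpolation only when $\mathbf{H}^{2\theta}(\mathcal{F})$ admits traces on $\partial \mathcal{F}$, namely for $2\theta > 1/2$.

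The first step is to compute $A_0^*$ explicitly by integrating by parts against the scalar product on $\mathcal{H}$ and using $\Lambda^*$ from~\eqref{ws9.3}. The skew part of $A_0$ should come entirely from the coupling between $\eta_2$ and $\Lambda^*(2D(w)n)$, while the symmetric part $S \ov -\frac{1}{2}(A_0 + A_0^*)$ is a positive self-adjoint operator assembled from the Stokes operator $\mathbb{A}$ and the elastic operator $A_1$. Since $S$ is self-adjoint positive (hence has BIP) and the skew coupling is relatively bounded with respect to $S^{1/2}$ on $\mathcal{H}$, one gets BIP for $-A_0$ by a Dore--Venni / Kato--Yagi type perturbation argument.

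The second step is to compute $[\mathcal{H}, \mathcal{D}(A_0)]_\theta$ componentwise, invoking~\eqref{RegTildeP} for the projection $P_0$ and~\eqref{bea0.0}--\eqref{gev9.5} for $\Lambda$. Interpolation of the three scalar scales yields $\mathbf{H}^{2\theta}(\mathcal{F}) \times \mathcal{D}(A_1^{1/2+\theta/2}) \times \mathcal{D}(A_1^{\theta/2})$. The constraints $\div w = 0$ in $\mathcal{F}$ and $w \cdot n = (\Lambda \eta_2)\cdot n$ on $\partial \mathcal{F}$ are already encoded in $\mathcal{H}$ and are therefore preserved for every $\theta$, whereas the full Dirichlet condition $w = \Lambda \eta_2$ present in $\mathcal{D}(A_0)$ only persists in the interpolation space when $\mathbf{H}^{2\theta}(\mathcal{F})$ admits a well-defined trace on $\partial \mathcal{F}$, hence exactly for $\theta > 1/4$. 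This accounts for the case split between~\eqref{Domainfraceps1} and~\eqref{Domainfraceps2}.

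The main obstacle will be the verification of BIP: $A_0$ is not self-adjoint, not sectorial of angle strictly less than $\pi/2$ (its semigroup is only of Gevrey class, not analytic), and the $\Lambda$-coupling is supported on the non-flat boundary $\partial \mathcal{F}$. One must carefully check that the skew perturbation of the positive self-adjoint part $S$ preserves BIP within the coupled space $\mathcal{H}$, and this requires quantitative control of $\Lambda$, $\Lambda^*$ and $P_0$ on the relevant interpolation scales. If BIP turns out to be inaccessible by this direct route, a fallback is to verify each inclusion in~\eqref{Domainfraceps1}--\eqref{Domainfraceps2} and its reverse by explicit truncation/extension arguments on the Sobolev scale, using the resolvent representation of $(-A_0)^\theta$ together with the regularity of $P_0$ and $\Lambda^*$; this is more computational but self-contained.
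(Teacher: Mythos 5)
This proposition is not proved in the paper: it is cited verbatim from \cite{MR3619065} (Proposition~3.8 there), so there is no in-paper argument to compare against. Your overall framework --- identify $\mathcal{D}((-A_0)^\theta)$ with the complex interpolation space $[\mathcal{H},\mathcal{D}(A_0)]_\theta$, and explain the dichotomy at $\theta=1/4$ through the trace threshold for $\mathbf{H}^{2\theta}(\mathcal{F})$ --- is the right structural picture, and identifying \eqref{RegTildeP} as the key to handling the constrained product is a good instinct. But two links in your chain are not sound as sketched.

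First, the route to BIP through the decomposition $A_0=-S+J$ ($S$ symmetric nonnegative, $J$ skew) followed by a Dore--Venni or Kato--Yagi perturbation cannot work. Computing $\Re\langle A_0 U,U\rangle$ with the $\mathcal{H}$-inner product, the symmetric part is $-2\nu\int_{\mathcal{F}}|D(w)|^2$: it is supported entirely on the fluid component and vanishes identically on $(\eta_1,\eta_2)$. The skew part is exactly the beam wave operator $(\eta_1,\eta_2)\mapsto(\eta_2,-A_1\eta_1)$ plus the boundary coupling, which is \emph{not} relatively bounded, let alone subordinate, with respect to $S^{1/2}$; it is unbounded in a direction on which $S$ is zero. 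So the hypotheses of those perturbation theorems are simply not met. What does work, and is both cleaner and stronger, is to use that $-A_0$ is m-accretive with $0\in\rho(A_0)$ (\cref{AStrongContSG}): an m-accretive operator on a Hilbert space admits a bounded $H^\infty$ functional calculus on the right half-plane (via the Sz.-Nagy unitary dilation of the contraction semigroup and von Neumann's inequality), hence has bounded imaginary powers with angle $\le \pi/2$. This gives $\mathcal{D}((-A_0)^\theta)=[\mathcal{H},\mathcal{D}(A_0)]_\theta$ for free, with no decomposition needed.

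Second, the phrase ``compute $[\mathcal{H},\mathcal{D}(A_0)]_\theta$ componentwise'' hides the real work. $\mathcal{H}$ and $\mathcal{D}(A_0)$ are not products but constrained subspaces of ${\bf H}^{2\theta}(\mathcal{F})\times\mathcal{D}(A_1^{1/2+\theta/2})\times\mathcal{D}(A_1^{\theta/2})$, and interpolation does not pass to closed subspaces unless you exhibit a retraction--coretraction pair that is simultaneously bounded on the endpoint spaces. Concretely you need (i) a projection (which $P_0$ supplies, thanks to \eqref{RegTildeP}) and (ii) for $\theta>1/4$ a bounded lifting that, given $\eta_2$, produces a divergence-free field in ${\bf H}^{2\theta}(\mathcal{F})$ with trace $\Lambda\eta_2$ on $\partial\mathcal{F}$, so that the full Dirichlet constraint can be inserted and removed across the interpolation. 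Without constructing this lifting and verifying it is uniformly bounded in the relevant Sobolev scale (which is exactly where \eqref{bea0.0}--\eqref{gev9.5} and the $W^{7,\infty}$ regularity of $\eta_1^0$ are used), the ``componentwise'' claim is unjustified, and in particular the clean break at $\theta=1/4$ versus $\theta=1/4$ being excluded entirely is not established. Your ``fallback'' of proving both inclusions directly is in fact closer to what a full proof must do than the shortcut you describe as the main route.
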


One of the main goals of this article is to show the following result:
\begin{Theorem}\label{CorMain}
There exists $C>0$ such that for all $\lambda \in \mathbb{C}^+$
\begin{equation}\label{ResEstA0-1}
|\lambda|^{1/2} \left\| (\lambda I-A_0)^{-1}\right\|_{\mathcal{L}(\mathcal{H})}
\leq C.
\end{equation}
Moreover, there exists a constant $C>0$ such that for all $\lambda \in \mathbb{C}^+$ 
\begin{multline}\label{ResEstA0-3}
\left\| (\lambda I-A_0)^{-1}
z
\right\|_{{\bf H}^2(\mathcal{F})\times \mathcal{D}(A_1^{7/8})\times \mathcal{D}(A_1^{3/8})}
+|\lambda|\left\| (\lambda I-A_0)^{-1}
z
\right\|_{{\bf L}^2(\mathcal{F})\times \mathcal{D}(A_1^{3/8})\times \mathcal{D}(A_1^{1/8})'}
\\
\leq C\left\|
z%
\right\|_{{\bf L}^2(\mathcal{F})\times \mathcal{D}(A_1^{5/8})\times \mathcal{D}(A_1^{1/8})}
\\
\left(
z
\in \mathcal{H}\cap \left({\bf L}^2(\mathcal{F})\times \mathcal{D}(A_1^{5/8})\times \mathcal{D}(A_1^{1/8})\right)\right).
\end{multline}
\end{Theorem}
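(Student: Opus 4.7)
\medskip

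\noindent\textbf{Proof proposal.} The plan is to reduce the resolvent equation $(\lambda I-A_0)z=f$ to the inversion of an operator ${V}_{\lambda}$ acting only on the structure variable $\eta_1$, and then to establish the required bounds on ${V}_{\lambda}^{-1}$ by a perturbation argument from a ``frozen'' (flat) model. Concretely, given $z=[w,\eta_1,\eta_2]\in \mathcal{D}(A_0)$ satisfying $(\lambda I-A_0)z=f$ with $f=[f_1,f_2,f_3]\in\mathcal{H}$, one uses \eqref{fs3.4}--\eqref{fs3.3} together with \eqref{gev7.4} to write, in non-projected form,
\begin{equation*}
\lambda w-\Delta w+\nabla q=f_1,\qquad \div w=0,\qquad w=\Lambda\eta_2\;\text{on}\;\partial\mathcal{F},
\end{equation*}
coupled with $\lambda\eta_1-\eta_2=f_2$ and $\lambda\eta_2+A_1\eta_1+\Lambda^{*}(2D(w)n)=f_3$. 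Solving the Stokes problem with boundary data $\Lambda\eta_2=\Lambda(\lambda\eta_1-f_2)$ and source $f_1$ yields $(w,q)$ as a linear (and explicit) function of $\eta_1$ with a $\lambda$-dependent operator; inserting the expression for $\Lambda^{*}(2D(w)n)$ into the last equation gives an equation of the form
\begin{equation*}
{V}_{\lambda}\eta_1 \;=\; \Psi_{\lambda}(f),
\end{equation*}
where ${V}_{\lambda}=\lambda^{2}I+A_1+\lambda\,{\mathcal{N}}_{\lambda}+{\mathcal{R}}_{\lambda}$ and ${\mathcal{N}}_{\lambda}$ encodes a Dirichlet-to-Neumann-type contribution coming from the lifting of $\Lambda\eta_1$ by the resolvent of the Stokes operator.

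Next I would treat the flat-domain case (or equivalently the principal symbol of ${V}_{\lambda}$) as a reference operator ${V}_{\lambda}^{(0)}$ obtained by replacing $\eta_1^0$ by $0$, and analyze it via Fourier series on $\mathcal{I}=\mathbb{R}/L\mathbb{Z}$. On each Fourier mode $k$ one obtains a scalar symbol of the form $\lambda^{2}+\alpha_1 k^{4}+\alpha_2 k^{2}+\lambda\,\mathfrak{n}(\lambda,k)$ with $\mathfrak{n}(\lambda,k)$ behaving like $\sqrt{\lambda+k^{2}}$ in the relevant regime, which is exactly what produces the Gevrey scaling $|\lambda|^{1/2}$ rather than the analytic scaling $|\lambda|$. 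A direct symbol calculation then yields the bound $\|({V}_{\lambda}^{(0)})^{-1}\|\lesssim|\lambda|^{-3/2}$ on $\mathcal{H}_{\mathcal{S}}$ together with the refined mapping properties corresponding to the spaces $\mathcal{D}(A_1^{7/8})$, $\mathcal{D}(A_1^{5/8})$, etc., appearing in \eqref{ResEstA0-3}. This is the step that fixes the exponents $7/8$, $5/8$, $3/8$, $1/8$ present in the theorem.

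The crucial passage is from ${V}_{\lambda}^{(0)}$ to ${V}_{\lambda}$ itself, i.e.\ to incorporate the genuine non-flat profile $\eta_1^0\in W^{7,\infty}(\mathcal{I})$. I would write ${V}_{\lambda}={V}_{\lambda}^{(0)}+\mathcal{E}_{\lambda}$ where $\mathcal{E}_{\lambda}$ is a sum of commutators between the changes of variable $X,Y$ (from \eqref{bea3.4}--\eqref{bea3.5}) and the Stokes resolvent/trace operators. These commutators, estimated in \cref{sec_com}, gain one derivative on the structure side and depend linearly on $\partial_s \eta_1^0, \partial_{ss}\eta_1^0,\dots$ up to order seven, so that $\mathcal{E}_{\lambda}({V}_{\lambda}^{(0)})^{-1}$ becomes a bounded operator whose norm decays as $|\lambda|\to\infty$. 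A Neumann series then inverts $I+\mathcal{E}_{\lambda}({V}_{\lambda}^{(0)})^{-1}$ for $|\lambda|\geq\alpha$ large, giving ${V}_{\lambda}^{-1}$ with the same bounds as $({V}_{\lambda}^{(0)})^{-1}$. For the remaining compact region $\lambda\in\mathbb{C}^{+}\cap\overline{B(0,\alpha)}$ one invokes \cref{AStrongContSG}: since $A_0$ is exponentially stable with compact resolvent, $(\lambda I-A_0)^{-1}$ is uniformly bounded on this compact set, which covers the low-frequency case and closes both \eqref{ResEstA0-1} and \eqref{ResEstA0-3}.

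I expect the main obstacle to be the commutator estimate step. The operator ${\mathcal{N}}_{\lambda}$ is non-local and $\lambda$-dependent, so controlling $[\mathcal{N}_{\lambda},\eta_1^0]$ and $[\text{trace},X]$ with a gain in $\lambda$ requires careful use of the $W^{7,\infty}$ regularity of $\eta_1^0$; this is precisely why the hypothesis on $\eta_1^0$ in the theorem is so strong. A secondary difficulty is propagating the exact fractional mapping properties through the Neumann series, since the spaces in \eqref{ResEstA0-3} are asymmetric in $\eta_1$ and $\eta_2$ and must match the Gevrey $1/2$ scaling dictated by the reference operator ${V}_{\lambda}^{(0)}$.
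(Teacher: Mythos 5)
Your reduction of the resolvent equation to the inversion of $V_{\lambda}$ on the structure variable, and your use of \cref{AStrongContSG} to cover the compact region $|\lambda|\le\alpha$, both match the paper (see \eqref{gev0.8}--\eqref{EqresolventA0}). But the crucial step — bounding $V_{\lambda}^{-1}$ — is where your approach diverges from the paper's and, as proposed, has a genuine gap.

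You propose to compare $V_{\lambda}$ to a flat-domain reference operator $V_{\lambda}^{(0)}$ (i.e.\ $\eta_1^0$ replaced by $0$), analyze $V_{\lambda}^{(0)}$ by Fourier series, and invert $I+\mathcal{E}_{\lambda}(V_{\lambda}^{(0)})^{-1}$ by a Neumann series, claiming that $\mathcal{E}_{\lambda}(V_{\lambda}^{(0)})^{-1}$ has small norm for $|\lambda|$ large. That claim does not hold. The difference $\mathcal{E}_{\lambda}=V_{\lambda}-V_{\lambda}^{(0)}$ contains a term $\lambda\,(G_{\lambda}-G_{\lambda}^{(0)})$, where both $G_{\lambda}$ and $G_{\lambda}^{(0)}$ are (Dirichlet-to-Neumann type) operators of order $A_1^{1/4}$ with constants of order one (compare \eqref{eq28-11-1a}), and their difference is of the same order with no smallness in $\eta_1^0$. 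Since $\|A_1^{1/4}(V_{\lambda}^{(0)})^{-1}\|_{\mathcal{L}(\mathcal{H}_{\mathcal{S}})}\sim|\lambda|^{-1}$, the composed operator $\lambda\,(G_{\lambda}-G_{\lambda}^{(0)})(V_{\lambda}^{(0)})^{-1}$ is $O(1)$ rather than $o(1)$ as $|\lambda|\to\infty$. The Neumann series therefore does not close by taking $|\lambda|$ large — one would additionally need $\eta_1^0$ small, which is precisely the restriction the paper is designed to remove. The commutator estimates of \cref{sec_com} will not rescue this: they bound $[A_1^{3/8},K_{\lambda}]$ (a commutator of $K_{\lambda}$ with a power of $A_1$ on the \emph{same} curved domain), not the geometric difference $K_{\lambda}-K_{\lambda}^{(0)}$ between the curved and flat operators, which is the object you would need to control.

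The paper instead never compares to the flat geometry. It approximates $V_{\lambda}$ by $\widetilde{V}_{\lambda}=\lambda^2(I+K_{\lambda})+2\rho\lambda A_1^{1/4}+A_1$ (see \eqref{gev2.3}), defined on the \emph{same} curved domain, replacing only $G_{\lambda}$ by its lower bound $2\rho A_1^{1/4}$ with $\rho<\rho_1/4$. The perturbation $S_{\lambda}=G_{\lambda}-2\rho A_1^{1/4}$ is not small, but it is positive and self-adjoint; combined with the numerical range property $\Re\langle\lambda\widetilde{V}_{\lambda}^{-1}\zeta,\zeta\rangle\ge 0$ (see \eqref{gev5.7}), this yields $\|S_{\lambda}^{1/2}V_{\lambda}^{-1}\eta\|\le\|S_{\lambda}^{1/2}\widetilde{V}_{\lambda}^{-1}\eta\|$, from which the bounds on $V_{\lambda}^{-1}$ follow (\cref{ThmEstV}). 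The commutator estimate of \cref{2017Commutateur1} enters earlier, inside the proof of \cref{T03}, to control the cross term $\mathcal{C}_{\lambda}$ that appears because $A_1^{3/8}$ does not commute with $K_{\lambda}$; the flat-strip change of variables in \cref{toflat} is only an internal device for proving that estimate. Also note the paper obtains the fractional exponents directly from Stokes regularity estimates (\cref{prop23novBis}, \cref{P01}), not from an explicit symbol computation; your proposed Fourier-symbol calculation could serve as an alternative for the flat operator, but it does not address the non-flat problem.
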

Using the above theorem and Theorem 5.1 in \cite{plat}, we deduce \cref{T04}.
\section{Definition and properties of some operators}\label{sec_op}
This section is devoted to the introduction of several operators that are used to prove the resolvent estimates in \cref{CorMain}.
In this section we assume $\eta_1^0\in W^{4,\infty}(\mathcal{I})$. It implies in particular that the domain $\mathcal{F}$ is of class $C^{3,1}$. 

For all $\lambda\in \mathbb{C}^{+}$, we define the solution $(w_\eta, q_\eta)$ (that depends on $\lambda$) of
\begin{equation}\label{gev0.6}
\left\{\begin{array}{rl}
\lambda w_\eta -\div \mathbb{T}(w_\eta,q_\eta) = 0 & \text{in} \ \mathcal{F},\\
\div w_\eta =   0  & \text{in} \ \mathcal{F},\\
w_\eta = \Lambda\eta  & \text{on} \  \partial \mathcal{F},
\end{array}\right.
\end{equation}
where $\Lambda$ is defined by \eqref{ws9.1}. The above problem is well-posed (see, for instance, \cite[Proposition 4.4]{plat}) and if we define the operators
\begin{equation}\label{bea0.1}
W_{\lambda} \eta \ov w_\eta, \quad Q_{\lambda} \eta \ov q_\eta,
\end{equation}
since $\mathcal{F}$ is of class $C^{3,1}$, we have
\begin{equation}\label{np0004}
W_{\lambda} \in \mathcal{L}(\mathcal{D}(A_1^{7/8}),\mathbf{H}^4(\mathcal{F}))
	\cap \mathcal{L}(\mathcal{D}(A_1^{1/8}),\mathbf{H}^1(\mathcal{F}))
	\cap \mathcal{L}(\mathcal{D}(A_1^{1/8})',\mathbf{L}^2(\mathcal{F}))
\end{equation}
and
\begin{equation}\label{np0005}
Q_{\lambda} \in \mathcal{L}(\mathcal{D}(A_1^{3/8}),H^1(\mathcal{F})/\mathbb{R}).
\end{equation}

We also define the operator
$$
L_{\lambda}\in \mathcal{L}(\mathcal{D}(A_1^{3/8}),\mathcal{D}(A_1^{1/8}))
$$ 
by
\begin{equation}\label{DefG}
\displaystyle L_{\lambda} \eta \ov  \Lambda^* \left\{\mathbb{T}(w_\eta,q_\eta)n_{|\partial\mathcal{F}}\right\}.
\end{equation}

We decompose $L_{\lambda}$ with the operators
$$
K_{\lambda}\in \mathcal{L}(\mathcal{D}(A_1^{1/8})',\mathcal{D}(A_1^{1/8})),
\quad 
G_{\lambda}\in \mathcal{L}(\mathcal{D}(A_1^{1/8}),\mathcal{D}(A_1^{1/8})') \cap \mathcal{L}(\mathcal{D}(A_1^{3/8}),\mathcal{D}(A_1^{1/8}))
$$
defined by
\begin{equation}\label{np0034}
\langle K_{\lambda}\eta, \zeta\rangle_{\mathcal{D}(A_1^{1/8}),\mathcal{D}(A_1^{1/8})'} \ov \int_\mathcal{F} w_\eta\cdot \overline{w_\zeta} dy
\end{equation}
and
$$
\langle G_{\lambda}\eta, \zeta\rangle_{\mathcal{D}(A_1^{1/8})',\mathcal{D}(A_1^{1/8})} \ov 2 \nu \int_\mathcal{F} D w_\eta : D \overline{w_\zeta} dy
=2\nu \int_{0}^L \Lambda^*((D w_\eta)n) \ \overline{\zeta} \ ds-  \nu \int_\mathcal{F} \Delta w_\eta \cdot  \overline{w_\zeta} dy.
$$
(The second relation holds if $\eta\in \mathcal{D}(A_1^{3/8})$).

The operators $K_{\lambda}$ and $G_{\lambda}$ are related to the operator $L_{\lambda}$ defined by \eqref{DefG}:
multiplying \eqref{gev0.6} by $\overline{w_{\zeta}}$ and integrating by part, we deduce that 
\begin{equation}\label{gev1.2}
L_{\lambda}=\lambda K_{\lambda}+G_{\lambda}.
\end{equation}
We recall the following result (see Proposition 3.1 in \cite{plat}):
\begin{Proposition}\label{P04}
The operators $K_{\lambda}$ and $G_{\lambda}$ defined above are positive and self-adjoint. Moreover there exist 
$0<\rho_1<\rho_2$ such that for any $\lambda$ such that $\Re \lambda>0$, we have 
\begin{equation}\label{eq28-11-1a}
\rho_1 \|A_{1}^{1/8}\eta\|^2_{\mathcal{H}_{\mathcal{S}}}
\leq 
\langle G_{\lambda}\eta, \eta\rangle_{\mathcal{D}(A_1^{1/8})',\mathcal{D}(A_1^{1/8})}
\leq \rho_2 \left(\|A_{1}^{1/8}\eta\|^2_{\mathcal{H}_{\mathcal{S}}}+|\lambda|\|A_1^{-1/8}\eta\|_{\mathcal{H}_{\mathcal{S}}}^2\right)
 \quad (\eta\in \mathcal{D}(A_1^{1/8})),
\end{equation}
\begin{equation}\label{eq28-11-1b}
0 \leq 
\langle K_{\lambda}\eta, \eta\rangle_{\mathcal{D}(A_1^{1/8}),\mathcal{D}(A_1^{1/8})'} 
\leq \rho_2 \|A_1^{-1/8}\eta\|_{\mathcal{H}_{\mathcal{S}}}^2
\quad (\eta\in \mathcal{D}(A_1^{1/8})').
\end{equation}
\end{Proposition}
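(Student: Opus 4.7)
Self-adjointness and positivity will follow by inspection of the two sesquilinear forms. From the definitions of $K_\lambda$ and $G_\lambda$ one has
$$
\langle K_\lambda\eta,\zeta\rangle = (w_\eta,w_\zeta)_{{\bf L}^2(\mathcal{F})}, \qquad \langle G_\lambda\eta,\zeta\rangle = 2\nu(Dw_\eta,Dw_\zeta)_{{\bf L}^2(\mathcal{F})},
$$
and, since $\eta\mapsto w_\eta$ is linear, both forms are hermitian and non-negative, so $K_\lambda$ and $G_\lambda$ are positive and self-adjoint in the appropriate duality. The identity $L_\lambda = \lambda K_\lambda + G_\lambda$ in \eqref{gev1.2} is then recovered by testing \eqref{gev0.6} against $\overline{w_\zeta}$ and integrating by parts, using $w_\zeta|_{\partial\mathcal{F}}=\Lambda\zeta$.

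For the lower bound in \eqref{eq28-11-1a}, I would apply Korn's inequality on $\mathcal{F}$ to vector fields vanishing on $\Gamma_{-1}$, which yields $\|Dw_\eta\|_{{\bf L}^2(\mathcal{F})}^2\geq c\|w_\eta\|_{{\bf H}^1(\mathcal{F})}^2$. Combined with the trace theorem and the equivalence \eqref{gev9.6} at $\theta=1/2$ applied to $w_\eta|_{\partial\mathcal{F}}=\Lambda\eta$, this gives $\|Dw_\eta\|^2\geq c\|A_1^{1/8}\eta\|^2_{\mathcal{H}_\mathcal{S}}$, as required.

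For the upper bounds I would split $w_\eta = w^{(0)} + w^{(1)}$, where $w^{(0)}$ solves the stationary Stokes problem in $\mathcal{F}$ with Dirichlet data $\Lambda\eta$ (the flux compatibility $\int_{\partial\mathcal{F}}\Lambda\eta\cdot n\,d\gamma$ reduces by the change of variable on $\Gamma_{\eta_1^0}$ to $\int_0^L M\eta\,ds=0$, which holds by definition of $M$), and $w^{(1)}$ solves $\lambda w^{(1)}-\nu\Delta w^{(1)}+\nabla q^{(1)}=-\lambda w^{(0)}$, $\div w^{(1)} = 0$, with homogeneous Dirichlet data. Classical Stokes regularity gives $\|w^{(0)}\|_{{\bf H}^1}\leq C\|\Lambda\eta\|_{{\bf H}^{1/2}(\partial\mathcal{F})}\leq C\|A_1^{1/8}\eta\|$, while a duality argument against the adjoint stationary Stokes problem, combined with the dual of \eqref{gev9.5} at $\theta=1/2$, yields the negative-index estimate $\|w^{(0)}\|_{{\bf L}^2}\leq C\|A_1^{-1/8}\eta\|$. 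For $w^{(1)}$, the standard resolvent estimate for the Stokes operator on $\mathbb{P}{\bf L}^2(\mathcal{F})$ (uniform for $\lambda\in\mathbb{C}^+$) gives $\|w^{(1)}\|_{{\bf L}^2}\leq C\|w^{(0)}\|_{{\bf L}^2}$, and testing the resolvent equation against $\overline{w^{(1)}}$ and taking real parts produces $\|Dw^{(1)}\|^2\leq C|\lambda|\,\|w^{(0)}\|^2_{{\bf L}^2}$. Adding the two contributions gives
$$
\|w_\eta\|_{{\bf L}^2}^2\leq C\|A_1^{-1/8}\eta\|^2, \qquad \|Dw_\eta\|^2\leq C\bigl(\|A_1^{1/8}\eta\|^2+|\lambda|\|A_1^{-1/8}\eta\|^2\bigr),
$$
which are exactly the upper bounds in \eqref{eq28-11-1b} and \eqref{eq28-11-1a}.

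The main obstacle is the negative-index $L^2$ bound $\|w^{(0)}\|_{{\bf L}^2}\leq C\|A_1^{-1/8}\eta\|$: a pure boundary-data Stokes problem is not directly estimated in such negative scales, so one has to perform a careful duality pairing with an $H^2$-regular solution of the adjoint Stokes problem, and it is here that the $W^{4,\infty}$ regularity of $\eta_1^0$ (giving $\mathcal{F}$ of class $C^{3,1}$) enters to ensure the required elliptic regularity of the dual problem.
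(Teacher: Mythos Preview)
The paper does not actually give its own proof of this proposition: it simply states ``We recall the following result (see Proposition 3.1 in \cite{plat})'' and cites the earlier work. So there is no in-paper argument to compare against directly.

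That said, your plan is sound and is precisely the natural route; in fact the decomposition $w_\eta = w^{(0)} + w^{(1)}$ you propose is exactly the one the present paper uses later (writing $W_\lambda\eta = W_0\eta + z_\eta$) in the proof of \cref{P01} to derive the closely related estimates \eqref{eq23-06-2017-1-0}--\eqref{eq23-06-2017-2Bis}. Your treatment of the lower bound via Korn plus the trace inequality and \eqref{gev9.6}, and of the upper bounds via the Stokes resolvent estimate for $w^{(1)}$, is correct. The negative-index bound $\|w^{(0)}\|_{{\bf L}^2}\leq C\|A_1^{-1/8}\eta\|$ that you flag as the main obstacle is handled exactly as you outline: pair $w^{(0)}$ against $f = -\div\mathbb{T}(\phi,\pi)$ with $(\phi,\pi)$ the $H^2\times H^1$ Stokes solution with homogeneous Dirichlet data, reduce to a boundary pairing $\langle \mathbb{T}(\phi,\pi)n,\Lambda\eta\rangle_{H^{1/2},H^{-1/2}}$, and conclude using the dual of \eqref{gev9.5} at $\theta=1/2$. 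This is also how the paper obtains \eqref{np0004}, which it records without proof (again citing \cite{plat}).
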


Note that we have 
\begin{equation}\label{DefK}
K_{\lambda}\eta=-\Lambda^* \{\mathbb{T}(\varphi_\eta,\pi_\eta)n|_{\partial \mathcal{F}}\}
\end{equation}
where 
\begin{equation}\label{gev1.3DefK}
\left\{\begin{array}{rl}
\overline{\lambda} \varphi_\eta -\div \mathbb{T}(\varphi_\eta,\pi_\eta) = W_{\lambda}\eta & \text{in}\ \mathcal{F},\\
\div \varphi_\eta =   0  & \text{in}\ \mathcal{F},\\
\varphi_\eta = 0  & \text{on}\ \partial \mathcal{F},
\end{array}\right.
\end{equation}
and where $W_{\lambda}$ is defined by \eqref{bea0.1}.

Next, we define an important operator in what follows:
\begin{equation}\label{gev9.8}
V_{\lambda}=\lambda^2 I+\lambda L_{\lambda}+A_1
=\lambda^2 (I+K_{\lambda})+ \lambda G_{\lambda}+ A_1,
\end{equation}
and an ``approximation'':
\begin{equation}\label{gev2.3}
\widetilde{V}_{\lambda}\ov \lambda^2 (I+K_{\lambda}) + 2\rho \lambda A_1^{1/4}+ A_1,
\end{equation}
where $\rho>0$ is a constant to be fixed later.

Let us consider
\begin{equation}\label{gev0.5}
\left\{\begin{array}{rl}
\lambda \widehat v -\div \mathbb{T}(\widehat v,\widehat p) = f& \text{in}\ \mathcal{F},\\
\div \widehat  v =   0& \text{in}\ \mathcal{F},\\
\widehat  v = 0 & \text{on}\ \partial \mathcal{F}.
\end{array}\right.
\end{equation}

\begin{Proposition}\label{prop23novBis}
Let $\gamma\in [0,1/4)$, $\theta\in [\gamma,1]$. 
\begin{enumerate}
\item There exists $C>0$ such that for any $f\in {\bf H}^{2\gamma}(\mathcal{F})$
and for any $\lambda \in \mathbb{C}^+_0$, the solution $(\widehat v,\widehat p)$ of \eqref{gev0.5} satisfies
\begin{equation}\label{EstStokeslambdanh23novBis}
\|\widehat v\|_{{\bf H}^{2\theta}(\mathcal{F})}\leq C |\lambda|^{\theta-\gamma-1} \|f\|_{{\bf H}^{2\gamma}(\mathcal{F})}.
\end{equation}
\item There exists $C>0$ such that for any $f\in {\bf H}^{2\theta}(\mathcal{F})$
and for any $\lambda \in \mathbb{C}^+_0$, the solution $(\widehat v,\widehat p)$ of \eqref{gev0.5} satisfies
\begin{equation}\label{EstStokeslambdanh23nov}
\|\widehat v\|_{{\bf H}^{2+2\theta}(\mathcal{F})}+\|\widehat p\|_{{\bf H}^{1+2\theta}(\mathcal{F})}
	\leq C \left(|\lambda|^{\theta-\gamma} \|f\|_{{\bf H}^{2\gamma}(\mathcal{F})}+\|f\|_{{\bf H}^{2\theta}(\mathcal{F})}\right).
\end{equation}
\end{enumerate}
\end{Proposition}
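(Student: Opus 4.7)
The plan is to recast \eqref{gev0.5} in terms of the Stokes operator $\mathbb{A}$ defined in \eqref{gev9.4}. Since $\widehat v$ is divergence-free and vanishes on $\partial\mathcal{F}$, applying the Leray projector $\mathbb{P}$ to the momentum equation yields $\lambda\widehat v-\mathbb{A}\widehat v=\mathbb{P}f$, so that
\begin{equation*}
\widehat v=(\lambda I-\mathbb{A})^{-1}\mathbb{P}f,
\end{equation*}
and the pressure $\widehat p$ is recovered from the Helmholtz component $(I-\mathbb{P})(f-\lambda\widehat v+\nu\Delta\widehat v)=\nabla\widehat p$. Both estimates then reduce to resolvent bounds for $\mathbb{A}$ combined with standard stationary Stokes regularity on the curved domain $\mathcal{F}$.

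\textbf{Part 1.} The Stokes operator $\mathbb{A}$ generates a bounded analytic semigroup on $\mathbf{V}^0_{n}(\mathcal{F})$, which gives the classical bound
\begin{equation*}
\|(-\mathbb{A})^{\beta}(\lambda I-\mathbb{A})^{-1}\|_{\mathcal{L}(\mathbf{V}^0_{n}(\mathcal{F}))}\leq C|\lambda|^{\beta-1},\qquad \beta\in[0,1],\ \lambda\in\mathbb{C}^+_0.
\end{equation*}
The restriction $\gamma<1/4$ enters through the fractional-domain identification $\mathcal{D}((-\mathbb{A})^{\gamma})=\mathbf{V}^{2\gamma}_{n}(\mathcal{F})$ and the boundedness of the Leray projection $\mathbb{P}:\mathbf{H}^{2\gamma}(\mathcal{F})\to\mathbf{V}^{2\gamma}_{n}(\mathcal{F})$; both hold precisely because $2\gamma<1/2$, so no boundary trace condition has to be imposed. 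For $\theta\in[\gamma,1]$ one then writes
\begin{equation*}
(-\mathbb{A})^{\theta}(\lambda I-\mathbb{A})^{-1}\mathbb{P}f=(-\mathbb{A})^{\theta-\gamma}(\lambda I-\mathbb{A})^{-1}(-\mathbb{A})^{\gamma}\mathbb{P}f,
\end{equation*}
applies the resolvent bound with $\beta=\theta-\gamma\in[0,1]$, and uses $\|v\|_{\mathbf{H}^{2\theta}}\leq C\|(-\mathbb{A})^{\theta}v\|_{\mathbf{V}^0_{n}}$ (valid for $\theta\in[0,1]$) to deduce \eqref{EstStokeslambdanh23novBis}.

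\textbf{Part 2.} Rewrite \eqref{gev0.5} as the stationary Stokes system with homogeneous Dirichlet data
\begin{equation*}
-\nu\Delta\widehat v+\nabla\widehat p=f-\lambda\widehat v,\quad \div\widehat v=0 \ \text{in }\mathcal{F},\quad \widehat v=0 \ \text{on }\partial\mathcal{F}.
\end{equation*}
Since $\eta_1^0\in W^{4,\infty}(\mathcal{I})$, the domain $\mathcal{F}$ is of class $C^{3,1}$, so the classical elliptic regularity for the Stokes system applies up to $\mathbf{H}^{4}$: for each $\theta\in[0,1]$,
\begin{equation*}
\|\widehat v\|_{\mathbf{H}^{2+2\theta}}+\|\widehat p\|_{H^{1+2\theta}}\leq C\bigl(\|f\|_{\mathbf{H}^{2\theta}}+|\lambda|\,\|\widehat v\|_{\mathbf{H}^{2\theta}}\bigr).
\end{equation*}
Inserting the Part~1 estimate $\|\widehat v\|_{\mathbf{H}^{2\theta}}\leq C|\lambda|^{\theta-\gamma-1}\|f\|_{\mathbf{H}^{2\gamma}}$ into the last term produces the $|\lambda|^{\theta-\gamma}\|f\|_{\mathbf{H}^{2\gamma}}$ contribution in \eqref{EstStokeslambdanh23nov}.

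\textbf{Main obstacle.} The delicate point is to justify, on the merely $C^{3,1}$ curved domain $\mathcal{F}$, both the fractional-power characterization $\mathcal{D}((-\mathbb{A})^{\gamma})=\mathbf{V}^{2\gamma}_{n}(\mathcal{F})$ with the associated boundedness of $\mathbb{P}$ for $\gamma<1/4$, and the Stokes elliptic regularity up to $\mathbf{H}^{4}$. These are standard but boundary-regularity-sensitive facts; they account both for the hypothesis $\eta_1^0\in W^{4,\infty}(\mathcal{I})$ and for the restriction $\gamma<1/4$ appearing in the statement.
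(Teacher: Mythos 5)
Your proof is correct and follows essentially the same route as the paper: reduce to $\widehat v=(\lambda I-\mathbb{A})^{-1}\mathbb{P}f$, use the analytic-semigroup resolvent bound $\|(-\mathbb{A})^{\beta}(\lambda I-\mathbb{A})^{-1}\|\leq C|\lambda|^{\beta-1}$ together with $\mathbb{P}\in\mathcal{L}(\mathbf{H}^{2\gamma},\mathcal{D}((-\mathbb{A})^{\gamma}))$ for $\gamma<1/4$ and the embedding $\mathcal{D}((-\mathbb{A})^{\theta})\subset\mathbf{H}^{2\theta}$ for Part~1, then invoke classical stationary Stokes regularity with source $f-\lambda\widehat v$ and plug Part~1 back in for Part~2. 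The only cosmetic difference is that you make the factorization $(-\mathbb{A})^{\theta}=(-\mathbb{A})^{\theta-\gamma}(-\mathbb{A})^{\gamma}$ explicit, and you should write $\nu\mathbb{A}$ rather than $\mathbb{A}$ in the projected equation given the paper's definition $\mathbb{A}=\mathbb{P}\Delta$, but this rescaling changes nothing.
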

\begin{proof}
Using that the Stokes operator $\mathbb{A}$ (defined by \eqref{gev9.4}) is the infinitesimal generator of an analytic semigroup and that
$\mathbb{C}^+_0\subset \rho(\mathbb{A})$, we have the existence of a constant $C$ such that 
$$
\|(-\mathbb{A})^{\alpha} (\lambda I-\mathbb{A})^{-1} g \|_{{\bf L}^{2}(\mathcal{F})}
\\
\leq C|\lambda|^{\alpha-1} \|g \|_{{\bf L}^{2}(\mathcal{F})} 
\quad (g\in {\bf V}_{n}^{0}(\mathcal{F}), \ \lambda\in \mathbb{C}^+_0, \ \alpha \in [0,1]).
$$
Using that for $\gamma\in [0,1/4)$, $\mathbb{P}\in \mathcal{L}({\bf H}^{2\gamma}(\mathcal{F}), \mathcal{D}((-\mathbb{A})^{\gamma}))$ (see \cite[Section 2.1]{BADRA-SICON-2009}), we have
$$
\|(-\mathbb{A})^{\gamma} \mathbb{P} f \|_{{\bf L}^{2}(\mathcal{F})}
\leq C \|f\|_{{\bf H}^{2\gamma}(\mathcal{F})} \quad (\gamma \in [0,1/4), \ f\in {\bf H}^{2\gamma}(\mathcal{F})).
$$
Gathering the two above estimates with the fact that
$\mathcal{D}((-\mathbb{A})^{\theta}) \subset {\bf H}^{2\theta}(\mathcal{F})$, we can deduce
$$
\|\widehat  v\|_{{\bf H}^{2\theta}(\mathcal{F})}
\leq C\left\|(-\mathbb{A})^{\theta}(\lambda I-\mathbb{A})^{-1} \mathbb{P} f\right\|_{{\bf L}^{2}(\mathcal{F})}
\leq C|\lambda|^{\theta-\gamma-1} \|f\|_{{\bf H}^{2\gamma}(\mathcal{F})}.
$$
For the second estimate, we use the following classical estimate for Stokes system:
$$
\|\widehat v\|_{{\bf H}^{2+2\theta}(\mathcal{F})}+\|\widehat p\|_{{\bf H}^{1+2\theta}(\mathcal{F})}
	\leq C\left(|\lambda|\|\widehat v\|_{{\bf H}^{2\theta}(\mathcal{F})}+\|f\|_{{\bf H}^{2\theta}(\mathcal{F})}\right),
$$
and we combine it with  \eqref{EstStokeslambdanh23novBis}.
\end{proof}

Using the above proposition, we can define the following operator
\begin{equation}\label{np0056}
\mathcal{T}_{\lambda} \in \mathcal{L}({\bf L}^2(\mathcal{F}),\mathcal{D}(A_1^{1/8})),
\quad
\mathcal{T}_{\lambda}f\ov - \Lambda^*\left\{\mathbb{T}(\widehat v,\widehat p)n_{|\partial\mathcal{F}}\right\}.
\end{equation}
We have in particular that the norm of 
$\mathcal{T}_{\lambda}$ in $\mathcal{L}({\bf L}^2(\mathcal{F}),\mathcal{D}(A_1^{1/8}))$ is independent of $\lambda$.

\begin{Proposition}\label{P01}
For $\theta\in [0,1]$, $\varepsilon\in (0,1/4)$ and $\lambda \in \mathbb{C}^+_0$, the operators $W_{\lambda}$ and $Q_{\lambda}$ defined by \eqref{bea0.1} satisfy
\begin{equation}\label{eq23-06-2017-1-0}
\|W_{\lambda} \eta\|_{{\bf H}^{2\theta}(\mathcal{F})}\leq C\|A_1^{\theta/2-1/8}\eta\|_{\mathcal{H}_{\mathcal{S}}}\quad (\theta< 1/4),
\end{equation}
\begin{equation}\label{eq23-06-2017-1}
\|W_{\lambda}\eta\|_{{\bf H}^{2\theta}(\mathcal{F})}\leq C\left(\|A_1^{\theta/2-1/8}\eta\|_{\mathcal{H}_{\mathcal{S}}}+|\lambda|^{\theta}\|A_1^{-1/8}\eta\|_{\mathcal{H}_{\mathcal{S}}}\right)\quad (\theta\geq 1/4),
\end{equation}
\begin{equation}\label{eq23-06-2017-2}
\|W_{\lambda}\eta\|_{{\bf H}^{2\theta}(\mathcal{F})}\leq C\left(\|A_1^{\theta/2-1/8}\eta\|_{\mathcal{H}_{\mathcal{S}}}+|\lambda|^{\theta-1/4+\varepsilon}\|\eta\|_{\mathcal{H}_{\mathcal{S}}}\right),\quad \theta\geq 1/4-\varepsilon,
\end{equation}
\begin{equation}\label{eq23-06-2017-1Bis}
\|W_{\lambda}\eta\|_{{\bf H}^{2+2\theta}(\mathcal{F})}+\|Q_{\lambda}\eta\|_{H^{1+2\theta}(\mathcal{F})}
\leq C\left(\|A_1^{\theta/2+3/8}\eta\|_{\mathcal{H}_{\mathcal{S}}}+|\lambda|^{1+\theta}\|A_1^{-1/8}\eta\|_{\mathcal{H}_{\mathcal{S}}}\right),
\end{equation}
\begin{equation}\label{eq23-06-2017-2Bis}
\|W_{\lambda}\eta\|_{{\bf H}^{2+2\theta}(\mathcal{F})}+\|Q_{\lambda}\eta\|_{H^{1+2\theta}(\mathcal{F})}
\leq C\left(\|A_1^{\theta/2+3/8}\eta\|_{\mathcal{H}_{\mathcal{S}}}+|\lambda|^{3/4+\varepsilon+\theta}\|\eta\|_{\mathcal{H}_{\mathcal{S}}}\right).
\end{equation}
\end{Proposition}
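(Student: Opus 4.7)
The strategy is to reduce \eqref{gev0.6} to the homogeneous Stokes resolvent problem \eqref{gev0.5} by subtracting a $\lambda$-independent lift of the boundary data. Let $(r_\eta,\pi_\eta)$ denote the solution of the steady Stokes problem
$$
-\nu\Delta r_\eta+\nabla\pi_\eta=0, \quad \div r_\eta=0 \text{ in } \mathcal{F}, \quad r_\eta=\Lambda\eta \text{ on } \partial\mathcal{F}.
$$
Since $\mathcal{F}$ is of class $C^{3,1}$, the classical elliptic Stokes theory (by transposition at the low end and by standard shift theorems at the high end) provides, for every admissible $s$,
$$
\|r_\eta\|_{\mathbf{H}^{s}(\mathcal{F})}+\|\pi_\eta\|_{H^{s-1}(\mathcal{F})/\mathbb{R}}\le C\,\|\Lambda\eta\|_{\mathbf{H}^{s-1/2}(\partial\mathcal{F})},
$$
and combining this with the mapping properties of $\Lambda$ between $\mathcal{D}(A_1^\sigma)$ and $\mathbf{H}^{4\sigma}(\partial\mathcal{F})$ (obtained from \eqref{bea0.0} and by duality from \eqref{gev9.5}) one gets
$$
\|r_\eta\|_{\mathbf{H}^{2\theta}(\mathcal{F})}\le C\,\|A_1^{\theta/2-1/8}\eta\|_{\mathcal{H}_{\mathcal{S}}}.
$$

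Setting $u:=W_\lambda\eta-r_\eta$ and $\tilde q:=Q_\lambda\eta-\pi_\eta$, the pair $(u,\tilde q)$ solves \eqref{gev0.5} with source $f=-\lambda r_\eta$ and homogeneous boundary data, the term $\nu\Delta r_\eta$ being cancelled by $-\nabla\pi_\eta$ by construction. For $\gamma\in[0,1/4)$, \cref{prop23novBis} Part~1 yields
$$
\|u\|_{\mathbf{H}^{2\theta}(\mathcal{F})}\le C|\lambda|^{\theta-\gamma}\|r_\eta\|_{\mathbf{H}^{2\gamma}(\mathcal{F})},
$$
and Part~2 yields
$$
\|u\|_{\mathbf{H}^{2+2\theta}(\mathcal{F})}+\|\tilde q\|_{H^{1+2\theta}(\mathcal{F})}\le C\bigl(|\lambda|^{1+\theta-\gamma}\|r_\eta\|_{\mathbf{H}^{2\gamma}(\mathcal{F})}+|\lambda|\,\|r_\eta\|_{\mathbf{H}^{2\theta}(\mathcal{F})}\bigr).
$$
Specializing $\gamma$ and inserting the lifting bound produces the six estimates: $\gamma=\theta$ gives \eqref{eq23-06-2017-1-0}; $\gamma=0$ gives \eqref{eq23-06-2017-1} and the skeleton of \eqref{eq23-06-2017-1Bis}; and $\gamma=1/4-\varepsilon$ gives \eqref{eq23-06-2017-2} and the skeleton of \eqref{eq23-06-2017-2Bis}. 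The decompositions $W_\lambda\eta=u+r_\eta$ and $Q_\lambda\eta=\tilde q+\pi_\eta$ then furnish the bounds on $W_\lambda\eta$ and $Q_\lambda\eta$.

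The remaining issue is the cross term $|\lambda|\,\|r_\eta\|_{\mathbf{H}^{2\theta}(\mathcal{F})}\le C|\lambda|\,\|A_1^{\theta/2-1/8}\eta\|_{\mathcal{H}_{\mathcal{S}}}$ coming from Part~2. One writes $\theta/2-1/8$ as a convex combination of the exponents already present on the right-hand side of the target (namely $-1/8$ and $\theta/2+3/8$ for \eqref{eq23-06-2017-1Bis}, and $0$ and $\theta/2+3/8$ for \eqref{eq23-06-2017-2Bis}), then applies the spectral interpolation inequality for fractional powers of $A_1$ together with Young's inequality to obtain, for instance,
$$
|\lambda|\,\|A_1^{\theta/2-1/8}\eta\|_{\mathcal{H}_{\mathcal{S}}}\le C\bigl(|\lambda|^{1+\theta}\|A_1^{-1/8}\eta\|_{\mathcal{H}_{\mathcal{S}}}+\|A_1^{\theta/2+3/8}\eta\|_{\mathcal{H}_{\mathcal{S}}}\bigr),
$$
and analogously $|\lambda|^{3/4+\theta+\varepsilon}\|\eta\|_{\mathcal{H}_{\mathcal{S}}}+\|A_1^{\theta/2+3/8}\eta\|_{\mathcal{H}_{\mathcal{S}}}$ for \eqref{eq23-06-2017-2Bis}. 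The main delicate point is not the bookkeeping but ensuring that the elliptic Stokes estimate is available for $s\in[-1/2,0]$; this follows from the $C^{3,1}$ regularity of $\partial\mathcal{F}$ (guaranteed by $\eta_1^0\in W^{4,\infty}(\mathcal{I})$) and a standard transposition argument.
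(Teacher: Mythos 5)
Your proof is correct and follows essentially the same route as the paper's: your $\lambda$-independent lift $(r_\eta,\pi_\eta)$ is precisely $(W_0\eta,Q_0\eta)$ (the $\lambda=0$ case of \eqref{gev0.6}), and the decomposition $W_\lambda\eta=r_\eta+u$ with $u$ solving \eqref{gev0.5} with source $f=-\lambda r_\eta$ is exactly the paper's splitting $W_\lambda\eta=W_0\eta+z_\eta$ from \eqref{gev1.6}. The subsequent applications of \cref{prop23novBis} with the same choices of $\gamma$ and the interpolation/Young argument to absorb the cross term $|\lambda|\,\|r_\eta\|_{\mathbf{H}^{2\theta}(\mathcal{F})}$ likewise reproduce the paper's computation.
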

\begin{proof}
We write
$$
W_{\lambda}\eta = W_{0}\eta +z_\eta, \quad Q_{\lambda}\eta = Q_{0}\eta +\zeta_\eta,
$$
with
\begin{equation}\label{gev1.6}
\left\{\begin{array}{c}
\lambda z_\eta -\div \mathbb{T}(z_\eta,\zeta_\eta) = -\lambda W_{0}\eta\quad \text{in} \ \mathcal{F},\\
\div z_\eta =   0  \quad \text{in} \ \mathcal{F},\\
z_\eta = 0 \quad \text{on} \ \partial \mathcal{F}.
\end{array}\right.
\end{equation}
Using \eqref{np0004}, there exists a positive constant $C$ such that
\begin{equation}\label{eq2017-1}
\|W_0 \eta\|_{{\bf H}^{2\theta}(\mathcal{F})} 
\leq 
C \|A_1^{\theta/2-1/8} \eta\|_{\mathcal{H}_{\mathcal{S}}} \quad (\theta\in [0,2], \ \eta \in \mathcal{D}(A_1^{\theta/2-1/8})).
\end{equation}
Combining the above relation with \eqref{EstStokeslambdanh23novBis} we deduce the following relations:
$$
\|z_{\eta}\|_{{\bf H}^{2\theta}(\mathcal{F})}
\leq C |\lambda|^{\theta}\|A_1^{-1/8} \eta\|_{\mathcal{H}_{\mathcal{S}}},
$$
$$
\|z_{\eta}\|_{{\bf H}^{2\theta}(\mathcal{F})}
\leq C \|A_1^{\theta/2-1/8} \eta\|_{\mathcal{H}_{\mathcal{S}}} \quad \text{if} \ \theta\in [0,1/4),
$$
$$
\|z_{\eta}\|_{{\bf H}^{2\theta}(\mathcal{F})}
\leq C |\lambda|^{\theta-1/4+\varepsilon}\|\eta\|_{\mathcal{H}_{\mathcal{S}}}
\quad (\theta\geq 1/4-\varepsilon).
$$
Then \eqref{eq23-06-2017-1-0}, \eqref{eq23-06-2017-1} and \eqref{eq23-06-2017-2} follow by combining the above inequalities with \eqref{eq2017-1}.

From \eqref{EstStokeslambdanh23nov} we deduce
\begin{equation}\label{np0006}
\|z_{\eta}\|_{{\bf H}^{2+2\theta}(\mathcal{F})}+\|\zeta_{\eta}\|_{H^{1+2\theta}(\mathcal{F})}
\leq 
C \left(|\lambda|^{3/4+\varepsilon+\theta}\|W_{0}\eta\|_{{\bf H}^{1/2-2\varepsilon}(\mathcal{F})}+|\lambda|\|W_{0}\eta\|_{{\bf H}^{2\theta}(\mathcal{F})}\right),
\end{equation}
and
\begin{equation}\label{np0007}
\|z_{\eta}\|_{{\bf H}^{2+2\theta}(\mathcal{F})}+\|\zeta_{\eta}\|_{H^{1+2\theta}(\mathcal{F})}
\leq 
C \left(|\lambda|^{1+\theta}\|W_{0}\eta\|_{{\bf L}^{2}(\mathcal{F})}+|\lambda|\|W_{0}\eta\|_{{\bf H}^{2\theta}(\mathcal{F})}\right).
\end{equation}
Moreover, from \eqref{eq2017-1}, we have
\begin{multline*}
|\lambda|\|W_{0}\eta\|_{{\bf H}^{2\theta}(\mathcal{F})}
\leq C\left(|\lambda|^{1+\theta}\|W_{0}\eta\|_{{\bf L}^{2}(\mathcal{F})}\right)^{\frac{1}{1+\theta}}\left(\|W_{0}\eta\|_{{\bf H}^{2+2\theta}(\mathcal{F})}\right)^{\frac{\theta}{1+\theta}}
\\
\leq C\left(\|W_{0}\eta\|_{{\bf H}^{2+2\theta}(\mathcal{F})}+|\lambda|^{1+\theta}\|W_{0}\eta\|_{{\bf L}^{2}(\mathcal{F})}\right)
\leq C\left(\|A_1^{\theta/2+3/8}\eta\|_{\mathcal{H}_{\mathcal{S}}}+|\lambda|^{1+\theta}\|A_1^{-1/8}\eta\|_{\mathcal{H}_{\mathcal{S}}}\right),
\end{multline*}
and
\begin{multline*}
|\lambda|\|W_{0}\eta\|_{{\bf H}^{2\theta}(\mathcal{F})}\leq  C\left(|\lambda|^{1+\theta-1/4+\varepsilon}\|W_{0}\eta\|_{{\bf H}^{1/2-2\varepsilon}(\mathcal{F})}\right)^{\frac{1}{1+\theta-1/4+\varepsilon}}\left(\|W_{0}\eta\|_{{\bf H}^{2+2\theta}(\mathcal{F})}\right)^{\frac{\theta-1/4+\varepsilon}{1+\theta-1/4+\varepsilon}}\\
\leq C\left(\|W_{0}\eta\|_{{\bf H}^{2+2\theta}(\mathcal{F})}+|\lambda|^{3/4+\varepsilon+\theta}\|W_{0}\eta\|_{{\bf H}^{1/2-2\varepsilon}(\mathcal{F})}\right)
\leq C\left(\|A_1^{\theta/2+3/8}\eta\|_{\mathcal{H}_{\mathcal{S}}}+|\lambda|^{3/4+\varepsilon+\theta}\|\eta\|_{\mathcal{H}_{\mathcal{S}}}\right).
\end{multline*}
Combining the above estimates with \eqref{np0006} and \eqref{np0007}, we deduce \eqref{eq23-06-2017-1Bis} and \eqref{eq23-06-2017-2Bis}.
\end{proof}

\begin{Proposition}\label{P03}
Let $\theta\in [1/4,1/2)$ and $\lambda \in \mathbb{C}^+_0$, then
\begin{equation} \label{eq25112018}
\|A_1^{\theta/2} K_{\lambda}\eta\|_{\mathcal{H}_S}\leq C\|A_1^{\theta/2-1/4} \eta\|_{\mathcal{H}_S}.
\end{equation}
Let $\varepsilon\in (0,1/4)$, $\lambda \in \mathbb{C}^+_0$ and $\theta\in [1/2,2]$, then 
\begin{equation} \label{eq25112018-1}
\|A_1^{\theta/2} K_{\lambda}\eta\|_{\mathcal{H}_S}
	\leq C\left(\|A_1^{\theta/2-1/4} \eta\|_{\mathcal{H}_S}+|\lambda|^{\theta-1/2+\varepsilon}\|\eta\|_{\mathcal{H}_S}\right).
\end{equation}
\end{Proposition}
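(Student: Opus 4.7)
I would represent $K_\lambda\eta$ by the boundary trace of the adjoint Stokes problem via \eqref{DefK}: $K_\lambda\eta=-\Lambda^\ast\{\mathbb{T}(\varphi_\eta,\pi_\eta)n_{|\partial\mathcal{F}}\}$, where $(\varphi_\eta,\pi_\eta)$ solves \eqref{gev1.3DefK} with right-hand side $W_\lambda\eta$. Since $2\theta\in[1/2,4]$ lies in the admissible range of \eqref{gev9.5}, one has
\begin{equation*}
\|A_1^{\theta/2}K_\lambda\eta\|_{\mathcal{H}_{\mathcal{S}}}\leq C\|\mathbb{T}(\varphi_\eta,\pi_\eta)n\|_{\mathbf{H}^{2\theta}(\partial\mathcal{F})},
\end{equation*}
and by the trace theorem (valid since $\partial\mathcal{F}$ is smooth enough under the hypothesis $\eta_1^0\in W^{7,\infty}(\mathcal{I})$ of \cref{T04}), this is in turn bounded by $\|\varphi_\eta\|_{\mathbf{H}^{2\theta+3/2}(\mathcal{F})}+\|\pi_\eta\|_{H^{2\theta+1/2}(\mathcal{F})}$. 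The entire proof then reduces to applying the Stokes resolvent estimate of \cref{prop23novBis} (to \eqref{gev1.3DefK}) with $\theta'=\theta-1/4$, so that $2+2\theta'=2\theta+3/2$, and closing the loop with the bounds of \cref{P01} on $W_\lambda\eta$.

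For the first case $\theta\in[1/4,1/2)$, take $\gamma=\theta'=\theta-1/4\in[0,1/4)$. Then the two terms in \eqref{EstStokeslambdanh23nov} coalesce and yield $\|\varphi_\eta\|_{\mathbf{H}^{2\theta+3/2}}+\|\pi_\eta\|_{H^{2\theta+1/2}}\leq C\|W_\lambda\eta\|_{\mathbf{H}^{2\theta-1/2}}$. Since $\theta-1/4<1/4$, \eqref{eq23-06-2017-1-0} applies and gives $\|W_\lambda\eta\|_{\mathbf{H}^{2\theta-1/2}}\leq C\|A_1^{\theta/2-1/4}\eta\|_{\mathcal{H}_{\mathcal{S}}}$, proving \eqref{eq25112018}.

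For the second case $\theta\in[1/2,2]$, the choice $\gamma=\theta'=\theta-1/4$ is inadmissible since $\theta-1/4\geq 1/4$, so I would instead pick $\gamma=1/4-\varepsilon$ (with the $\varepsilon$ of the statement). Then the $|\lambda|$-prefactor becomes $|\lambda|^{\theta'-\gamma}=|\lambda|^{\theta-1/2+\varepsilon}$, and \eqref{EstStokeslambdanh23nov} produces
\begin{equation*}
\|\varphi_\eta\|_{\mathbf{H}^{2\theta+3/2}}+\|\pi_\eta\|_{H^{2\theta+1/2}}\leq C\Bigl(|\lambda|^{\theta-1/2+\varepsilon}\|W_\lambda\eta\|_{\mathbf{H}^{1/2-2\varepsilon}}+\|W_\lambda\eta\|_{\mathbf{H}^{2\theta-1/2}}\Bigr).
\end{equation*}
The first norm is bounded by $C\|\eta\|_{\mathcal{H}_{\mathcal{S}}}$ via \eqref{eq23-06-2017-1-0} (since $1/4-\varepsilon<1/4$), while the second is bounded by $C(\|A_1^{\theta/2-1/4}\eta\|_{\mathcal{H}_{\mathcal{S}}}+|\lambda|^{\theta-1/2+\varepsilon}\|\eta\|_{\mathcal{H}_{\mathcal{S}}})$ via \eqref{eq23-06-2017-2} applied with exponent $\theta-1/4\geq 1/4-\varepsilon$ (and the same $\varepsilon$). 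Combining these and inserting into the chain of trace estimates yields \eqref{eq25112018-1}.

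\paragraph{Main obstacle.} The delicate point is the case $\theta\in[1/2,2]$: one must fine-tune $\gamma$ precisely to the upper limit $1/4-\varepsilon$ of the admissible range of \cref{prop23novBis}, since any smaller $\gamma$ generates a strictly worse $|\lambda|^{\theta-1/4-\gamma}$ exponent. A secondary technicality is that for $\theta>5/4$ one applies \cref{prop23novBis} with $\theta'=\theta-1/4>1$, beyond its stated range; this is harmless since the classical Stokes estimate used in its proof extends to higher exponents under the boundary regularity $\eta_1^0\in W^{7,\infty}(\mathcal{I})$ assumed in \cref{T04}, through iterated elliptic regularity for \eqref{gev1.3DefK}.
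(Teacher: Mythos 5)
Your proof follows the paper's argument step by step: the same trace inequality bounding $\|A_1^{\theta/2}K_\lambda\eta\|_{\mathcal{H}_S}$ by $\|\varphi_\eta\|_{\mathbf{H}^{3/2+2\theta}(\mathcal{F})}+\|\pi_\eta\|_{H^{1/2+2\theta}(\mathcal{F})}$, the same application of \cref{prop23novBis} with $\gamma=\theta-1/4$ when $\theta\in[1/4,1/2)$ and $\gamma=1/4-\varepsilon$ when $\theta\in[1/2,2]$, and the same closing step via \eqref{eq23-06-2017-1-0} and \eqref{eq23-06-2017-2}. Your remark that for $\theta>5/4$ one implicitly applies \cref{prop23novBis} with $\theta'=\theta-1/4>1$ beyond its stated range is a sharp observation that the paper leaves tacit, and your justification (iterated elliptic regularity under the boundary smoothness available here) is the right way to fill it.
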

\begin{proof}
From \eqref{DefK} and properties on the trace operator and on $\Lambda^*$, 
\begin{equation}\label{np0008}
\|A_1^{\theta/2} K_{\lambda}\eta\|_{\mathcal{H}_S}\leq C\left(\|\varphi_\eta\|_{{\bf H}^{3/2+2\theta}(\mathcal{F})}+\|\pi_\eta\|_{H^{1/2+2\theta}(\mathcal{F})}\right).
\end{equation}
Using \eqref{gev1.3DefK}, \eqref{EstStokeslambdanh23nov} and \eqref{eq23-06-2017-1-0}, we deduce from the above estimate that
$$
\|A_1^{\theta/2} K_{\lambda}\eta\|_{\mathcal{H}_S}
\leq 	 C \|W_{\lambda}\eta\|_{{\bf H}^{2\theta-1/2}(\mathcal{F})}\leq C \|A_1^{\theta/2-1/4}\eta\|_{ \mathcal{H}_S}
$$
if $\theta\in [1/4,1/2)$.

On the other hand, if $\theta\in [1/2,2]$, using \eqref{gev1.3DefK}, \eqref{EstStokeslambdanh23nov}, \eqref{eq23-06-2017-1-0} and \eqref{eq23-06-2017-2}, we deduce from \eqref{np0008} that
\begin{multline*}
\|A_1^{\theta/2} K_{\lambda}\eta\|_{\mathcal{H}_S}
\leq C\left(|\lambda|^{\theta-1/2+\varepsilon} \|W_{\lambda}\eta\|_{{\bf H}^{1/2-2\varepsilon}(\mathcal{F})}+\|W_{\lambda}\eta\|_{{\bf H}^{2\theta-1/2}(\mathcal{F})}\right)
\\
\leq C\left(|\lambda|^{\theta-1/2+\varepsilon} \|\eta\|_{\mathcal{H}_S}+\|A_1^{\theta/2-1/4}\eta\|_{\mathcal{H}_S}\right).
\end{multline*}
\end{proof}

\begin{Proposition}\label{P02}
Assume $\alpha>0$. 
Let $\theta\in (-1/2,1/2)$, then
\begin{equation} \label{eq25112018-2}
\|A_1^{\theta/2} (I+K_{\lambda})^{-1}\eta\|_{\mathcal{H}_S}\leq C\|A_1^{\theta/2} \eta\|_{\mathcal{H}_S}
\quad (\lambda \in \mathbb{C}^+).
\end{equation}
Let $\varepsilon>0$ and $\theta\in [1/2,2]$, then 
\begin{equation} \label{eq25112018-4}
\|A_1^{\theta/2} (I+K_{\lambda})^{-1}\eta\|_{\mathcal{H}_S}\leq C\left(\|A_1^{\theta/2} \eta\|_{\mathcal{H}_S}+|\lambda|^{\theta-1/2+\varepsilon}\|\eta\|_{\mathcal{H}_S}\right)
\quad (\lambda \in \mathbb{C}^+_\alpha).
\end{equation}
\end{Proposition}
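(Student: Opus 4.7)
The starting point is \cref{P04}, which guarantees that $K_{\lambda}$ extends to a bounded, positive, self-adjoint operator on $\mathcal{H}_{\mathcal{S}}$. Hence $I+K_{\lambda}\geq I$ and $S_{\lambda}\ov (I+K_{\lambda})^{-1}$ is a self-adjoint contraction of $\mathcal{H}_{\mathcal{S}}$, which settles \eqref{eq25112018-2} for $\theta=0$ with $C=1$. For $\theta\in(-1/2,0)$, the self-adjointness of $S_{\lambda}$ yields \eqref{eq25112018-2} by duality: since
\[
\|A_{1}^{\theta/2}S_{\lambda}\eta\|_{\mathcal{H}_{\mathcal{S}}}=\sup_{\|\zeta\|_{\mathcal{D}(A_{1}^{-\theta/2})}=1}\bigl|\langle \eta, S_{\lambda}\zeta\rangle_{\mathcal{H}_{\mathcal{S}}}\bigr|\leq \|A_{1}^{\theta/2}\eta\|_{\mathcal{H}_{\mathcal{S}}}\,\|S_{\lambda}\zeta\|_{\mathcal{D}(A_{1}^{-\theta/2})},
\]
the estimate at exponent $-\theta\in(0,1/2)$ propagates to $\theta$. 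It therefore suffices to prove \eqref{eq25112018-2} for $\theta\in(0,1/2)$, and then \eqref{eq25112018-4} for $\theta\in[1/2,2]$.

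For $\theta\in(0,1/2)$ I set $u=S_{\lambda}\eta$, so that $u+K_{\lambda}u=\eta$. Reducing by density to $\eta$ in a regular subspace (so that the pairing below makes sense) and testing against $A_{1}^{\theta}u$ gives
\[
\|A_{1}^{\theta/2}u\|_{\mathcal{H}_{\mathcal{S}}}^{2}+\langle A_{1}^{\theta/2}K_{\lambda}u, A_{1}^{\theta/2}u\rangle_{\mathcal{H}_{\mathcal{S}}}=\langle A_{1}^{\theta/2}\eta, A_{1}^{\theta/2}u\rangle_{\mathcal{H}_{\mathcal{S}}}.
\]
The key is to show that the $K_{\lambda}$-term is dominated by $C\|u\|\,\|A_{1}^{\theta/2}u\|$ with $C$ independent of $\lambda$. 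If $\theta\in(0,1/4]$, the Cauchy-Schwarz inequality for the positive form $\langle K_{\lambda}\cdot,\cdot\rangle$ together with the upper bound in \cref{P04} yields
\[
|\langle K_{\lambda}u, A_{1}^{\theta}u\rangle|\leq C\|A_{1}^{-1/8}u\|\,\|A_{1}^{\theta-1/8}u\|\leq C\|u\|\,\|A_{1}^{\theta/2}u\|,
\]
using that $-1/8\leq 0$ and $\theta-1/8\leq \theta/2$, together with the fact that $A_{1}\geq c I$ for some $c>0$ makes the norms $\|A_{1}^{\sigma}v\|$ monotone in $\sigma$. If $\theta\in(1/4,1/2)$, the first bound of \cref{P03} gives $\|A_{1}^{\theta/2}K_{\lambda}u\|\leq C\|A_{1}^{\theta/2-1/4}u\|\leq C\|u\|$ (since $\theta/2-1/4\leq 0$), and the same conclusion follows. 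Dividing the energy identity by $\|A_{1}^{\theta/2}u\|$ and using $\|u\|\leq \|\eta\|\leq C\|A_{1}^{\theta/2}\eta\|$ produces \eqref{eq25112018-2}.

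For $\theta\in[1/2,2]$ and $\lambda\in\mathbb{C}_{\alpha}^{+}$, the quantity $\|A_{1}^{\theta/2-1/4}u\|$ can no longer be bounded by $\|u\|$, so the previous argument must be modified. Starting from $u=\eta-K_{\lambda}u$ and applying the second bound of \cref{P03}, I obtain
\[
\|A_{1}^{\theta/2}u\|\leq \|A_{1}^{\theta/2}\eta\|+C\|A_{1}^{\theta/2-1/4}u\|+C|\lambda|^{\theta-1/2+\varepsilon}\|u\|.
\]
The moment interpolation inequality $\|A_{1}^{\theta/2-1/4}u\|\leq C\|u\|^{1/(2\theta)}\|A_{1}^{\theta/2}u\|^{1-1/(2\theta)}$ combined with Young's inequality absorbs $\tfrac{1}{2}\|A_{1}^{\theta/2}u\|$ into the left-hand side at the cost of an extra $C\|u\|$ term. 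Finally, $|\lambda|\geq \alpha>0$ and $\theta-1/2+\varepsilon>0$ give $1\leq C_{\alpha}|\lambda|^{\theta-1/2+\varepsilon}$, so combining with $\|u\|\leq\|\eta\|$ yields \eqref{eq25112018-4}. The main delicate point is the consistent choice of exponents in the absorption steps, so that only the estimates available from \cref{P04} and \cref{P03}, rather than finer mapping properties of $K_{\lambda}$, are needed.
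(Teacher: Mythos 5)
Your proof is correct, but it departs from the paper's in two places. For the range $\theta\in(0,1/2)$, the paper first establishes \eqref{eq25112018-2} on $[1/4,1/2)$ by the direct bound $\|A_1^{\theta/2}K_\lambda\zeta\|\leq C\|A_1^{\theta/2-1/4}\zeta\|\leq C\|\zeta\|$ from \cref{P03}, and then obtains $[0,1/4)$ by \emph{interpolating} this with the trivial $\theta=0$ contraction estimate; you instead handle $(0,1/4]$ directly via Cauchy--Schwarz for the positive sesquilinear form $\langle K_\lambda\cdot,\cdot\rangle$ together with the upper bound in \eqref{eq28-11-1b}, so no interpolation is needed. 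The more substantial difference is for $\theta\in[1/2,2]$: the paper runs a finite \emph{bootstrap} in steps of $1/2$ (first $[1/2,1)$ using the already-proved bound on $[0,1/2)$, then $[1,3/2)$, etc., absorbing the extra $|\lambda|^{-1/2}$ each time thanks to $|\lambda|\geq\alpha$), whereas you treat the whole range in one pass by combining \eqref{eq25112018-1} with the moment inequality $\|A_1^{\theta/2-1/4}u\|\leq\|u\|^{1/(2\theta)}\|A_1^{\theta/2}u\|^{1-1/(2\theta)}$ and a Young absorption (at $\theta=1/2$ the exponent degenerates and the term is exactly $\|u\|$, so no absorption is even needed). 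Both routes are valid: the paper's bootstrap is more elementary and generalizes more mechanically if the admissible range of $\theta$ were extended, while your one-step absorption is shorter and more self-contained. The auxiliary points you raise (density reduction, $A_1\geq cI$ making $\sigma\mapsto\|A_1^\sigma v\|$ monotone, $|\lambda|\geq\alpha$ controlling the residual constant, and the innocuous division by $\|A_1^{\theta/2}u\|$) are all handled correctly.
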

\begin{proof}
We write
$$
\zeta=(I+K_{\lambda})^{-1}\eta, \quad \zeta=\eta-K_{\lambda}\zeta.
$$
First, using the positivity of $K_{\lambda}$ stated in \eqref{eq28-11-1b}, we find
\begin{equation}\label{np0011}
\|\zeta\|_{\mathcal{H}_S}\leq \|\eta\|_{\mathcal{H}_S}.
\end{equation}
Moreover, we have the relation
\begin{equation}\label{np0010}
\|A_1^{\theta/2} \zeta\|_{\mathcal{H}_S}\leq \|A_1^{\theta/2} \eta\|_{\mathcal{H}_S}+\|A_1^{\theta/2} K_{\lambda}\zeta\|_{\mathcal{H}_S}.
\end{equation}
Assume first $\theta\in [1/4,1/2)$. Then combining \eqref{eq25112018} and \eqref{np0011} with \eqref{np0010}, we deduce \eqref{eq25112018-2}.
Interpolating \eqref{eq25112018-2} and \eqref{np0011}, we also deduce that \eqref{eq25112018-2} holds for $\theta \in [0,1/2)$. To prove \eqref{eq25112018-2} for $\theta \in (-1/2,0)$ we use a duality argument. First, for $\theta\in (0,1/2)$ \eqref{eq25112018-2} can be rewritten as
$$
\|A_1^{\theta/2} (I+K_{\lambda})^{-1}A_1^{-\theta/2}\|_{\mathcal{L}(\mathcal{H}_S)}<+\infty.
$$
Thus, noticing that $(A_1^{\theta/2} (I+K_{\lambda})^{-1}A_1^{-\theta/2})^*=A_1^{-\theta/2}(I+K_{\lambda})^{-1}A_1^{\theta/2}$ we also have
$$
\|A_1^{-\theta/2}(I+K_{\lambda})^{-1} A_1^{\theta/2}\|_{\mathcal{L}(\mathcal{H}_S)}<+\infty,
$$
which is equivalent to \eqref{eq25112018-2} for $\theta\in (-1/2,0)$.

Next, we assume $\theta\in [1/2,2]$ and $\varepsilon>0$. We deduce from  \eqref{np0010} and \eqref{eq25112018-1} that
\begin{equation}\label{np0012}
\|A_1^{\theta/2} \zeta\|_{\mathcal{H}_S}\leq \|A_1^{\theta/2} \eta\|_{\mathcal{H}_S}+C \left(\|A_1^{\theta/2-1/4}\zeta\|_{\mathcal{H}_S}+|\lambda|^{\theta-1/2+\varepsilon}\|\zeta\|_{\mathcal{H}_S}\right).
\end{equation}
If $\theta \in [1/2,1)$, then $\theta-1/2\in [0,1/2)$ and we can use \eqref{eq25112018-2} and \eqref{np0011} in the above relation to deduce \eqref{eq25112018-4}. If $\theta\in [1,3/2)$, then $\theta-1/2\in [1/2,1)$ and we can use \eqref{eq25112018-4} and  \eqref{np0011} in \eqref{np0012} to deduce
$$
\|A_1^{\theta/2} \zeta\|_{\mathcal{H}_S}\leq \|A_1^{\theta/2} \eta\|_{\mathcal{H}_S}+C \left(\|A_1^{\theta/2-1/4}\eta\|_{\mathcal{H}_S}+|\lambda|^{\theta-1+\varepsilon}\|\zeta\|_{\mathcal{H}_S}+|\lambda|^{\theta-1/2+\varepsilon}\|\zeta\|_{\mathcal{H}_S}\right).
$$
Since $|\lambda|>\alpha$ we have $|\lambda|^{\theta-1+\varepsilon}\leq \alpha^{-1/2}|\lambda|^{\theta-1/2+\varepsilon}$ and it yields \eqref{eq25112018-4}. We can then repeat the same argument for $\theta\in [3/2,2)$ and $\theta=2$.
\end{proof}

\section{Commutator estimates}\label{sec_com}
The aim of this section is to show the following result:
\begin{Lemma}\label{2017Commutateur1}
Assume $\eta_1^0\in W^{7,\infty}(\mathcal{I})$.
For $\varepsilon\in (0,1/4)$, there exists a constant $C>0$ such that for any $\lambda \in \mathbb{C}^+_0$, 
$$
\|[A_1^{3/8}, K_{\lambda}]\eta\|_{\mathcal{H}_{\mathcal{S}}} \leq C( |\lambda|^{-1}\|A^{1/2+\varepsilon}\eta\|_{\mathcal{H}_{\mathcal{S}}}+|\lambda|^{\varepsilon}\|\eta\|_{\mathcal{H}_{\mathcal{S}}}).
$$
\end{Lemma}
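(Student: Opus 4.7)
The plan is to exploit that $K_\lambda$ would commute with every function of $A_1$ in the flat, translation-invariant case $\eta_1^0 \equiv 0$; the commutator therefore measures the deviation from flatness, and is naturally controlled by derivatives of $\eta_1^0$ together with the usual smoothing properties of the Stokes resolvent.

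First, I would use the Balakrishnan resolvent representation
$$A_1^{3/8}\zeta = \frac{\sin(3\pi/8)}{\pi}\int_0^\infty t^{-5/8}\,A_1(tI+A_1)^{-1}\zeta\,dt$$
together with the algebraic identity $[A_1(tI+A_1)^{-1}, K_\lambda] = -t\,(tI+A_1)^{-1}[K_\lambda,A_1](tI+A_1)^{-1}$ to rewrite
$$[A_1^{3/8},K_\lambda] = -\frac{\sin(3\pi/8)}{\pi}\int_0^\infty t^{3/8}\,(tI+A_1)^{-1}[K_\lambda,A_1](tI+A_1)^{-1}\,dt.$$
This reduces the problem to estimating the base commutator $[K_\lambda,A_1]$ in weighted norms (sandwiched between $(tI+A_1)^{-1}$), and then integrating against $t^{3/8}$ with the aid of the standard bound $\|(tI+A_1)^{-1}\|_{\mathcal{L}(\mathcal{H}_{\mathcal{S}},\mathcal{D}(A_1^\beta))}\lesssim t^{\beta-1}$ for $\beta\in[0,1]$.

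Next, to handle $[K_\lambda,A_1]$ itself, I would go back to the Stokes representation \eqref{DefK}--\eqref{gev1.3DefK}: $K_\lambda\eta = -\Lambda^*\{\mathbb{T}(\varphi_\eta,\pi_\eta)n_{|\partial\mathcal F}\}$, where $\varphi_\eta$ solves Stokes with source $W_\lambda\eta$. Differentiating this system in $s$ up to four times (and twice for the $\partial_s^2$ part of $A_1$) produces a new Stokes system for the "commuted" unknown, whose source and boundary data involve tangential derivatives of $\varphi_\eta$, $\pi_\eta$ and $W_\lambda\eta$, with coefficients built out of derivatives of $\eta_1^0$ up to order seven — this is exactly where the $W^{7,\infty}$ hypothesis is used. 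The new system can then be estimated using \cref{prop23novBis}, \cref{P01} and \cref{P03}: each resulting term is either an $A_1^\beta$-norm of $\eta$, accompanied by a power of $|\lambda|$ coming from the Stokes resolvent on $\varphi_\eta$, or a coarser norm of $\eta$ paid for by a larger power $|\lambda|^{\theta-1/2+\varepsilon}$ as in \eqref{eq23-06-2017-2}, \eqref{eq25112018-1}. Inserting the resulting weighted bound for $[K_\lambda,A_1]$ into the integral representation and computing the $t$-integral yields exactly the two-term right-hand side of the statement.

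The main obstacle is the bookkeeping of the commutator terms generated by four differentiations of a Stokes system whose boundary depends on $\eta_1^0$: several pairings between $W_\lambda$, $\varphi_\eta$, $\Lambda$, $\Lambda^*$, and the normal $n$ must be tracked, and each must be estimated with the sharpest available power of $|\lambda|$. In particular, the factor $|\lambda|^{-1}$ in front of $\|A_1^{1/2+\varepsilon}\eta\|_{\mathcal{H}_{\mathcal{S}}}$ reflects the parabolic smoothing gain on $\varphi_\eta$ inherited from \cref{prop23novBis}, while the factor $|\lambda|^{\varepsilon}$ in front of $\|\eta\|_{\mathcal{H}_{\mathcal{S}}}$ is the Gevrey-type loss captured by the parameter $\varepsilon$ in \cref{P01}--\cref{P03}. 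Once these two competing scales are identified and the $t$-integral is split at $t\sim|\lambda|^2$ (or similar), convergence at both endpoints of the Balakrishnan integral follows from the inequalities $\|(tI+A_1)^{-1}\|\lesssim t^{-1}$ and the integrability of $t^{3/8}\omega(t,\lambda)$.
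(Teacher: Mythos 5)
Your overall strategy — Balakrishnan to reduce to a ``base'' commutator, then a Stokes computation for that base commutator — is in the same spirit as the paper, but the place where you apply the Balakrishnan formula is genuinely different, and that difference is where I see a gap.

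The paper never applies Balakrishnan to $K_\lambda$ itself. It first transforms the Stokes system to the flat strip $\mathcal F_0$ (\cref{toflat}), which makes $K_\lambda\eta = -M(\nu\mathbb D\widetilde\varphi_\eta(\cdot,1)-\widetilde\pi_\eta(\cdot,1))$ with $\mathbb D,\mathbb L,\mathbb G$ \emph{local tangential} differential operators with coefficients depending on $\eta_1^0$. It then commutes $A_1^{3/8}$ through this representation: the commutators $[A_1^{3/8}M,\mathbb D]$, $[A_1^{3/8}M,\mathbb L]$, $[A_1^{3/8}M,\mathbb G]$ are handled by \cref{Pcom} (which is where the Balakrishnan formula is used, and where the one-derivative Leibniz gain $[A_1M,\mathbb B]\in\mathcal O_{\alpha+3}^{\beta+4}$ makes the $t$-integral converge cleanly and uniformly); and the failure of $A_1^{3/8}$ to commute with the Stokes \emph{solution} operators is captured by the shifted systems \eqref{gev3.9Tilde-diff}--\eqref{gev0.6Tilde-diff} for $\widetilde\varphi$, $\widetilde w$, to which the $\lambda$-dependent resolvent bounds of \cref{prop23novBis} and \cref{P01} are applied directly. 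This separation is crucial: all $\lambda$-powers come from the Stokes resolvent on the \emph{already-commuted} quantities, and Balakrishnan is only ever applied to $\lambda$-independent local operators, so the $t$-integral and the $\lambda$-asymptotics never interfere.

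Your plan instead applies Balakrishnan at the top level to produce $\int_0^\infty t^{3/8}(tI+A_1)^{-1}[K_\lambda,A_1](tI+A_1)^{-1}\,dt$, and then estimates $[K_\lambda,A_1]$ via the Stokes system. Two concrete problems arise. First, you ``differentiate the Stokes system in $s$ up to four times'' without first flattening the domain; in $\mathcal F=\mathcal F_{\eta_1^0}$ the tangential derivative $\partial_s$ is not compatible with the curved boundary, so $\partial_s\varphi_\eta$ does not solve a tractable problem — the flattening change of variables \eqref{gev8.0}--\eqref{gev8.1}, which turns $\partial_s$ into $\partial_{y_1}$ acting on $\mathcal F_0=\mathcal I\times(0,1)$, is a necessary first step, not an implementation detail. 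Second, and more seriously, your bound on $[K_\lambda,A_1]$ will be a two-term $\lambda$-dependent estimate of the form $\|[K_\lambda,A_1]\zeta\|\lesssim |\lambda|^{-1}\|A_1^{\mu}\zeta\|+|\lambda|^{\varepsilon}\|A_1^{\mu'}\zeta\|$; after sandwiching with $(tI+A_1)^{-1}$ and weighting by $t^{3/8}$, the $t$-integral must converge \emph{and} return exactly the pair $(|\lambda|^{-1}\|A_1^{1/2+\varepsilon}\eta\|,\,|\lambda|^{\varepsilon}\|\eta\|)$, with the $t$-cutoff and the $\lambda$-powers nontrivially coupled. You gesture at this (``split at $t\sim|\lambda|^2$'') but it is not a routine computation: convergence of $\int t^{3/8}\|(tI+A_1)^{-1}\|\,\|A_1^{\mu-\beta}(tI+A_1)^{-1}\|\,dt$ puts restrictive upper bounds on $\mu$ and $\mu'$, and whether the Stokes-derived bound on $[K_\lambda,A_1]$ actually lands in that admissible range, with the correct $\varepsilon$-trade-off of \eqref{eq23-06-2017-2}, is precisely the hard part. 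The paper's decomposition is designed to avoid ever having to verify such a coupled $t$--$\lambda$ estimate; your plan re-introduces it without resolving it.
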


Here we have denoted by $[A,B]$ the commutator of $A$ and $B$: $[A,B]=AB-BA$.

\subsection{The system written in a domain with a flat boundary}\label{toflat}
We transform the systems \eqref{gev0.6}  and \eqref{gev1.3DefK}
written in $\mathcal{F}=\mathcal{F}_{\eta_1^0}$ into systems written in the domain 
$$
\mathcal{F}_{0}=\mathcal{I}\times (0,1).
$$
We use the change of variables
$$
\widetilde{X}  : \mathcal{F}_{0} \to \mathcal{F}, \quad (y_1,y_2)\mapsto \left(y_1, y_2(1+\eta_1^0(y_1))\right),
$$
$$
\widetilde{Y} : \mathcal{F} \to \mathcal{F}_{0}, \quad (x_1,x_2)\mapsto \left(x_1, \frac{x_2}{1+\eta_1^0(x_1)}\right).
$$

We write 
$$
\widetilde a\ov \Cof(\nabla \widetilde{Y})^*, \quad \widetilde b\ov \Cof(\nabla \widetilde{X})^*.
$$

We set
\begin{equation}\label{gev8.0}
\widetilde{w}(y)\ov \widetilde b(y) w(\widetilde{X}(y))\quad \mbox{ and }\quad
\widetilde{q}(y)\ov q(\widetilde{X}(y)),
\end{equation}
so that
\begin{equation}\label{gev8.1}
w(x)=\widetilde a(x) \widetilde{w}(\widetilde{Y}(x)) \quad \text{and} \quad 
q(x)=\widetilde{q}(\widetilde Y(x)).
\end{equation}
We set
\begin{multline}\label{gev8.5}
\left[\mathbb{L} \widetilde{w} \right]_\alpha
\ov \sum_{i,j,k} \widetilde b_{\alpha i} \frac{\partial^2 \widetilde a_{ik}}{\partial x_j^2}(\widetilde X) \widetilde{w}_k
+2\sum_{i,j,k,\ell} \widetilde b_{\alpha i} \frac{\partial \widetilde a_{ik}}{\partial x_j}(\widetilde X) \frac{\partial \widetilde{w}_k}{\partial y_\ell} \frac{\partial \widetilde Y_\ell}{\partial x_j}(\widetilde X)
\\
+\sum_{j,\ell,m}  \frac{\partial^2 \widetilde{w}_\alpha}{\partial y_\ell\partial y_m} \frac{\partial \widetilde Y_\ell}{\partial x_j}(\widetilde X)\frac{\partial \widetilde Y_m}{\partial x_j}(\widetilde X)
+\sum_{j,\ell} \frac{\partial \widetilde{w}_\alpha}{\partial y_\ell} \frac{\partial^2 \widetilde Y_\ell}{\partial x_j^2}(\widetilde X),
\end{multline}
\begin{equation}\label{gev8.9}
\left[\mathbb{G} \widetilde{q} \right]_\alpha
\ov \det(\nabla \widetilde X) \sum_{k,i} \frac{\partial \widetilde{q}}{\partial y_k} \frac{\partial \widetilde Y_\alpha}{\partial x_i}(\widetilde X) \frac{\partial \widetilde Y_k}{\partial x_i}(\widetilde X).
\end{equation}
Then some calculation yields 
\begin{equation}\label{formb}
\left[\widetilde b\Delta w(\widetilde X)\right]_\alpha=\left[\mathbb{L} \widetilde{w} \right]_\alpha,
\quad
\left[\widetilde b\nabla q(\widetilde X)\right]_\alpha=\left[\mathbb{G} \widetilde{q} \right]_\alpha.
\end{equation}
We recall the derivation of \eqref{formb} in \cref{sec_formulas}.

We now consider systems \eqref{gev0.6}  and \eqref{gev1.3DefK} and using the change of variables \eqref{gev8.0}, we introduce the new states
$$
\widetilde{w}_\eta \ov \widetilde b (w_\eta \circ \widetilde{X}), \quad \widetilde{q}_\eta\ov q_\eta\circ \widetilde{X},
$$
$$
\widetilde{\varphi}_\eta \ov \widetilde b(\varphi_\eta\circ \widetilde{X}), \quad \widetilde{\pi}_\eta \ov \pi_\eta\circ \widetilde{X}.
$$

From the above relations, systems \eqref{gev0.6}  and \eqref{gev1.3DefK} are transformed into the following systems
\begin{equation}\label{gev0.6Tilde}
\left\{\begin{array}{c}
\lambda \widetilde w_\eta -\nu\mathbb{L}\widetilde w_\eta + \mathbb{G} \widetilde{q}_\eta = 0 \quad \text{in} \ \mathcal{F}_{0},\\
\div  \widetilde w_\eta =   0  \quad \text{in} \ \mathcal{F}_{0},\\
\widetilde w_\eta = \Lambda_0 \eta  \quad \text{on} \  \partial \mathcal{F}_0
\end{array}\right.
\end{equation}
and
\begin{equation}\label{gev3.9Tilde}
\left\{\begin{array}{c}
\overline{\lambda} \widetilde \varphi_\eta 
-\nu \mathbb{L}\widetilde \varphi_\eta + \mathbb{G} \widetilde \pi_\eta
	= \widetilde w_\eta\quad \text{in} \ \mathcal{F}_{0},\\
\div  \widetilde \varphi_\eta =   0  \quad \text{in} \ \mathcal{F}_{0},\\
 \widetilde \varphi_\eta= 0  \quad \text{on} \  \partial \mathcal{F}_0.
\end{array}\right.
\end{equation}

Here, we have also transformed the operator $\Lambda$ defined by \eqref{ws9.1} into the operator 
$\Lambda_0 : L^2(\mathcal{I}) \to {\bf L}^2(\partial \mathcal{F}_0)$ defined by
\begin{equation}\label{ws9.1-0}
\begin{cases}
(\Lambda_0 \eta)(s,1) &= \left(M \eta(s)\right) e_2\\
 (\Lambda_0 \eta)(s,0)&=0
 \end{cases},
 \quad s\in \mathcal{I}.
\end{equation}

From \eqref{DefK} and \eqref{ws9.3}, we have the following formula
\begin{equation}\label{DefK-long}
K_{\lambda}\eta=-M\left( \nu D(\varphi_\eta)(s, 1+\eta_1^0(s))\begin{bmatrix}
-\partial_s \eta_1^0(s)
\\
1
\end{bmatrix}
\cdot e_2
-\pi_\eta(s, 1+\eta_1^0(s))
\right).
\end{equation}
Thus
\begin{equation}\label{DefK-tilde}
K_{\lambda}\eta=-M\left(\nu \mathbb{D}\widetilde{\varphi_\eta}(s, 1)-\widetilde\pi_\eta(s, 1)
\right),
\end{equation}
with
\begin{multline}\label{np0015}
\mathbb{D}\widetilde{\varphi_\eta}=
\frac 12 (-\partial_s \eta_1^0)\left(\sum_k \frac{\partial \widetilde a_{2k}}{\partial x_1}(\widetilde X) (\widetilde{\varphi_\eta})_k+ \sum_{k,\ell}  \widetilde a_{2k}(\widetilde X) \frac{\partial (\widetilde{\varphi_\eta})_k}{\partial y_\ell} 
\frac{\partial \widetilde Y_\ell}{\partial x_1}(\widetilde X)
+ \sum_{k,\ell}  \widetilde a_{1k}(\widetilde X) \frac{\partial (\widetilde{\varphi_\eta})_k}{\partial y_\ell} \frac{\partial \widetilde Y_\ell}{\partial x_2}(\widetilde X)\right)
\\
+\sum_k \frac{\partial \widetilde a_{2k}}{\partial x_2}(\widetilde X) (\widetilde{\varphi_\eta})_k+ \sum_{k,\ell}  \widetilde a_{2k}(\widetilde X) \frac{\partial (\widetilde{\varphi_\eta})_k}{\partial y_\ell} \frac{\partial \widetilde Y_\ell}{\partial x_2}(\widetilde X).
\end{multline}

In what follows, we write the above operators by splitting the derivatives with respect to $y_1$ and $y_2$. More precisely, we
introduce the set $\mathcal{O}_{\alpha}^{\beta}$ $(\beta\geq \alpha)$ of operators of the form
\begin{equation}\label{DefDkl-tt}
f \mapsto \sum_{i\leq \alpha} c_{i}^{(\beta-i)} \partial_1^i f,
\end{equation}
where $c_{i}^{(k)}$ are functions of the form
\begin{equation}\label{DefDkl-tt2}
c_{i}^{(k)}=\widehat{c_i}(\eta_1^0(y_1),\dots,\partial_{s}^{k}\eta_1^0(y_1),y_2 ),
\end{equation}
with $\widehat{c_i}$ a smooth function and $k\in \mathbb{N}$. These operators are thus depending on $y_2$ but it can be seen as a parameter.

For instance, using \eqref{bea4.5}--\eqref{bea5.1}, we deduce
\begin{equation}\label{np0014}
\mathbb{D} f=\mathbb{D}^{(1)} f + \mathbb{D}^{(0)} \partial_2 f, \quad
\mathbb{L} f=\mathbb{L}^{(2)} f + \mathbb{L}^{(1)} \partial_2 f+ \mathbb{L}^{(0)} \partial_{2}^2 f,\quad
\mathbb{G} f=\mathbb{G}^{(1)} f + \mathbb{G}^{(0)} \partial_2 f, 
\end{equation}
where $\mathbb{L}^{(2)} \in \mathcal{O}_2^3$, $\mathbb{D}^{(1)}, \mathbb{L}^{(1)}, \mathbb{G}^{(1)}\in \mathcal{O}_1^2$, $\mathbb{D}^{(0)}, \mathbb{L}^{(0)}, \mathbb{G}^{(0)} \in \mathcal{O}_0^1$.

\subsection{Commutator estimate}
First we show the following result:
\begin{Proposition}\label{Pcom}
Assume $\mathbb{B}\in \mathcal{O}_{\alpha}^\beta$ and $\eta_1^0\in W^{4+\beta,\infty}(\mathcal{I})$. For any $\theta\in (0,1)$ and for any 
$s\geq 0$, if $s>4\theta +\alpha-1$, then there exists $C>0$ such that
\begin{equation}\label{np0030}
\left\| [A_1^\theta M,\mathbb{B}] f\right\|_{L^2(\mathcal{I})} \leq C  \|f \|_{H^{s}(\mathcal{I})} \quad (f\in H^{s}(\mathcal{I})).
\end{equation}
\end{Proposition}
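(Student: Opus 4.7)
The plan is to exploit the linear structure of $\mathbb{B}$ together with the Fourier-multiplier nature of $A_1^\theta M$ on $\mathcal{I}$ to reduce the estimate to a one-dimensional commutator bound. By \eqref{DefDkl-tt},
\begin{equation*}
\mathbb{B} f = \sum_{i\leq \alpha} c_{i}^{(\beta-i)}(y_1,y_2)\, \partial_1^i f,
\end{equation*}
with $y_2$ playing the role of a parameter. Since $A_1^\theta M$ is defined through the Fourier series in $y_1$, it commutes with every power of $\partial_1$, so
\begin{equation*}
[A_1^\theta M, \mathbb{B}]f = \sum_{i\leq \alpha} [A_1^\theta M,\, c_i^{(\beta-i)}]\, \partial_1^i f.
\end{equation*}

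The key step is the one-dimensional commutator estimate
\begin{equation*}
\|[A_1^\theta M,\, c]\, g\|_{L^2(\mathcal{I})} \leq C \|c\|_{W^{4,\infty}(\mathcal{I})} \|g\|_{H^{4\theta-1+\varepsilon}(\mathcal{I})}
\end{equation*}
valid for any $\varepsilon>0$. I would establish it by Fourier expansion: setting $\mu_k=(\alpha_1(2\pi k/L)^4+\alpha_2(2\pi k/L)^2)^\theta$ for $k\neq 0$ and $\mu_0=0$, the Fourier coefficients of the commutator read
\begin{equation*}
\widehat{[A_1^\theta M,c]g}(k) = \sum_{j} (\mu_k-\mu_{k-j})\, \widehat{c}(j)\, \widehat{g}(k-j).
\end{equation*}
Smoothness of $\xi \mapsto (\alpha_1\xi^4+\alpha_2\xi^2)^\theta$ away from the origin yields the symbol bound $|\mu_k-\mu_{k-j}|\leq C|j|(1+|k|+|k-j|)^{4\theta-1}$, from which a paraproduct or Littlewood-Paley split (separately in the regimes $|j|\ll |k|$ and $|j|\gtrsim|k|$) produces the advertised gain of one derivative, the factor $|j|$ being absorbed by one derivative of $c$.

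Combining these two steps with Faà di Bruno's formula applied to \eqref{DefDkl-tt2} controls $\|c_i^{(\beta-i)}(\cdot,y_2)\|_{W^{4,\infty}(\mathcal{I})}$ by a polynomial in $\|\eta_1^0\|_{W^{4+\beta-i,\infty}(\mathcal{I})}$, uniformly in $y_2$, and the hypothesis $\eta_1^0\in W^{4+\beta,\infty}(\mathcal{I})$ guarantees finiteness. Summing over $i$,
\begin{equation*}
\|[A_1^\theta M,\mathbb{B}]f\|_{L^2(\mathcal{I})} \leq C\sum_{i\leq \alpha} \|\partial_1^i f\|_{H^{4\theta-1+\varepsilon}(\mathcal{I})} \leq C\|f\|_{H^{4\theta+\alpha-1+\varepsilon}(\mathcal{I})},
\end{equation*}
and choosing $\varepsilon<s-(4\theta+\alpha-1)$ concludes.

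The main obstacle is the commutator estimate itself: one must extract precisely one derivative of gain from a non-polynomial Fourier multiplier whose order $4\theta\in(0,4)$ varies continuously with $\theta$. A paraproduct decomposition is the robust tool handling both the subcritical regime ($\theta$ small) and the near-integer regime ($\theta$ close to $1$) uniformly, and it pinpoints the minimal regularity needed on $c$ — essentially $W^{\lceil 4\theta\rceil,\infty}\subset W^{4,\infty}$ — which, via the chain rule for $c_i^{(\beta-i)}$, dictates the $W^{4+\beta,\infty}$ hypothesis on $\eta_1^0$. Once this estimate is in hand, the rest is bookkeeping.
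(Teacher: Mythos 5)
Your proof is essentially correct and takes a genuinely different route from the paper. The paper works entirely through the resolvent integral representation $A_1^\theta=\frac{1}{\Gamma(\theta)\Gamma(1-\theta)}\int_0^\infty t^{\theta-1}A_1(tI+A_1)^{-1}\,dt$: it first splits the commutator into four pieces using $I=M+(I-M)$, shows algebraically that $[A_1M,\mathbb{B}]\in\mathcal{O}_{\alpha+3}^{\beta+4}$ (a differential operator of order $\alpha+3$ in $\partial_1$ with coefficients involving $\partial_s^{\beta+4}\eta_1^0$), plugs this into the integral formula, and splits $\int_0^1$ from $\int_1^\infty$ to get the threshold $s>4\theta+\alpha-1$; the two rank-one pieces coming from $I-M$ are estimated directly. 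You instead diagonalize $A_1^\theta M$ as a Fourier multiplier $\mu_k$ on the circle (with $\mu_0=0$ absorbing the projection), use that it commutes with $\partial_1^i$ to reduce to $\sum_i[A_1^\theta M,c_i^{(\beta-i)}]\partial_1^i$, and establish the scalar commutator bound via the symbol estimate $|\mu_k-\mu_{k-j}|\lesssim|j|(1+|k|+|k-j|)^{4\theta-1}$ plus a paraproduct/Young-type convolution argument. Your formulation is conceptually cleaner — absorbing $M$ into $\mu_0=0$ avoids the four-term decomposition — and reaches the same thresholds $s>4\theta+\alpha-1$ and $\eta_1^0\in W^{4+\beta,\infty}$, but it replaces the paper's elementary operator-theoretic bookkeeping with a harmonic-analysis lemma whose proof you only sketch; the high-high interaction (output near frequency zero from $c$ and $g$ at comparable large frequency, where $|\mu_k-\mu_{k-j}|\sim|j|^{4\theta}$ and the gain must come from the $H^4$ tail decay of $\hat c$) is exactly the place where a careless reader could think $W^{4,\infty}$ is insufficient for $\theta$ close to $1$, so that regime deserves to be written out if this route is adopted. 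The paper's approach sidesteps such frequency case analysis entirely, at the cost of carrying the projection explicitly through four terms.
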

\begin{proof}
We write
$$
\mathbb{B}f=\sum_{i\leq \alpha} c_{i}^{(\beta-i)} \partial_1^i f.
$$
Then we recall the following formula (see for instance \cite[p. 98]{Triebel}):
$$
A_1^\theta=\frac{1}{\Gamma(\theta)\Gamma(1-\theta)} \int_0^\infty t^{\theta-1}A_1 (t I+A_1)^{-1} \ dt.
$$
In particular,
\begin{equation}\label{np0026}
[A_1^\theta M,\mathbb{B}]=A_1^\theta M\mathbb{B} - \mathbb{B}  A_1^\theta M
=A_1^\theta M\mathbb{B}M - M\mathbb{B}  A_1^\theta M-(I-M)\mathbb{B}  A_1^\theta M
+A_1^\theta M \mathbb{B}(I-M).
\end{equation}
By using the identity $A_1(t I+A_1)^{-1}=I-t (t I+A_1)^{-1}$ we deduce that
\begin{multline*}
A_1(t I+A_1)^{-1}M \mathbb{B}M-M \mathbb{B}A_1(t I+A_1)^{-1}M=-t (t I+A_1)^{-1} M \mathbb{B}M+t M \mathbb{B}(t I+A_1)^{-1}M\\
=t(t I+A_1)^{-1}(-M\mathbb{B}(tI+A_1)+(tI+A_1)M\mathbb{B}) (t I+A_1)^{-1}M=t(t I+A_1)^{-1}(-M\mathbb{B}A_1+A_1M\mathbb{B}) (t I+A_1)^{-1}M
\end{multline*}
and the first two terms in \eqref{np0026} give
$$
A_1^\theta M\mathbb{B}M - M\mathbb{B}  A_1^\theta M
=\frac{1}{\Gamma(\theta)\Gamma(1-\theta)} \int_0^\infty t^{\theta}  (t I+A_1)^{-1}M [A_1M,\mathbb{B}] (t I+A_1)^{-1}M\ dt.
$$
Moreover,
\begin{multline*}
[A_1M,\mathbb{B}] f
	=\left(\alpha_1 \partial_{1}^{4}  -\alpha_2 \partial_{1}^{2}\right) M \sum_{i\leq \alpha} c_{i}^{(\beta-i)} \partial_1^i f 
		-\sum_{i\leq \alpha} c_{i}^{(\beta-i)} \partial_1^i \left(\alpha_1 \partial_{1}^{4}  -\alpha_2 \partial_{1}^{2}\right) M f
	\\
	=\left(\alpha_1 \partial_{1}^{4}  -\alpha_2 \partial_{1}^{2}\right)  \sum_{i\leq \alpha} c_{i}^{(\beta-i)} \partial_1^i f 
		-\sum_{i\leq \alpha} c_{i}^{(\beta-i)} \partial_1^i \left(\alpha_1 \partial_{1}^{4}  -\alpha_2 \partial_{1}^{2}\right) f
\end{multline*}
and thus
\begin{equation}\label{EqMehCommutateur1}
[A_1M,\mathbb{B}]\in  \mathcal{O}_{\alpha+3}^{\beta+4}.
\end{equation}
We deduce that for $f\in H^{s}(\mathcal{I})$,
\begin{multline}\label{np0013}
\| (A_1^\theta M\mathbb{B}M - M\mathbb{B}  A_1^\theta M)f\|_{L^2(\mathcal{I})} 
\leq \frac{1}{\Gamma(\theta)\Gamma(1-\theta)} \int_0^\infty t^{\theta}  \|(t I+A_1)^{-1}M [A_1M,\mathbb{B}] (t I+A_1)^{-1}Mf \|_{L^2(\mathcal{I})}  \ dt
\\
\leq C \int_0^\infty t^{\theta-1}  \|  [A_1M,\mathbb{B}] (t I+A_1)^{-1}Mf \|_{L^2(\mathcal{I})}  \ dt
\leq C \int_0^\infty t^{\theta-1}  \| (t I+A_1)^{-1}Mf \|_{H^{\alpha+3}(\mathcal{I})} \ dt
\\
\leq C \left(\int_0^1 t^{\theta-1}  \| A_1(t I+A_1)^{-1}A_1^{\frac{\alpha-1}{4}}Mf \|_{L^2(\mathcal{I})} \ dt + \int_1^\infty t^{\theta-1} \| A_1^{\frac{\alpha+3-s}{4}}(t I+A_1)^{-1} A_1^{\frac{s}{4}}Mf \|_{L^2(\mathcal{I})} \ dt \right)\\
\leq C \left(\int_0^1 t^{\theta-1}  \| Mf \|_{H^{\alpha-1}(\mathcal{I})} \ dt + \int_1^\infty t^{\theta-1} \frac{1}{t^{1-(\alpha+3-s)/4}}  \| Mf \|_{H^{s}(\mathcal{I})} \ dt \right)
\leq C  \| M f \|_{H^{s}(\mathcal{I})}.
\end{multline}

For the third term in \eqref{np0026}, we write
\begin{multline}\label{np0028}
(I-M)\mathbb{B}  A_1^\theta Mf=\frac{1}{L} \int_0^L \left(\sum_{i\leq \alpha} c_{i}^{(\beta-i)} \partial_1^i A_1^\theta Mf\right) \ ds
= \frac{1}{L} \int_0^L  \left(\sum_{i\leq \alpha} (-1)^i  A_1^\theta M \partial_1^i c_{i}^{(\beta-i)}\right)  f \ ds\\
 \frac{1}{L} \int_0^L  A_1^{\theta-1}  \left(\sum_{i\leq \alpha} (-1)^i  M (\alpha_1\partial_1^{i+4}-\alpha_2 \partial_1^{i+2}) c_{i}^{(\beta-i)}\right)  f \ ds
\end{multline}
that yields
\begin{equation}\label{np0029}
\|(I-M)\mathbb{B}  A_1^\theta Mf\|_{L^2(\mathcal{I})}
\leq C\left(\sum_{i\leq \alpha} \|c_i^{(\beta-i)}\|_{H^{i+4}(\mathcal{I})}\right) \|f\|_{L^2(\mathcal{I})}.
\end{equation}

For the last term in \eqref{np0026}, we simply write
$$
\|(A_1^\theta M \mathbb{B})(I-M)f\|_{L^2(\mathcal{I})}
=\left\|(A_1^\theta M c_0^{(\beta)})\frac{1}{L} \int_0^L f \ ds \right\|_{L^2(\mathcal{I})} \leq C\|c_0^{(\beta)}\|_{H^{4\theta}(\mathcal{I})} \|f\|_{L^2(\mathcal{I})}.
$$

Combining this, \eqref{np0029} and \eqref{np0013}, we deduce the result.
\end{proof}

From \cref{Pcom} and \eqref{np0014}, we deduce in particular that if $\eta_1^0\in W^{7,\infty}(\mathcal{I})$ then for all $\varepsilon>0$ there exists $C=C(\varepsilon)>0$ such that 
\begin{equation}\label{np0032-0}
\left\|[A_1^{3/8} M, \mathbb{D}]f \right\|_{L^{2}(\mathcal{I})}\leq C \left\|f \right\|_{H^{3/2+\varepsilon}(\mathcal{I})}
\end{equation}
and
\begin{equation}\label{np0032}
\left\|[A_1^{3/8} M, \mathbb{L}]f \right\|_{{\bf L}^{2}(\mathcal{F}_0)}\leq C \left\|f \right\|_{{\bf H}^{5/2+\varepsilon}(\mathcal{F}_0)},
\quad
\left\| [A_1^{3/8} M, \mathbb{G}] f\right\|_{{\bf L}^{2}(\mathcal{F}_0)}
\leq C \left\|f \right\|_{{\bf H}^{3/2+\varepsilon}(\mathcal{F}_0)}.
\end{equation}

We are in position to prove \cref{2017Commutateur1}:
\begin{proof}[Proof of \cref{2017Commutateur1}]
We recall that $K_\lambda$ is given by \eqref{DefK}, \eqref{gev1.3DefK}, \eqref{bea0.1} and \eqref{gev0.6}.
After the change of variables, we have formulas \eqref{DefK-tilde}, \eqref{np0015} and \eqref{np0014} and thus
\begin{equation}\label{np0023}
[A_1^{3/8}, K_{\lambda}]\eta
=M\left[\nu \mathbb{D},A_1^{3/8}M\right]\widetilde{\varphi}_\eta(s, 1)
-M\nu \mathbb{D}\widetilde\varphi(s, 1)+M\widetilde \pi(s, 1)
\end{equation}
where
\begin{equation}\label{np0024}
\widetilde\varphi=A_1^{3/8} M \widetilde \varphi_{\eta}- \widetilde \varphi_{A_1^{3/8} \eta}\quad \mbox{ and }\quad \widetilde \pi= A_1^{3/8} M \widetilde \pi_{\eta}- \widetilde \pi_{A_1^{3/8} \eta}.
\end{equation}
From \eqref{gev0.6Tilde} and \eqref{gev3.9Tilde}, we have
\begin{equation}\label{gev3.9Tilde-diff}
\left\{\begin{array}{c}
\overline{\lambda} \widetilde \varphi
-\nu \mathbb{L}\widetilde \varphi + \mathbb{G} \widetilde \pi
	= \widetilde w
	+[A_1^{3/8} M, \nu \mathbb{L}]\widetilde \varphi_\eta - [A_1^{3/8} M, \mathbb{G}] \widetilde \pi_\eta
	\quad \text{in} \ \mathcal{F}_{0},\\
\div  \widetilde \varphi =   0  \quad \text{in} \ \mathcal{F}_{0},\\
 \widetilde \varphi= 0  \quad \text{on} \  \partial \mathcal{F}_0
\end{array}\right.
\end{equation}
where
$$
\widetilde w=A_1^{3/8} M \widetilde w_{\eta}- \widetilde w_{A_1^{3/8} \eta}\quad \mbox{ and }\quad \widetilde q= A_1^{3/8} M \widetilde{q}_{\eta}- \widetilde{q}_{A_1^{3/8} \eta}
$$
satisfy
\begin{equation}\label{gev0.6Tilde-diff}
\left\{\begin{array}{c}
\lambda \widetilde w -\nu \mathbb{L}\widetilde w + \mathbb{G} \widetilde{q} = 
	[A_1^{3/8} M, \nu \mathbb{L}]\widetilde w_\eta - [A_1^{3/8} M, \mathbb{G}] \widetilde q_\eta
 \quad \text{in} \ \mathcal{F}_{0},\\
\div  \widetilde w =   0  \quad \text{in} \ \mathcal{F}_{0},\\
\widetilde w = 0   \quad \text{on} \  \partial \mathcal{F}_0.
\end{array}\right.
\end{equation}
From \eqref{EstStokeslambdanh23novBis}, \eqref{EstStokeslambdanh23nov} and using the change of variables in \cref{toflat}, we deduce
\begin{equation}\label{EqMeh14092017-4}
\|\widetilde w\|_{\mathbf{L}^{2}(\mathcal{F}_0)}\leq C|\lambda|^{-1}
	\left\|[A_1^{3/8} M, \nu \mathbb{L}]\widetilde w_\eta - [A_1^{3/8} M, \mathbb{G}] \widetilde q_\eta \right\|_{{\bf L}^{2}(\mathcal{F}_0)}
\end{equation}
and
\begin{equation}\label{EqMeh14092017-5}
\|\widetilde \varphi\|_{\textbf{H}^{2}(\mathcal{F}_0)}+\|\widetilde \pi\|_{H^{1}(\mathcal{F}_0)}
\leq C\left\|\widetilde w
	+[A_1^{3/8} M, \nu \mathbb{L}]\widetilde \varphi_\eta - [A_1^{3/8} M, \mathbb{G}] \widetilde \pi_\eta
	\right\|_{{\bf L}^2(\mathcal{F}_0)}.
\end{equation}
From \eqref{EqMeh14092017-4}, \eqref{np0032},  \eqref{eq23-06-2017-2Bis} and using the change of variables in \cref{toflat}, we deduce
\begin{equation}\label{np0021}
\|\widetilde w\|_{\mathbf{L}^{2}(\mathcal{F}_0)}\leq C|\lambda|^{-1}
	\left(
	\left\|\widetilde w_\eta \right\|_{{\bf H}^{5/2+\varepsilon}(\mathcal{F}_0)} 
	+\left\|\widetilde q_\eta \right\|_{H^{3/2+\varepsilon}(\mathcal{F}_0)} 
	\right)
\leq C \left(|\lambda|^{-1} \|A_1^{1/2+\varepsilon/2}\eta\|_{\mathcal{H}_{\mathcal{S}}}+|\lambda|^{\varepsilon} \|\eta\|_{\mathcal{H}_{\mathcal{S}}}\right).
\end{equation}
Using \eqref{EqMeh14092017-5} and \eqref{np0032}, we find
\begin{equation}\label{np0020}
\|\widetilde \varphi\|_{\textbf{H}^{2}(\mathcal{F}_0)}+\|\widetilde \pi\|_{H^{1}(\mathcal{F}_0)}
\leq C\left(
	\|\widetilde w\|_{\textbf{L}^{2}(\mathcal{F}_0)}
	+\|\widetilde \varphi_\eta\|_{\textbf{H}^{5/2+\varepsilon}(\mathcal{F}_0)}
	+\|\widetilde \pi_{\eta}\|_{H^{3/2+\varepsilon}(\mathcal{F}_0)}
	\right).
\end{equation}
From \eqref{EstStokeslambdanh23nov}, \eqref{eq23-06-2017-1-0}, \eqref{eq23-06-2017-2} and using the change of variables in \cref{toflat}, we deduce
\begin{multline}\label{np0025}
\|\widetilde \varphi_\eta\|_{\textbf{H}^{5/2+\varepsilon}(\mathcal{F}_0)}
	+\|\widetilde \pi_{\eta}\|_{H^{3/2+\varepsilon}(\mathcal{F}_0)}
\leq C\left(|\lambda|^\varepsilon \|\widetilde w_\eta\|_{\textbf{H}^{1/2-\varepsilon}(\mathcal{F}_0)}+\|\widetilde w_\eta\|_{\textbf{H}^{1/2+\varepsilon}(\mathcal{F}_0)}\right)
\\
\leq C\left(|\lambda|^\varepsilon \|\eta\|_{\mathcal{H}_{\mathcal{S}}}+\|A_1^{\varepsilon/2} \eta\|_{\mathcal{H}_{\mathcal{S}}}\right).
\end{multline}
Combining the above equation with \eqref{np0021} and \eqref{np0020}, we deduce
\begin{equation}\label{np0022}
\|\widetilde \varphi\|_{\textbf{H}^{2}(\mathcal{F}_0)}+\|\widetilde \pi\|_{H^{1}(\mathcal{F}_0)}
\leq C \left(|\lambda|^{-1} \|A_1^{1/2+\varepsilon/2}\eta\|_{\mathcal{H}_{\mathcal{S}}}+|\lambda|^{\varepsilon} \|\eta\|_{\mathcal{H}_{\mathcal{S}}}+\|A_1^{\varepsilon/2} \eta\|_{\mathcal{H}_{\mathcal{S}}}\right).
\end{equation}

By using the above estimate, \eqref{np0023}, \eqref{np0025}, \eqref{np0032-0}, and trace estimates, 
\begin{multline*}
\left\| [A_1^{3/8}, K_{\lambda}]\eta \right\|_{\mathcal{H}_{\mathcal{S}}}
\leq \left\|  M\left[\mathbb{D},A_1^{3/8}M\right]\widetilde{\varphi}_\eta(\cdot, 1) \right\|_{\mathcal{H}_{\mathcal{S}}}
+
\left\| M\mathbb{D}\widetilde\varphi(\cdot, 1) \right\|_{\mathcal{H}_{\mathcal{S}}}
+
\left\| M\widetilde \pi(\cdot, 1) \right\|_{\mathcal{H}_{\mathcal{S}}}
\\
\leq C\left(
\|\widetilde \varphi_\eta\|_{\textbf{H}^{2+\varepsilon}(\mathcal{F}_0)}
+
\|\widetilde \varphi\|_{\textbf{H}^{2}(\mathcal{F}_0)}+\|\widetilde \pi\|_{H^{1}(\mathcal{F}_0)}
\right)
\leq C \left(|\lambda|^{-1} \|A_1^{1/2+\varepsilon/2}\eta\|_{\mathcal{H}_{\mathcal{S}}}+|\lambda|^{\varepsilon} \|\eta\|_{\mathcal{H}_{\mathcal{S}}}+\|A_1^{\varepsilon/2} \eta\|_{\mathcal{H}_{\mathcal{S}}}\right).
\end{multline*}
The conclusion follows from
$$
\|A_1^{\varepsilon/2} \eta\|_{\mathcal{H}_{\mathcal{S}}}\leq \left(|\lambda|^{\varepsilon} \|\eta\|_{\mathcal{H}_{\mathcal{S}}}\right)^{\frac{1}{1+\varepsilon}}\left(|\lambda|^{-1} \|A_1^{1/2+\varepsilon/2}\eta\|_{\mathcal{H}_{\mathcal{S}}}\right)^{\frac{\varepsilon}{1+\varepsilon}}\leq C\left(|\lambda|^{\varepsilon} \|\eta\|_{\mathcal{H}_{\mathcal{S}}}+|\lambda|^{-1} \|A_1^{1/2+\varepsilon/2}\eta\|_{\mathcal{H}_{\mathcal{S}}}\right).
$$
\end{proof}

\section{Estimation of $\widetilde{V}_{\lambda}^{-1}$} \label{sec_approx}
The aim of this section is to estimate $\widetilde{V}_{\lambda}^{-1}$ where $\widetilde{V}_{\lambda}$ is defined by \eqref{gev2.3}.
We recall that the notation $\mathbb{C}^+_\alpha$ is introduced in \eqref{gev2.4}.
\begin{Theorem}\label{T03}
There exists $\alpha>0$ such that for all $\lambda\in \mathbb{C}^+_\alpha$ the operator $\widetilde{V}_{\lambda} : \mathcal{D}(A_{1}) \to \mathcal{H}_{\mathcal{S}}$ is an isomorphism and for $\theta\in [0,1]$ the following estimates hold
\begin{equation}\label{EstV-CasComm}
\sup_{\lambda\in \mathbb{C}_{\alpha}^+}|\lambda|^{3/2-2\theta} \|A_1^\theta \widetilde{V}_{\lambda}^{-1}\|_{\mathcal{L}(\mathcal{H}_{\mathcal{S}})}<+\infty.
\end{equation} 
Moreover,
\begin{equation}\label{gev5.3}
\sup_{\lambda\in \mathbb{C}_{\alpha}^+}|\lambda|^{3/2-2\theta} \|A_1^\theta \widetilde{V}_{\lambda}^{*-1}\|_{\mathcal{L}(\mathcal{H}_{\mathcal{S}})}<+\infty.
\end{equation}
\end{Theorem}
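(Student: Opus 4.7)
The strategy is to treat $\widetilde{V}_{\lambda}$ as a perturbation of the simpler ``model'' operator $T_{\lambda} \ov \lambda^2 I + 2\rho\lambda A_1^{1/4} + A_1$, for which direct spectral analysis applies. Since $T_{\lambda}$ commutes with $A_1$, in the eigenbasis $\{e_n\}$ of $A_1$ (with eigenvalues $\nu_n$) the operator $T_{\lambda}$ acts as multiplication by the symbol $p_n(\lambda)\ov \lambda^2 + 2\rho\lambda\nu_n^{1/4} + \nu_n$ on the $n$-th eigenspace. Writing $\lambda = \mu e^{i\varphi}$ with $\varphi\in[-\pi/2,\pi/2]$, the identity
\[
|p_n(\lambda)|^2 = \bigl[(\mu^2+\nu_n)\cos\varphi + 2\rho\mu\nu_n^{1/4}\bigr]^2 + \sin^2\varphi\,(\mu^2-\nu_n)^2
\]
gives $|p_n(\lambda)| \geq 2\rho\mu\nu_n^{1/4}$ since $\cos\varphi \geq 0$, and a case analysis on the relative sizes of $\nu_n^{1/2}$ and $|\lambda|$ produces the sharp pointwise bound $|p_n(\lambda)| \geq c_\rho\, \nu_n^\theta |\lambda|^{3/2-2\theta}$, uniformly in $n$, for every $\theta\in[0,1]$ and every $|\lambda|\geq \alpha$ with $\alpha=\alpha(\rho,\nu_1)$ sufficiently large. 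Consequently $\|A_1^\theta T_{\lambda}^{-1}\|_{\mathcal{L}(\mathcal{H}_{\mathcal{S}})} \leq C|\lambda|^{2\theta-3/2}$.

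To transfer the estimate to $\widetilde V_\lambda = T_\lambda + \lambda^2 K_\lambda$, I would first obtain invertibility for $|\lambda|\geq \alpha$ by factoring $\widetilde V_\lambda = T_\lambda(I + \lambda^2 T_\lambda^{-1}K_\lambda)$ and applying the Fredholm alternative: the operator $\lambda^2 T_\lambda^{-1}K_\lambda$ is compact on $\mathcal{H}_{\mathcal{S}}$ (using the smoothing $K_\lambda\in \mathcal{L}(\mathcal{D}(A_1^{1/8})',\mathcal{D}(A_1^{1/8}))$ from \cref{P04} and the compactness of the embedding $\mathcal{D}(A_1^{1/8})\hookrightarrow \mathcal{H}_\mathcal{S}$), and injectivity of $I + \lambda^2 T_\lambda^{-1}K_\lambda$ follows from the energy identity
\[
\lambda^2\bigl(\|\eta\|^2 + \langle K_\lambda\eta,\eta\rangle\bigr) + 2\rho\lambda\|A_1^{1/8}\eta\|^2 + \|A_1^{1/2}\eta\|^2 = \langle f,\eta\rangle,
\]
whose real and imaginary parts, combined with the positivity of $K_\lambda$, force $\eta = 0$ when $f = 0$.

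The quantitative bound \eqref{EstV-CasComm} is then obtained by iterating $\eta = T_\lambda^{-1}f - \lambda^2 T_\lambda^{-1} K_\lambda \eta$. Since $\lambda^2 T_\lambda^{-1}K_\lambda$ is not a contraction on $\mathcal{H}_\mathcal{S}$ in the plain operator norm (the smoothing gain $\|A_1^{-1/8}T_\lambda^{-1}\|\leq C|\lambda|^{-7/4}$ only partially offsets the $|\lambda|^2$ factor), the bootstrap must invoke the additional $|\lambda|^{-1}$ gain in the commutator of $K_\lambda$ with $A_1^{3/8}$ established in \cref{2017Commutateur1}. Concretely, the identity $[A_1^{3/8}, T_\lambda^{-1}K_\lambda] = T_\lambda^{-1}[A_1^{3/8},K_\lambda]$ (valid because $T_\lambda$ commutes with $A_1$) together with $\|[A_1^{3/8},K_\lambda]\eta\|_{\mathcal{H}_\mathcal{S}} \leq C(|\lambda|^{-1}\|A_1^{1/2+\varepsilon}\eta\|_{\mathcal{H}_\mathcal{S}} + |\lambda|^\varepsilon\|\eta\|_{\mathcal{H}_\mathcal{S}})$ lets one exchange $K_\lambda$ with powers of $A_1$ while extracting the crucial $|\lambda|^{-1}$ factor; combined with the spectral bounds on $T_\lambda^{-1}$ this closes the estimate at $\theta = 0$ and $\theta = 1$, with intermediate values following by operator interpolation.

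Finally, \eqref{gev5.3} is derived by applying the same argument to $\widetilde V_\lambda^* = \overline{\lambda}^2(I+K_\lambda) + 2\rho\overline{\lambda} A_1^{1/4} + A_1$ (using the self-adjointness of $K_\lambda$ from \cref{P04}), which has exactly the structural form of $\widetilde V_{\overline{\lambda}}$ with $\overline{\lambda}\in\mathbb{C}^+$ whenever $\lambda\in\mathbb{C}^+$. I expect the hardest step to be the third-paragraph bootstrap, where the $|\lambda|^{\varepsilon}$ loss in \cref{2017Commutateur1} must be absorbed against the sharp $|\lambda|^{-1/2}$ gain of the spectral estimate on $T_\lambda^{-1}$ in the worst-case regime $\nu_n \sim |\lambda|^2$.
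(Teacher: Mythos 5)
Your spectral analysis of the model operator $T_{\lambda} = \lambda^2 I + 2\rho\lambda A_1^{1/4} + A_1$ in the first paragraph is correct: the identity for $|p_n(\lambda)|^2$ and the resulting bound $\|A_1^\theta T_{\lambda}^{-1}\|_{\mathcal{L}(\mathcal{H}_{\mathcal{S}})}\leq C|\lambda|^{2\theta-3/2}$ for $\theta\in[0,1]$ (and even $\theta\geq -1/8$, after taking $\alpha$ large) can be checked by the case analysis $\nu_n\lesssim|\lambda|^2$, $\nu_n\sim|\lambda|^2$, $\nu_n\gtrsim|\lambda|^2$. The Fredholm argument for invertibility in paragraph two is also sound, since $K_{\lambda}\in\mathcal{L}(\mathcal{H}_{\mathcal{S}},\mathcal{D}(A_1^{1/8}))$ is compact and your energy identity (taking the real part of $\overline{\lambda}\langle\widetilde V_\lambda\eta,\eta\rangle$) forces $A_1^{1/8}\eta=0$.

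The gap is in paragraph three. The iteration $\eta = T_{\lambda}^{-1}f - \lambda^2 T_{\lambda}^{-1}K_{\lambda}\eta$ does not close with the tools you cite, and the commutator lemma does not rescue it. Concretely: using the smoothing $\|A_1^{1/8}K_{\lambda}g\|_{\mathcal{H}_{\mathcal{S}}}\leq C\|A_1^{-1/8}g\|_{\mathcal{H}_{\mathcal{S}}}$ together with $\|A_1^{-1/8}T_{\lambda}^{-1}\|\leq C|\lambda|^{-7/4}$ gives at best $\|\lambda^2T_{\lambda}^{-1}K_{\lambda}\|_{\mathcal{L}(\mathcal{H}_{\mathcal{S}})}\leq C|\lambda|^{1/4}$, and this growing factor multiplies the quantity you are trying to estimate. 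Writing, as you suggest, $A_1^\theta T_{\lambda}^{-1}K_{\lambda}\widetilde V_{\lambda}^{-1}f = T_{\lambda}^{-1}K_{\lambda}(A_1^\theta\widetilde V_{\lambda}^{-1}f) + T_{\lambda}^{-1}[A_1^\theta,K_{\lambda}]\widetilde V_{\lambda}^{-1}f$ shows that the commutator controls only the \emph{second} piece; the first piece is the principal term, still carries the full $|\lambda|^{1/4}$, and is not a commutator — \cref{2017Commutateur1} has nothing to say about it. A Neumann-series or fixed-point closure would require $\|\lambda^2T_{\lambda}^{-1}K_{\lambda}\|<1$, which is false for $|\lambda|$ large (precisely the regime you need). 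The missing ingredient is that $\lambda^2 K_{\lambda}$ cannot be treated as a perturbation of $T_{\lambda}$ at all: the positivity of $K_{\lambda}$ has to be used \emph{structurally}. That is exactly what the paper does: instead of factoring out $T_{\lambda}$, it inserts $(I+K_{\lambda})^{\pm 1/2}$ into $\widetilde V_{\lambda}\eta/\lambda^2$ and expands the square, so that the only term requiring the commutator lemma is a single cross term $\mathcal{C}_\lambda$ (see \eqref{DefClambda}), which then decays like $|\lambda|^{-1/4+2\varepsilon}$ relative to the main terms and is absorbed. In your scheme, positivity enters only in the injectivity step and is lost for the quantitative bound, which is why the bootstrap cannot close. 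You would need to replace the third paragraph by a quadratic-form argument of the paper's type (or by some other mechanism that exploits $K_{\lambda}\geq 0$ quantitatively, not merely its boundedness and smoothing).
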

\begin{proof}
Note that it is sufficient to consider the cases $\theta=0$ and $\theta=1$, the other cases are obtained by interpolation. 

Let us consider $\lambda\in \mathbb{C}_{\alpha}^+$ and $\eta\in \mathcal{D}(A_1)$. Then from \eqref{gev2.3}
\begin{equation}\label{np0000}
\left\|\frac{\widetilde{V}_{\lambda} \eta}{\lambda^2}\right\|_{\mathcal{H}_{\mathcal{S}}}^2
=\left\| (I+K_{\lambda})\eta+ \frac{2\rho A_1^{1/4}\eta}{\lambda}+ \frac{A_1\eta}{\lambda^2} \right\|_{\mathcal{H}_{\mathcal{S}}}^2.
\end{equation}
Using \eqref{eq28-11-1b} and \cref{P03}, we deduce that
\begin{equation}\label{np0001}
\|\eta\|_{\mathcal{H}_{\mathcal{S}}} \leq \|(I+K_{\lambda})^{1/2}\eta\|_{\mathcal{H}_{\mathcal{S}}}\leq C \|\eta\|_{\mathcal{H}_{\mathcal{S}}}.
\end{equation}
Thus, combining \eqref{np0000} and \eqref{np0001}, we deduce
\begin{multline}\label{np0002}
\left\|\frac{\widetilde{V}_{\lambda} \eta}{\lambda^2}\right\|_{\mathcal{H}_{\mathcal{S}}}^2
\geq 
\left\|(I+K_{\lambda})^{1/2}\eta + 2\rho \frac{(I+K_{\lambda})^{-1/2} A_1^{1/4}\eta}{\lambda}+ \frac{(I+K_{\lambda})^{-1/2}  A_1\eta}{\lambda^2} \right\|_{\mathcal{H}_{\mathcal{S}}}^2
\\
=\left\|(I+K_{\lambda})^{-1/2}\left(\eta + K_{\lambda}\eta  + \frac{A_1 \eta}{\lambda^2}\right) \right\|_{\mathcal{H}_{\mathcal{S}}}^2
	+4\rho^2\left\| \frac{(I+K_{\lambda})^{-1/2} A_1^{1/4}\eta}{\lambda}\right\|_{\mathcal{H}_{\mathcal{S}}}^2\\
	+4\rho \Re\left(\frac{1}{\lambda}\right) \left\|A_1^{1/8}\eta\right\|_{\mathcal{H}_{\mathcal{S}}}^2
	+4\rho \Re \left(\frac{1}{|\lambda|^2\lambda}\left\langle (I+K_{\lambda})^{-1} A_1\eta, A_1^{1/4}\eta\right\rangle_{\mathcal{H}_S}\right).
\end{multline}
We can write
\begin{multline}\label{np0003}
\left\langle (I+K_{\lambda})^{-1} A_1\eta, A_1^{1/4}\eta\right\rangle_{\mathcal{H}_S}
=
\|(I+K_{\lambda})^{-1/2} A_1^{5/8}\eta \|_{\mathcal{H}_S}^2
+\left\langle \left[(I+K_{\lambda})^{-1},A_1^{3/8}\right] A_1^{5/8}\eta, A_1^{1/4}\eta\right\rangle_{\mathcal{H}_S}
\\
=
\|(I+K_{\lambda})^{-1/2} A_1^{5/8}\eta \|_{\mathcal{H}_S}^2
+\left\langle  (I+K_{\lambda})^{-1}A_1^{5/8}\eta, \left[A_1^{3/8},K_{\lambda}\right](I+K_{\lambda})^{-1}A_1^{1/4}\eta\right\rangle_{\mathcal{H}_S}.
\end{multline}
Let use introduce the following notation,
\begin{equation}\label{DefClambda}
\mathcal{C}_\lambda\ov 4\rho \Re \left(\frac{1}{|\lambda|^2\lambda}\left\langle  (I+K_{\lambda})^{-1}A_1^{5/8}\eta, \left[A_1^{3/8}, K_{\lambda}\right](I+K_{\lambda})^{-1}A_1^{1/4}\eta\right\rangle_{\mathcal{H}_S}\right).
\end{equation}
Using \eqref{np0001} and \eqref{np0003} in \eqref{np0002} we deduce,
\begin{equation}\label{np0002imp}
\left\|\frac{\widetilde{V}_{\lambda} \eta}{\lambda^2}\right\|_{\mathcal{H}_{\mathcal{S}}}^2
\geq 
C\left(\left\|\eta + K_{\lambda}\eta  + \frac{A_1 \eta}{\lambda^2} \right\|_{\mathcal{H}_{\mathcal{S}}}^2
	+\left\| \frac{A_1^{1/4}\eta}{\lambda}\right\|_{\mathcal{H}_{\mathcal{S}}}^2\right)
	+\mathcal{C}_\lambda
\end{equation}
and thus
\begin{equation}\label{np0038bis}
\left\|\frac{\widetilde{V}_{\lambda} \eta}{\lambda^2}\right\|_{\mathcal{H}_{\mathcal{S}}}^2
\geq C\left(\|(I + K_{\lambda})\eta\|_{\mathcal{H}_{\mathcal{S}}}^2+\left\|\frac{A_1 \eta}{\lambda^2} \right\|_{\mathcal{H}_{\mathcal{S}}}^2-2\left|\left\langle \frac{A_1 \eta}{\lambda^2},\eta + K_{\lambda}\eta\right\rangle_{\mathcal{H}_{\mathcal{S}}}\right|
	+\left\| \frac{A_1^{1/4}\eta}{\lambda}\right\|_{\mathcal{H}_{\mathcal{S}}}^2\right)
	+\mathcal{C}_\lambda.
\end{equation}

Let us estimate 
$$
\left|\left\langle \frac{A_1 \eta}{\lambda^2},(I + K_{\lambda})\eta\right\rangle_{\mathcal{H}_{\mathcal{S}}}\right|.
$$
To do this, we start by noticing that 
$$
\left|\left\langle A_1 \eta,\eta\right\rangle_{\mathcal{H}_{\mathcal{S}}}\right|
\leq C\|A_1^{1/4} \eta\|_{\mathcal{H}_{\mathcal{S}}} \|A_1^{3/4} \eta\|_{\mathcal{H}_{\mathcal{S}}}.
$$
On the other hand, using \eqref{eq25112018-1},
$$
\left|\left\langle A_1 \eta,K_{\lambda}\eta\right\rangle_{\mathcal{H}_{\mathcal{S}}}\right|
\leq |\lambda|^\varepsilon \|\eta\|_{\mathcal{H}_{\mathcal{S}}} \|A_1^{3/4} \eta\|_{\mathcal{H}_{\mathcal{S}}},
$$
so that 
\begin{equation}\label{np0047}
\left|\left\langle A_1 \eta,(I + K_{\lambda})\eta\right\rangle_{\mathcal{H}_{\mathcal{S}}}\right| 
	\leq C \|A_1^{3/4} \eta\|_{\mathcal{H}_{\mathcal{S}}}\Bigg(\|A_1^{1/4} \eta\|_{\mathcal{H}_{\mathcal{S}}}
	+|\lambda|^{\varepsilon}\|\eta\|_{\mathcal{H}_{\mathcal{S}}}\Bigg).
\end{equation}
From interpolation inequality and Young inequality we deduce
\begin{equation}\label{np0045}
\frac{\left\| A_1^{3/4}\eta\right\|_{\mathcal{H}_S}}{|\lambda|^{3/2}}
\leq C\left(\frac{\|A_1\eta\|_{\mathcal{H}_{\mathcal{S}}}}{|\lambda|^{2}}\right)^{3/4}\|\eta\|_{\mathcal{H}_{\mathcal{S}}}^{1/4}
\leq C\left(\left\|\frac{A_1\eta}{\lambda^{2}}\right\|_{\mathcal{H}_{\mathcal{S}}}+\|\eta\|_{\mathcal{H}_{\mathcal{S}}}\right).
\end{equation}
Moreover, from the interpolation inequality $\|A_1^{3/4} \eta\|_{\mathcal{H}_{\mathcal{S}}}\leq C\|A_1^{1/4} \eta\|_{\mathcal{H}_{\mathcal{S}}}^{1/3}\|A_1 \eta\|_{\mathcal{H}_{\mathcal{S}}}^{2/3}$, we also deduce
\begin{multline}\label{np0048}
|\lambda|^{\varepsilon} \|A_1^{3/4} \eta\|_{\mathcal{H}_{\mathcal{S}}}\|\eta\|_{\mathcal{H}_{\mathcal{S}}}
\leq C |\lambda|^{\varepsilon} \|A_1^{1/4} \eta\|_{\mathcal{H}_{\mathcal{S}}}^{1/3} \|A_1 \eta\|_{\mathcal{H}_{\mathcal{S}}}^{2/3}\|\eta\|_{\mathcal{H}_{\mathcal{S}}}
\leq C |\lambda|^{4/3+\varepsilon} \|A_1^{1/4} \eta\|_{\mathcal{H}_{\mathcal{S}}} \left\|\frac{A_1 \eta}{\lambda^2}\right\|_{\mathcal{H}_{\mathcal{S}}}^{2/3}\|\eta\|_{\mathcal{H}_{\mathcal{S}}}^{1/3}\\
\leq C 
|\lambda|^{4/3+\varepsilon}\|A_1^{1/4} \eta\|_{\mathcal{H}_{\mathcal{S}}} \left(\left\|\frac{A_1\eta}{\lambda^{2}}\right\|_{\mathcal{H}_{\mathcal{S}}}+\|\eta\|_{\mathcal{H}_{\mathcal{S}}}\right).
\end{multline}
Hence, combining \eqref{np0047}, \eqref{np0045} and \eqref{np0048} with $\varepsilon<1/6$ yields
\begin{equation}\label{np0047bis}
\left|\left\langle A_1 \eta,(I + K_{\lambda})\eta\right\rangle_{\mathcal{H}_{\mathcal{S}}}\right| 
	\leq C |\lambda|^{3/2} \|A_1^{1/4} \eta\|_{\mathcal{H}_{\mathcal{S}}} \left(\left\|\frac{A_1\eta}{\lambda^{2}}\right\|_{\mathcal{H}_{\mathcal{S}}}+\|\eta\|_{\mathcal{H}_{\mathcal{S}}}\right),
\end{equation}
and we thus obtain 
$$
\left|\left\langle \frac{A_1 \eta}{\lambda^2},(I + K_{\lambda})\eta\right\rangle_{\mathcal{H}_{\mathcal{S}}}\right| 
	\leq C |\lambda| \left\|\frac{A_1^{1/4} \eta}{\lambda}\right\|_{\mathcal{H}_{\mathcal{S}}}^2
	+ \frac{1}{4} \left(\left\|\frac{A_1\eta}{\lambda^{2}}\right\|_{\mathcal{H}_{\mathcal{S}}}^2+\|(I + K_{\lambda})\eta\|_{\mathcal{H}_{\mathcal{S}}}^2 \right).
$$
Then with \eqref{np0038bis} it yields
$$
|\mathcal{C}_\lambda|+|\lambda|\left\| \frac{A_1^{1/4}\eta}{\lambda}\right\|_{\mathcal{H}_{\mathcal{S}}}^2+\left\|\frac{\widetilde{V}_{\lambda} \eta}{\lambda^2}\right\|_{\mathcal{H}_{\mathcal{S}}}^2
\geq C\left(\|(I + K_{\lambda})\eta\|_{\mathcal{H}_{\mathcal{S}}}^2+\left\|\frac{A_1 \eta}{\lambda^2} \right\|_{\mathcal{H}_{\mathcal{S}}}^2\right),
	$$
and with \eqref{np0002imp}, since $|\lambda|>\alpha$ with $\alpha>0$, we deduce
\begin{equation}\label{est062019}
|\lambda||\mathcal{C}_\lambda|+|\lambda|\left\|\frac{\widetilde{V}_{\lambda} \eta}{\lambda^2}\right\|_{\mathcal{H}_{\mathcal{S}}}^2\geq C\left(\|(I + K_{\lambda})\eta\|_{\mathcal{H}_{\mathcal{S}}}^2+\left\|\frac{A_1 \eta}{\lambda^2} \right\|_{\mathcal{H}_{\mathcal{S}}}^2\right),
\end{equation}
where $\mathcal{C}_\lambda$ is the product defined in \eqref{DefClambda}.

Next, let us now estimate $|\lambda||\mathcal{C}_\lambda|$. Using \cref{2017Commutateur1} and  \cref{P02}, we first deduce,
\begin{multline}\label{np0035}
\left\| \left[K_{\lambda},A_1^{3/8}\right](I+K_{\lambda})^{-1}A_1^{1/4}\eta \right\|_{\mathcal{H}_S}
\\
\leq C( |\lambda|^{-1}\|A_1^{1/2+\varepsilon}(I+K_{\lambda})^{-1}A_1^{1/4}\eta\|_{\mathcal{H}_{\mathcal{S}}}
+|\lambda|^{\varepsilon}\|(I+K_{\lambda})^{-1}A_1^{1/4}\eta\|_{\mathcal{H}_{\mathcal{S}}})
\\
\leq C( 
|\lambda|^{-1}\|A_1^{3/4+\varepsilon}\eta\|_{\mathcal{H}_{\mathcal{S}}}
+|\lambda|^{-1/2+3\varepsilon} \|A_1^{1/4} \eta\|_{\mathcal{H}_{\mathcal{S}}}
+|\lambda|^{\varepsilon}\|A_1^{1/4}\eta\|_{\mathcal{H}_{\mathcal{S}}}
).
\end{multline}
Thus, with \eqref{DefClambda} we obtain,
\begin{equation}\label{np0052}
|\lambda||\mathcal{C}_\lambda|
\leq C|\lambda|^{-2}\|A_1^{5/8}\eta\|_{\mathcal{H}_S} 
	\left(|\lambda|^{-1}\|A_1^{3/4+\varepsilon}\eta\|_{\mathcal{H}_{\mathcal{S}}}+|\lambda|^{\varepsilon}\|A_1^{1/4}\eta\|_{\mathcal{H}_{\mathcal{S}}}\right).
\end{equation}
From interpolation inequality and Young inequality we deduce
$$
\frac{\left\| A_1^{5/8}\eta\right\|_{\mathcal{H}_S}}{|\lambda|^{5/4}}
\leq C\left(\frac{\|A_1\eta\|_{\mathcal{H}_{\mathcal{S}}}}{|\lambda|^{2}}\right)^{5/8}\|\eta\|_{\mathcal{H}_{\mathcal{S}}}^{3/8}
\leq C\left(\left\|\frac{A_1\eta}{\lambda^{2}}\right\|_{\mathcal{H}_{\mathcal{S}}}+\|\eta\|_{\mathcal{H}_{\mathcal{S}}}\right),
$$
$$
\frac{\left\| A_1^{3/4+\varepsilon}\eta\right\|_{\mathcal{H}_S}}{|\lambda|^{3/2+2\varepsilon}}
\leq C\left(\frac{\|A_1\eta\|_{\mathcal{H}_{\mathcal{S}}}}{|\lambda|^{2}}\right)^{3/4+\varepsilon}\|\eta\|_{\mathcal{H}_{\mathcal{S}}}^{1/4-\varepsilon}
\leq C\left(\left\|\frac{A_1\eta}{\lambda^{2}}\right\|_{\mathcal{H}_{\mathcal{S}}}+\|\eta\|_{\mathcal{H}_{\mathcal{S}}}\right),
$$
and
$$
\frac{\left\| A_1^{1/4}\eta\right\|_{\mathcal{H}_S}}{|\lambda|^{1/2}}
\leq C\left(\frac{\|A_1\eta\|_{\mathcal{H}_{\mathcal{S}}}}{|\lambda|^{2}}\right)^{1/4}\|\eta\|_{\mathcal{H}_{\mathcal{S}}}^{3/4}
\leq C\left(\left\|\frac{A_1\eta}{\lambda^{2}}\right\|_{\mathcal{H}_{\mathcal{S}}}+\|\eta\|_{\mathcal{H}_{\mathcal{S}}}\right).
$$
The above estimates with \eqref{np0052} yield,
\begin{equation}\label{np0054}
|\lambda||\mathcal{C}_\lambda|
 \leq C|\lambda|^{-1/4+2\varepsilon}\left(\left\|\frac{A_1\eta}{\lambda^{2}}\right\|_{\mathcal{H}_{\mathcal{S}}}^2+\|\eta\|_{\mathcal{H}_{\mathcal{S}}}^2\right),
\end{equation}
and by combining \eqref{est062019} and \eqref{np0054}, for $|\lambda|>\alpha$ and $\alpha>0$ large enough we deduce,
\begin{equation}\label{gev5.20}
 	\left\|\widetilde{V}_{\lambda} \eta\right\|_{\mathcal{H}_{\mathcal{S}}}^2
\geq C\left(|\lambda|^3 \|\eta\|_{\mathcal{H}_{\mathcal{S}}}^2+|\lambda|^{-1} \left\|A_1 \eta \right\|_{\mathcal{H}_{\mathcal{S}}}^2\right).
\end{equation}
Then \eqref{EstV-CasComm} is proved for $\theta=0$ and $\theta=1$ if we show that $\widetilde{V}_{\lambda}$ is invertible.

For that, we first deduce from \cref{P04} that
\begin{equation}\label{gev5.2}
\widetilde{V}_{\lambda}^*= \overline{\lambda}^2 (I+K_{\lambda}) + 2\rho \overline{\lambda} A_1^{1/4}+ A_1.
\end{equation}
Since $\overline{\lambda}\in \mathbb{C}_\alpha^+$ if ${\lambda}\in \mathbb{C}_\alpha^+$, we perform the same calculations as above that led to \eqref{gev5.20} to find 
\begin{equation}\label{gev5.3bis}
\left\|\widetilde{V}_{\lambda}^* \eta\right\|_{\mathcal{H}_{\mathcal{S}}}^2\geq  C \left(|\lambda|^{3}\|\eta\|_{\mathcal{H}_{\mathcal{S}}}^2+|\lambda|^{-1}\left\|A_1 \eta \right\|_{\mathcal{H}_{\mathcal{S}}}^2\right).
\end{equation}
Relation \eqref{gev5.3bis} yields that the image of $\widetilde{V}_{\lambda}$ is dense in $\mathcal{H}_{\mathcal{S}}$ and relation \eqref{gev5.20} implies that the image of $\widetilde{V}_{\lambda}$ is closed in $\mathcal{H}_{\mathcal{S}}$ and that $\widetilde{V}_{\lambda}$ is injective.  We then deduce that $\widetilde{V}_{\lambda}$ is invertible. 

Finally, \eqref{gev5.3bis} gives \eqref{gev5.3}. 

\end{proof}

\begin{Corollary}\label{Cor05072017}
Let $\alpha>0$ be given in Theorem \ref{T03}. For $\theta\in [0,1]$ and $\beta\in [0,1]$ such that $\theta+\beta\leq 1$ the following estimate holds
\begin{equation}\label{EstV-CasComm2}
\sup_{\lambda\in \mathbb{C}^+_{\alpha}}|\lambda|^{3/2-2\theta-2\beta} \|A_1^{\theta} \widetilde{V}_{\lambda}^{-1}A_1^{\beta}\|_{\mathcal{L}(\mathcal{H}_{\mathcal{S}})}<+\infty.
\end{equation}
\end{Corollary}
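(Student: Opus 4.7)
The plan is to derive \cref{Cor05072017} from \cref{T03} by a complex-interpolation argument on the scale of fractional power domains of $A_1$, with the second endpoint obtained from \eqref{gev5.3} by duality. No new commutator estimate beyond those already used in \cref{T03} is needed.

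First, since $\theta+\beta\in[0,1]$, estimate \eqref{EstV-CasComm} applied with exponent $\theta+\beta$ gives the ``regularity'' endpoint
\begin{equation*}
\|\widetilde V_\lambda^{-1}\|_{\mathcal L(\mathcal H_{\mathcal S},\,\mathcal D(A_1^{\theta+\beta}))}\leq C|\lambda|^{2(\theta+\beta)-3/2},\qquad \lambda\in\mathbb C_\alpha^+.
\end{equation*}
For the dual endpoint, I would use \eqref{gev5.3} together with the self-adjointness of $A_1$ and the identity $(\widetilde V_\lambda^*)^{-1*}=\widetilde V_\lambda^{-1}$: taking the adjoint of the bounded operator $A_1^{\theta+\beta}\widetilde V_\lambda^{*-1}$ on $\mathcal H_{\mathcal S}$ produces $\widetilde V_\lambda^{-1}A_1^{\theta+\beta}\in\mathcal L(\mathcal H_{\mathcal S})$ with the same operator norm. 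Via the $\mathcal H_{\mathcal S}$-pivot identification $\mathcal D(A_1^{\theta+\beta})'=\mathcal D(A_1^{-(\theta+\beta)})$, this reads
\begin{equation*}
\|\widetilde V_\lambda^{-1}\|_{\mathcal L(\mathcal D(A_1^{-(\theta+\beta)}),\,\mathcal H_{\mathcal S})}\leq C|\lambda|^{2(\theta+\beta)-3/2}.
\end{equation*}

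Next, because $A_1$ is positive and self-adjoint on $\mathcal H_{\mathcal S}$, the family $\{\mathcal D(A_1^s)\}_{s\in\mathbb R}$ is a standard complex-interpolation scale, with $[\mathcal D(A_1^{s_0}),\mathcal D(A_1^{s_1})]_t=\mathcal D(A_1^{(1-t)s_0+t s_1})$. Applying complex interpolation between the two endpoints above at parameter $t=\beta/(\theta+\beta)$ (the trivial case $\theta=\beta=0$ being already \cref{T03}), the source spaces interpolate to $\mathcal D(A_1^{-\beta})$ and the target spaces to $\mathcal D(A_1^\theta)$; since the two endpoint constants coincide, one obtains
\begin{equation*}
\|\widetilde V_\lambda^{-1}\|_{\mathcal L(\mathcal D(A_1^{-\beta}),\,\mathcal D(A_1^\theta))}\leq C|\lambda|^{2(\theta+\beta)-3/2},
\end{equation*}
which unfolds to the claimed bound \eqref{EstV-CasComm2}.

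The one delicate point is setting up the dual endpoint rigorously: from \eqref{gev5.3} one first reads that $\widetilde V_\lambda^*$ is an isomorphism $\mathcal D(A_1)\to\mathcal H_{\mathcal S}$ with controlled inverse, so by transposition $\widetilde V_\lambda$ extends to an isomorphism $\mathcal H_{\mathcal S}\to \mathcal D(A_1)'$ whose inverse agrees with $\widetilde V_\lambda^{-1}$ on $\mathcal H_{\mathcal S}$. Only after this extension are the adjoint identity and the pivot duality used above meaningful as operator bounds on the requisite negative-order fractional power spaces. Modulo this routine extension, the interpolation step is entirely standard and the corollary follows at once.
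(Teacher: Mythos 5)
Your proof is correct and uses the same core ingredients as the paper's: the self-adjoint estimate \eqref{gev5.3} dualized to get $\widetilde{V}_\lambda^{-1}$ on a negative-order space, then complex interpolation along the fractional power scale $\{\mathcal{D}(A_1^s)\}$. The paper reaches the same bound by a slightly longer route (three nested interpolations, starting from $\theta=0,1$ and the dual endpoint $\mathcal{D}(A_1)'$), whereas you pick both endpoints at exponent $\theta+\beta$ so that the two $|\lambda|$-powers coincide and a single interpolation suffices — a modest streamlining, not a different method.
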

\begin{proof}
From \eqref{gev5.3} we deduce 
$$\sup_{\lambda\in \mathbb{C}_{\alpha}^+}|\lambda|^{-1/2}\|(\widetilde{V}_{\lambda}^{-1})^*\|_{\mathcal{L}(\mathcal{H}_{\mathcal{S}},\mathcal{D}(A_1))}<+\infty,$$ from which we obtain by duality,
\begin{equation}\label{np0057}
\sup_{\lambda\in \mathbb{C}_{\alpha}^+}|\lambda|^{-1/2} \|\widetilde{V}_{\lambda}^{-1}\|_{\mathcal{L}(\mathcal{D}(A_1)',\mathcal{H}_{\mathcal{S}})}<+\infty.
\end{equation}
By combining \eqref{np0057} with  \eqref{EstV-CasComm} for $\theta=1$ with an interpolation argument yields,
\begin{equation}\label{eq04-07-2017-1}
\sup_{\lambda\in \mathbb{C}_{\alpha}^+}|\lambda|^{-1/2} \|\widetilde{V}_{\lambda}^{-1}\|_{\mathcal{L}(\mathcal{D}(A_1^\beta)',\mathcal{D}(A_1^{1-\beta}))}<+\infty\quad (\beta\in [0,1]).
\end{equation}
By combining \eqref{np0057} with   \eqref{EstV-CasComm} for $\theta=0$ with an interpolation argument yields for $\beta\in [0,1]$,
\begin{equation}\label{eq04-07-2017-2}
\sup_{\lambda\in \mathbb{C}_{\alpha}^+}|\lambda|^{3/2-2\beta} \|\widetilde{V}_{\lambda}^{-1}\|_{\mathcal{L}(\mathcal{D}(A_1^\beta)',\mathcal{H}_{\mathcal{S}})}<+\infty\quad (\beta\in [0,1]).
\end{equation}
Then \eqref{EstV-CasComm2} follows by interpolating \eqref{eq04-07-2017-1} and \eqref{eq04-07-2017-2}.
\end{proof}

\begin{Proposition}\label{C01}
Let $\alpha>0$ be given in Theorem \ref{T03}. For $\theta\in [-1/8,1]$ the following estimates hold
\begin{equation}\label{EstV-22}
\sup_{\lambda\in \mathbb{C}^+_{\alpha}}|\lambda|^{7/4-2\theta} \|A_1^{-1/8} \widetilde{V}_{\lambda}^{-1}A_1^\theta\|_{\mathcal{L}(\mathcal{H}_{\mathcal{S}})}<+\infty,
\end{equation} 
\begin{equation}\label{EstV-22Adj}
\sup_{\lambda\in \mathbb{C}^+_{\alpha}}|\lambda|^{7/4-2\theta} \|A_1^{\theta} \widetilde{V}_{\lambda}^{-1}A_1^{-1/8}\|_{\mathcal{L}(\mathcal{H}_{\mathcal{S}})}<+\infty.
\end{equation} 
\end{Proposition}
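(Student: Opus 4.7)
My plan is to establish \eqref{EstV-22} and then deduce \eqref{EstV-22Adj} by the adjoint identity
\[
\|A_1^{\theta}\widetilde{V}_\lambda^{-1}A_1^{-1/8}\|_{\mathcal{L}(\mathcal{H}_\mathcal{S})} = \|A_1^{-1/8}\widetilde{V}_\lambda^{*-1}A_1^{\theta}\|_{\mathcal{L}(\mathcal{H}_\mathcal{S})},
\]
using that $\widetilde{V}_\lambda^*$ satisfies estimates analogous to $\widetilde{V}_\lambda$ by \eqref{gev5.3}, so \eqref{EstV-22} applies to $\widetilde{V}_\lambda^*$ as well. The core difficulty is that \eqref{EstV-22} is strictly stronger than what one gets from \cref{Cor05072017} by inserting the bounded operator $A_1^{-1/8}$ on the left: an extra factor of $|\lambda|^{-1/4}$ is needed, and this gain reflects the effect of the damping term $2\rho\lambda A_1^{1/4}$ at the resonant scale $A_1\sim|\lambda|^2$.

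The crucial step is to prove the endpoint $\theta = 0$ of \eqref{EstV-22}, equivalently the lower bound
\[
\|\widetilde{V}_\lambda\eta\|_{\mathcal{H}_\mathcal{S}}^2 \geq c|\lambda|^{7/2}\|A_1^{-1/8}\eta\|_{\mathcal{H}_\mathcal{S}}^2 \qquad (\eta\in\mathcal{D}(A_1),\ \lambda\in\mathbb{C}^+_\alpha)
\]
for $\alpha$ large enough. To do so, I would return to inequality \eqref{np0002imp} in the proof of \cref{T03} and argue by a spectral dichotomy in $A_1$: for eigenvalues $\mu\ll|\lambda|^2$ or $\mu\gg|\lambda|^2$, the summand $\|(I+K_\lambda)\eta+A_1\eta/\lambda^2\|^2$ of \eqref{np0002imp} already produces a lower bound of order $|\lambda|^4\|\eta\|^2$ (respectively $|\lambda|^{-1}\|A_1\eta\|^2$), both much larger than $|\lambda|^{7/2}\|A_1^{-1/8}\eta\|^2$; at the critical scale $\mu\sim|\lambda|^2$ the damping contribution $\|A_1^{1/4}\eta/\lambda\|^2$ takes over, and since $\mu^{-1/4}\sim|\lambda|^{-1/2}$ and $\|A_1^{1/4}\eta\|^2\sim|\lambda|\|\eta\|^2$ at this scale, one gets $|\lambda|^2\|A_1^{1/4}\eta\|^2\sim|\lambda|^3\sim|\lambda|^{7/2}\|A_1^{-1/8}\eta\|^2$, which is exactly the required power. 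The non-diagonal perturbation $K_\lambda$ is absorbed using \cref{2017Commutateur1}, in the same spirit as the term $\mathcal{C}_\lambda$ is absorbed in the proof of \cref{T03}.

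Once the $\theta = 0$ case is in hand, the endpoint $\theta = 1$ follows from the algebraic identity
\[
\widetilde{V}_\lambda^{-1}A_1 = I - \lambda^2\widetilde{V}_\lambda^{-1}(I+K_\lambda) - 2\rho\lambda\,\widetilde{V}_\lambda^{-1}A_1^{1/4}
\]
applied from the left by $A_1^{-1/8}$, the second term being estimated using $\theta = 0$ and the third using \cref{Cor05072017} at $\theta_\text{cor} = 0,\ \beta_\text{cor} = 1/4$; the endpoint $\theta = -1/8$ is proved by the same spectral-dichotomy strategy as $\theta = 0$, now applied to $\|A_1^{1/8}\widetilde{V}_\lambda\eta\|_{\mathcal{H}_\mathcal{S}}^2 \geq c|\lambda|^4\|A_1^{-1/8}\eta\|_{\mathcal{H}_\mathcal{S}}^2$ (the model computation shows that at every scale of $\mu$ one has $\mu^{1/2}|p(\mu)|^2 \gtrsim |\lambda|^4$). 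Complex interpolation between these three endpoints then yields the full range $\theta\in[-1/8,1]$ in \eqref{EstV-22}, and \eqref{EstV-22Adj} follows by the adjoint identity recalled above. The main obstacle will be carrying out the spectral dichotomy for the true operator $\widetilde{V}_\lambda$ (rather than its diagonal ``model'' $\lambda^2 I + 2\rho\lambda A_1^{1/4}+A_1$): the commutator errors produced by $K_\lambda$ must be absorbed without losing the decisive gain of $|\lambda|^{-1/4}$.
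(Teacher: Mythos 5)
Your plan has the right skeleton (proving the three endpoints $\theta\in\{-1/8,0,1\}$, deducing \eqref{EstV-22Adj} by the adjoint identity via \eqref{gev5.3}, interpolating) and your model computations at the three endpoints are numerically correct. But for the two hard endpoints $\theta=0$ and $\theta=-1/8$ you take a genuinely different and substantially heavier route than the paper: you propose to re-open the lower-bound inequality \eqref{np0002imp} from the proof of \cref{T03} and run a spectral dichotomy in $A_1$ at the resonant scale $\mu\sim|\lambda|^2$, re-absorbing $K_\lambda$ via \cref{2017Commutateur1}. The paper never revisits \eqref{np0002imp}. Instead, it treats \cref{T03} as a black box and proves $\theta=0$ by a purely algebraic observation: by \eqref{eq25112018-2} applied with $\theta=-1/4$, one has
\[
\|A_1^{-1/8}\widetilde{V}_\lambda^{-1}\|_{\mathcal{L}(\mathcal{H}_\mathcal{S})}\leq C\|A_1^{-1/8}(I+K_\lambda)\widetilde{V}_\lambda^{-1}\|_{\mathcal{L}(\mathcal{H}_\mathcal{S})},
\]
and then the definition of $\widetilde{V}_\lambda$ gives the identity
\[
\lambda^2 A_1^{-1/8}(I+K_\lambda)\widetilde{V}_\lambda^{-1}=A_1^{-1/8}-\left(2\rho\lambda A_1^{1/8}+A_1^{7/8}\right)\widetilde{V}_\lambda^{-1},
\]
whose right side is controlled by \eqref{EstV-CasComm} at $\theta=1/8$ and $\theta=7/8$, yielding at once $|\lambda|^2\|A_1^{-1/8}\widetilde{V}_\lambda^{-1}\|\leq C|\lambda|^{1/4}$. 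The endpoint $\theta=-1/8$ is then obtained by iterating the same identity composed on the right with $A_1^{-1/8}$, invoking the just-proved \eqref{EstV-22} at $\theta=1/8,7/8$. This factorization-through-$(I+K_\lambda)$ trick buys exactly the extra factor $|\lambda|^{-1/4}$ you identified as the essence of the proposition, with no need to touch the commutator analysis again.

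Your spectral-dichotomy route is not obviously wrong, but you should be aware that making it rigorous is harder than it looks: $K_\lambda$ does not commute with the spectral projections of $A_1$, and the lower bound you need, $\|\widetilde{V}_\lambda\eta\|^2\geq c|\lambda|^{7/2}\|A_1^{-1/8}\eta\|^2$, is \emph{sharper at low frequencies} than \eqref{gev5.20} (it asks for $|\lambda|^{7/2}\|\eta\|^2$ there rather than $|\lambda|^3\|\eta\|^2$), so the error term $\mathcal{C}_\lambda$ must now be absorbed against a smaller quantity than in the proof of \cref{T03}. It is not clear that the commutator estimate of \cref{2017Commutateur1}, as stated, is strong enough for this sharper absorption at the resonant frequency band. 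Your $\theta=1$ step (applying $A_1^{-1/8}$ to the expansion of $\widetilde{V}_\lambda^{-1}A_1$ and using the $\theta=0$ bound plus \cref{Cor05072017}) is essentially the same as the paper's, modulo taking adjoints, and is fine as written.
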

\begin{proof}
In a first step, we prove the case $\theta=0$. For that we first observe that \eqref{eq25112018-2} with $\theta=-1/4$ yields:
\begin{equation}\label{eq-14042019}
\|A_1^{-1/8}\widetilde{V}_{\lambda}^{-1}\|_{\mathcal{L}(\mathcal{H}_{\mathcal{S}})}\leq C\|A_1^{-1/8}(I+K_\lambda)\widetilde{V}_{\lambda}^{-1}\|_{\mathcal{L}(\mathcal{H}_{\mathcal{S}})}.
\end{equation}
Next, we make the following calculations:
\begin{equation}\label{Eq-04-07-2017}
\lambda^2A_1^{-1/8} (I+K_\lambda) \widetilde{V}_{\lambda}^{-1}=A_1^{-1/8}-(2\rho \lambda A_1^{1/8}+A_1^{7/8})\widetilde{V}_{\lambda}^{-1}
\end{equation}
which yields with \eqref{EstV-CasComm} and \eqref{eq-14042019},
\begin{multline*}
|\lambda|^2 \|A_1^{-1/8}\widetilde{V}_{\lambda}^{-1}\|_{\mathcal{L}(\mathcal{H}_{\mathcal{S}})}\leq |\lambda|^2 \|A_1^{-1/8}  (I+K_\lambda) \widetilde{V}_{\lambda}^{-1}\|_{\mathcal{L}(\mathcal{H}_{\mathcal{S}})}\\
\leq  C \left(\|A_1^{-1/8}\|_{\mathcal{L}(\mathcal{H}_{\mathcal{S}})}+|\lambda|\|A_1^{1/8} \widetilde{V}_{\lambda}^{-1}\|_{\mathcal{L}(\mathcal{H}_{\mathcal{S}})}+\|A_1^{7/8}\widetilde{V}_{\lambda}^{-1}\|_{\mathcal{L}(\mathcal{H}_{\mathcal{S}})}\right)\leq C(1+|\lambda|^{-1/4}+|\lambda|^{1/4}).
\end{multline*}
This leads to \eqref{EstV-22} for $\theta=0$. Moreover, extimate \eqref{EstV-22} for $\theta=0$ but for $\widetilde{V}_{\lambda}^*$ instead of  $\widetilde{V}_{\lambda}$ follows analogously from \eqref{gev5.3}, and then \eqref{EstV-22Adj} for $\theta=0$ by a duality argument.

In a second step, let us prove the case $\theta=1$. For that we first observe,
$$
A_1 \widetilde{V}_{\lambda}^{-1}A_1^{-1/8}= A_1^{-1/8}-\lambda^2(I+K_\lambda)\widetilde{V}_{\lambda}^{-1}A_1^{-1/8}-2\rho \lambda A_1^{1/4} \widetilde{V}_{\lambda}^{-1}A_1^{-1/8}.
$$
Using \eqref{EstV-22Adj} for $\theta=0$ and \eqref{EstV-CasComm} for $\theta=1/4$ we obtain the estimate
 \begin{multline*}
 \|A_1 \widetilde{V}_{\lambda}^{-1}A_1^{-1/8}\|_{\mathcal{L}(\mathcal{H}_{\mathcal{S}})}\leq \| A_1^{-1/8}\|_{\mathcal{L}(\mathcal{H}_{\mathcal{S}})}+C\left(|\lambda|^2 \| \widetilde{V}_{\lambda}^{-1}A_1^{-1/8}\|_{\mathcal{L}(\mathcal{H}_{\mathcal{S}})}+|\lambda| \|A_1^{1/4} \widetilde{V}_{\lambda}^{-1}A_1^{-1/8}\|_{\mathcal{L}(\mathcal{H}_{\mathcal{S}})}\right)\\
 \leq C(1+|\lambda|^{1/4})\leq C|\lambda|^{1/4},
 \end{multline*}
where in the last inequality we have used the fact that $\lambda\in \mathbb{C}_\alpha^+$ with $\alpha>0$. Then \eqref{EstV-22Adj} for $\theta=1$ is proved. Finally, \eqref{EstV-22} for $\theta=1$ follows by duality, and \eqref{EstV-22}, \eqref{EstV-22Adj} for $\theta\in (0,1)$ follow by interpolation.
 
It remains to prove the case $\theta=-1/8$. The case $\theta\in (-1/8,0)$ will then follow by interpolation.  We come back to \eqref{Eq-04-07-2017} from which we deduce
\begin{equation}
\lambda^2A_1^{-1/8} (I+K_\lambda) \widetilde{V}_{\lambda}^{-1}A_1^{-1/8}
=A_1^{-1/4}-(2\rho \lambda A_1^{1/8}+A_1^{7/8})\widetilde{V}_{\lambda}^{-1}A_1^{-1/8},
\end{equation}
and with \eqref{eq-14042019}, \eqref{EstV-22} with $\theta=1/8$ and $\theta=7/8$, we get
\begin{multline*}
|\lambda|^2 \|A_1^{-1/8}\widetilde{V}_{\lambda}^{-1}A_1^{-1/8}\|_{\mathcal{L}(\mathcal{H}_{\mathcal{S}})}\leq |\lambda|^2 \|A_1^{-1/8}(I+K_\lambda) \widetilde{V}_{\lambda}^{-1}A_1^{-1/8}\|_{\mathcal{L}(\mathcal{H}_{\mathcal{S}})}\\
\leq  C \left(\|A_1^{-1/4}\|_{\mathcal{L}(\mathcal{H}_{\mathcal{S}})}+|\lambda|\|A_1^{1/8} \widetilde{V}_{\lambda}^{-1}A_1^{-1/8}\|_{\mathcal{L}(\mathcal{H}_{\mathcal{S}})}+\|A_1^{7/8}\widetilde{V}_{\lambda}^{-1}A_1^{-1/8}\|_{\mathcal{L}(\mathcal{H}_{\mathcal{S}})}\right)\leq C(1+|\lambda|^{-1/2})\leq C.
\end{multline*}
Then the case $\theta=-1/8$ is proved.
\end{proof}

\begin{Corollary}\label{C01Final}
Let $\alpha>0$ be given in Theorem \ref{T03}. For $\theta\in [-1/8,1]$ and $\beta\in [-1/8,1]$ such that $\theta+\beta\leq 1$ the following estimate holds
\begin{equation}\label{EstFinale}
\sup_{\lambda\in \mathbb{C}^+_{\alpha}}|\lambda|^{3/2-2\theta-2\beta} \|A_1^\theta \widetilde{V}_{\lambda}^{-1}A_1^\beta\|_{\mathcal{L}(\mathcal{H}_{\mathcal{S}})}<+\infty.
\end{equation} 
\end{Corollary}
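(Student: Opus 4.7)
\textbf{Proof plan for Corollary \ref{C01Final}.}
The corollary is a pure interpolation statement: no new analytic input is required beyond the bounds already proved in Corollary \ref{Cor05072017} and Proposition \ref{C01}. The underlying fact I will use throughout is that $A_1$ is positive and self-adjoint on $\mathcal{H}_{\mathcal{S}}$, so its fractional powers satisfy the moment inequality
$$
\|A_1^{(1-s)\mu_0+s\mu_1}u\|_{\mathcal{H}_{\mathcal{S}}}\leq \|A_1^{\mu_0}u\|_{\mathcal{H}_{\mathcal{S}}}^{1-s}\|A_1^{\mu_1}u\|_{\mathcal{H}_{\mathcal{S}}}^{s},\qquad s\in[0,1],
$$
for every $u$ in the intersection of the relevant domains.

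Write $\Phi(\theta,\beta)=\|A_1^\theta\widetilde V_\lambda^{-1}A_1^\beta\|_{\mathcal{L}(\mathcal{H}_{\mathcal{S}})}$ and $\psi(\theta,\beta)=3/2-2\theta-2\beta$. Applied to $u=\widetilde V_\lambda^{-1}A_1^\beta v$, the moment inequality gives (for fixed $\beta$) the first interpolation rule
$$
\Phi((1-s)\theta_0+s\theta_1,\beta)\leq \Phi(\theta_0,\beta)^{1-s}\Phi(\theta_1,\beta)^s.
$$
Since $\psi$ is affine in $\theta$, the corresponding $|\lambda|$-powers combine multiplicatively, so bounds of the form $|\lambda|^{\psi(\theta_i,\beta)}\Phi(\theta_i,\beta)\leq C$ at $\theta_0$ and $\theta_1$ automatically propagate along the whole segment. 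Symmetrically, using the identity $\Phi(\theta,\beta)=\|A_1^\beta\widetilde V_\lambda^{*-1}A_1^\theta\|_{\mathcal{L}(\mathcal{H}_{\mathcal{S}})}$ and the moment inequality applied to $\widetilde V_\lambda^{*-1}A_1^\theta v$, one obtains the same interpolation rule with the roles of $\theta$ and $\beta$ exchanged.

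With these tools I cover the domain $\{(\theta,\beta)\in[-1/8,1]^2:\theta+\beta\leq 1\}$ in four pieces. (i) On the ``positive square'' $[0,1]^2\cap\{\theta+\beta\leq 1\}$ the estimate is exactly Corollary \ref{Cor05072017}. (ii) On the strip $\{-1/8\}\times[-1/8,1]$ it is \eqref{EstV-22} of Proposition \ref{C01}, and on $[-1/8,1]\times\{-1/8\}$ it is \eqref{EstV-22Adj}. (iii) On the mixed region $[-1/8,0]\times[0,1]$ (with $\theta+\beta\leq 1$), for each fixed $\beta$ I interpolate in $\theta$ between $\theta=-1/8$ (case (ii), weight $|\lambda|^{7/4-2\beta}$) and $\theta=0$ (case (i), weight $|\lambda|^{3/2-2\beta}$); affinity of $\psi$ in $\theta$ gives the target weight $|\lambda|^{3/2-2\theta-2\beta}$. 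The region $[0,1]\times[-1/8,0]$ is treated identically by interpolating in $\beta$. (iv) On the corner square $[-1/8,0]^2$ I first apply case (iii) with $\theta=0$, $\beta\in[-1/8,0]$ (obtained by interpolating in $\beta$ between $\beta=-1/8$ from \eqref{EstV-22Adj} and $\beta=0$ from Corollary \ref{Cor05072017}), and then for each $\beta\in[-1/8,0]$ I interpolate in $\theta$ between $\theta=-1/8$ (from \eqref{EstV-22}) and $\theta=0$ (just obtained).

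The substantive ingredients—the commutator estimate \cref{2017Commutateur1}, the approximation \cref{T03}, and the leverage of the commutator in \cref{C01}—are already in place; the only work here is checking that the $|\lambda|$-powers match at the endpoints of each interpolation segment, which they do because $\psi$ is affine and the exponents in Proposition \ref{C01} were chosen precisely so that $|\lambda|^{7/4-2\mu}=|\lambda|^{\psi(-1/8,\mu)}=|\lambda|^{\psi(\mu,-1/8)}$. Consequently this step is essentially bookkeeping; the main obstacle (had there been one) would have been producing the bounds at the negative endpoint $-1/8$, and that has already been absorbed into the proof of \cref{C01}.
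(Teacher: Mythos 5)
Your proof is correct and follows the same interpolation strategy as the paper's: both cover the region $\{(\theta,\beta)\in[-1/8,1]^2:\theta+\beta\le 1\}$ by interpolating \eqref{EstV-CasComm2}, \eqref{EstV-22} and \eqref{EstV-22Adj}, using that the weight $3/2-2\theta-2\beta$ is affine so the $|\lambda|$-powers propagate along interpolation segments. Your region decomposition is a bit more granular than the paper's two-step version (which first handles $\theta\in[0,1]$, $\beta\in[-1/8,1]$ and then interpolates in $\theta$ down to $-1/8$), but the ingredients and the argument are essentially identical.
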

\begin{proof}
First, assume $\theta\in [0,1]$. From \eqref{EstV-CasComm2} with $\beta=0$ and \eqref{EstV-22Adj} with an interpolation argument 
we deduce \eqref{EstFinale} for $\theta\in [0,1]$ and $\beta\in [-1/8,0]$. Then with \eqref{EstV-CasComm2} we deduce 
\eqref{EstFinale} for $\theta\in [0,1]$ and $\beta\in [-1/8,1]$. 

Finally, interpolating \eqref{EstFinale} with $\theta=0$ and $\beta\in [-1/8,1]$ and \eqref{EstV-22} with $\theta=\beta$ allows us to obtain \eqref{EstFinale} for $\theta\in[-1/8,0]$ and $\beta\in [-1/8,1]$.

\end{proof}
\section{Proof of Theorem \ref{CorMain}}\label{sec_res}
The goal of this section it to prove the Gevrey type resolvent estimates for the operator $A_0$ defined by \eqref{fs3.2}--\eqref{fs3.3}.
In order to prove Theorem \ref{CorMain}, we rewrite the resolvent equation in a more convenient way.
Assume $\lambda\in \mathbb{C}_\alpha^{+}$ for $\alpha>0$ given in Theorem \ref{T03} and $[f,g,h]\in \mathcal{H}$. We set 
$[v, \eta_1,\eta_2]\ov (\lambda-A_0)^{-1} [f,g,h]$ so that
\begin{equation}\label{gev0.4}
\left\{\begin{array}{c}
\lambda v -\div \mathbb{T}(v,p) = f \quad \text{in} \ \mathcal{F},\\
\div v =   0   \quad \text{in} \  \mathcal{F},\\
v = \Lambda \eta_2 \quad \text{on} \  \partial \mathcal{F},\\
\lambda \eta_1 -\eta_2 = g\\
\lambda \eta_2 + A_1 \eta_1
  =- \Lambda^*\left\{\mathbb{T}(v,p)n_{|\partial\mathcal{F}}\right\}+h.
\end{array}\right.
\end{equation}

Using $W_{\lambda}$ and $\mathbb{A}$ introduced in \eqref{bea0.1} and \eqref{gev9.4}, we can decompose the fluid velocity of \eqref{gev0.4} as 
$$
v= W_{\lambda}\eta_2+(\lambda I-\mathbb{A})^{-1}\mathbb{P}f,
$$
and using $L_{\lambda}\in \mathcal{L}(\mathcal{D}(A_1^{3/8}),\mathcal{D}(A_1^{1/8}))$ and 
$\mathcal{T}_{\lambda}\in \mathcal{L}({\bf L}^2(\mathcal{F}),\mathcal{D}(A_1^{1/8}))$ defined by \eqref{DefG} and \eqref{np0056} we can rewrite
 system \eqref{gev0.4} as
\begin{equation}\label{gev0.8}
\left\{\begin{array}{c}
\lambda \eta_1 -\eta_2 = g\\
\lambda \eta_2 + A_1 \eta_1+L_{\lambda}\eta_2
  =\mathcal{T}_{\lambda}f+h.
\end{array}\right.
\end{equation}
This writes
\begin{equation}\label{gev1.1}
\left(\lambda I +\mathcal{A}_\lambda\right) \begin{bmatrix}
\eta_1 \\ \eta_2
\end{bmatrix}
=\begin{bmatrix}
g \\ \mathcal{T}_{\lambda}f+h
\end{bmatrix}
\end{equation}
with
\begin{equation}\label{gev1.0}
\mathcal{A}_\lambda \ov
\begin{bmatrix}
0 & -I \\
A_1 & L_{\lambda}
\end{bmatrix}.
\end{equation}
We recall that $V_{\lambda}$ defined by \eqref{gev9.8} is invertible. It is a consequence of the following result proved in Proposition 4.8 of \cite{plat}.
\begin{Proposition}\label{Vinv}
For all $\lambda\in \mathbb{C}^+$ the operator $V_{\lambda}$ is an isomorphism from $\mathcal{D}(A_1)$ onto $\mathcal{H}_S$. 
\end{Proposition}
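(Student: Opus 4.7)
The plan is to write $V_\lambda = A_1 + B_\lambda$ with $B_\lambda := \lambda^2(I+K_\lambda) + \lambda G_\lambda$ and to establish three facts: $V_\lambda$ is bounded from $\mathcal{D}(A_1)$ to $\mathcal{H}_S$, it is injective, and $B_\lambda$ is a compact perturbation of $A_1$. Since $A_1 \colon \mathcal{D}(A_1)\to\mathcal{H}_S$ is a bounded isomorphism, the compact perturbation makes $V_\lambda$ Fredholm of index zero; combined with injectivity this forces bijectivity, and the bounded inverse theorem finishes the job.

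Boundedness is routine. The operator $K_\lambda$ is bounded on $\mathcal{H}_S$ by Proposition \ref{P04}, the definition before that proposition shows $G_\lambda \in \mathcal{L}(\mathcal{D}(A_1^{3/8}), \mathcal{D}(A_1^{1/8}))$, and the continuous inclusions $\mathcal{D}(A_1) \hookrightarrow \mathcal{D}(A_1^{3/8})$ and $\mathcal{D}(A_1^{1/8}) \hookrightarrow \mathcal{H}_S$ close the argument.

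For injectivity, the case $\lambda=0$ is trivial since $V_0=A_1$, so assume $\lambda\neq 0$, take $\eta\in\mathcal{D}(A_1)$ with $V_\lambda\eta = 0$, and pair with $\eta$ in $\mathcal{H}_S$. Setting
\[
P := \|\eta\|_{\mathcal{H}_S}^2 + \langle K_\lambda\eta,\eta\rangle, \qquad Q := \langle G_\lambda\eta,\eta\rangle, \qquad R := \|A_1^{1/2}\eta\|_{\mathcal{H}_S}^2,
\]
Proposition \ref{P04} gives $P \geq \|\eta\|_{\mathcal{H}_S}^2 \geq 0$, $Q \geq \rho_1\|A_1^{1/8}\eta\|_{\mathcal{H}_S}^2 \geq 0$, and $R\geq 0$, while the identity $\lambda^2 P + \lambda Q + R = 0$ with $\lambda = a+ib$, $a\geq 0$, splits into
\[
(a^2-b^2)P + aQ + R = 0, \qquad b(2aP + Q) = 0.
\]
If $b = 0$ the first equation is a sum of nonnegative terms, forcing $R=0$ and hence $\eta=0$. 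If $b\neq 0$ the second equation gives $2aP+Q=0$; by nonnegativity $Q=0$, and the lower bound for $G_\lambda$ then forces $\eta=0$.

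The compactness of $B_\lambda$ follows from two compact embeddings. The piece $\lambda^2(I+K_\lambda)\colon \mathcal{D}(A_1)\to\mathcal{H}_S$ factors through the compact Rellich embedding $\mathcal{D}(A_1)=H^4(\mathcal{I})\cap L^2_0(\mathcal{I})\hookrightarrow\mathcal{H}_S$ composed with the bounded operator $I+K_\lambda$ on $\mathcal{H}_S$. The piece $\lambda G_\lambda$ maps $\mathcal{D}(A_1)$ boundedly into $\mathcal{D}(A_1^{1/8})$, which embeds compactly into $\mathcal{H}_S$. Hence $B_\lambda$ is compact and the Fredholm argument concludes. The only delicate point in the whole scheme is the case analysis in the injectivity step: when $\Re\lambda=0$ with $\lambda\neq 0$, it is the strict positivity bound $Q\geq\rho_1\|A_1^{1/8}\eta\|^2$ from Proposition \ref{P04} that rules out nontrivial kernel elements; everything else is routine compactness bookkeeping.
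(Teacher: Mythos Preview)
Your argument is sound in structure and the paper itself does not prove this proposition here---it simply cites Proposition~4.8 of \cite{plat}---so your self-contained Fredholm argument is a genuine addition rather than a duplication.

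There is one small gap to patch. In the injectivity step on the imaginary axis ($\Re\lambda=0$, $\lambda\neq 0$) you invoke the quantitative lower bound $\langle G_\lambda\eta,\eta\rangle\geq \rho_1\|A_1^{1/8}\eta\|_{\mathcal{H}_S}^2$ from Proposition~\ref{P04}, but as stated in the paper that inequality is asserted only for $\Re\lambda>0$. The fix is immediate: go back to the definition $\langle G_\lambda\eta,\eta\rangle = 2\nu\int_{\mathcal{F}}|Dw_\eta|^2\,dy$, valid for all $\lambda\in\mathbb{C}^+$. If this vanishes then $Dw_\eta=0$, so $w_\eta$ is a rigid motion; since $w_\eta=0$ on $\Gamma_{-1}$ we get $w_\eta\equiv 0$, hence $\Lambda\eta=0$ on $\partial\mathcal{F}$, which forces $M\eta=0$ and thus $\eta=0$ in $\mathcal{H}_S=L^2_0(\mathcal{I})$. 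With this replacement your case analysis is complete and the rest of the proof (boundedness, compactness of $B_\lambda$, Fredholm index zero) goes through exactly as written.
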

Hence, direct calculations lead to the following formulas for the inverse 
of $\lambda I +\mathcal{A}_{\lambda}$ and of $\lambda I-A_0$:
\begin{equation}\label{gev1.1BisBis}
\left(\lambda I +\mathcal{A}_{\lambda}\right)^{-1}=\begin{bmatrix}
\frac{I-V_{\lambda}^{-1}A_1}{\lambda} &V_{\lambda}^{-1} \\[2mm]
-V_{\lambda}^{-1}A_1 & \lambda V_{\lambda}^{-1}
\end{bmatrix},
\end{equation}
and 
\begin{equation}\label{EqresolventA0}
(\lambda I -A_0)^{-1}=  \displaystyle \begin{bmatrix}
(\lambda I-\mathbb{A})^{-1}\mathbb{P}+\lambda  W_{\lambda} V_{\lambda}^{-1}\mathcal{T}_{\lambda}& -W_{\lambda}V_{\lambda}^{-1}A_1 & \lambda W_{\lambda} V_{\lambda}^{-1}\\[2mm]
 V_{\lambda}^{-1} \mathcal{T}_{\lambda}& \frac{I-V_{\lambda}^{-1}A_1}{\lambda} &V_{\lambda}^{-1} \\[2mm]
\lambda V_{\lambda}^{-1}\mathcal{T}_{\lambda}  &-V_{\lambda}^{-1}A_1 & \lambda V_{\lambda}^{-1}
\end{bmatrix}.
\end{equation}

\subsection{Estimation of $V_{\lambda}^{-1}$} 
In this section, we estimate the inverse of the operator $V_{\lambda}$ defined in \eqref{gev9.8} for $\lambda\in \mathbb{C}_\alpha^+$ and $\alpha>0$ given in Theorem \ref{T03}. 
From now on, we fix $\rho<\rho_1/4$ where $\rho_{1}$ is defined in \cref{P04}.
The main result of this section is the following:
\begin{Theorem}\label{ThmEstV} Let $\alpha>0$ be given in Theorem \ref{T03}. For $\theta\in [-1/8,7/8]$ and $\beta\in [-1/8,7/8]$ such that $\theta+\beta\leq 1$ the following estimate holds
\begin{equation}\label{EstV-2}
\sup_{\lambda\in \mathbb{C}^+_{\alpha}}|\lambda|^{3/2-2\theta-2\beta} \|A_1^{\theta} V_{\lambda}^{-1}A_1^{\beta}\|_{\mathcal{L}(\mathcal{H}_{\mathcal{S}})}<+\infty.
\end{equation} 
\end{Theorem}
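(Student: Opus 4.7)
The plan is to imitate, for $V_{\lambda}$, the chain \cref{T03}$\to$\cref{Cor05072017}$\to$\cref{C01}$\to$\cref{C01Final} developed for $\widetilde{V}_{\lambda}$. The starting point is
\[
V_{\lambda} \;=\; \widetilde{V}_{\lambda} + \lambda\bigl(G_{\lambda} - 2\rho A_{1}^{1/4}\bigr),
\]
together with the choice $\rho<\rho_{1}/4$: by \cref{P04} the perturbation $G_{\lambda}-2\rho A_{1}^{1/4}$ has a quadratic form bounded below by $(\rho_{1}/2)\|A_{1}^{1/8}\eta\|_{\mathcal{H}_{\mathcal{S}}}^{2}$, so it will play the role that $2\rho A_{1}^{1/4}$ alone played in the $\widetilde{V}_{\lambda}$ analysis.

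First I would redo the proof of \cref{T03} for $V_{\lambda}$ by expanding
\[
\Bigl\|\tfrac{V_{\lambda}\eta}{\lambda^{2}}\Bigr\|_{\mathcal{H}_{\mathcal{S}}}^{2}
\;=\;
\Bigl\|(I+K_{\lambda})\eta + \tfrac{G_{\lambda}\eta}{\lambda} + \tfrac{A_{1}\eta}{\lambda^{2}}\Bigr\|_{\mathcal{H}_{\mathcal{S}}}^{2}
\]
and computing the three squared norms and the three cross terms. The crucial cross term $2\Re\langle(I+K_{\lambda})\eta, G_{\lambda}\eta/\lambda\rangle$ produces, by self-adjointness of $G_{\lambda}$ and its lower bound from \cref{P04}, a positive contribution $2\Re(1/\bar\lambda)\rho_{1}\|A_{1}^{1/8}\eta\|^{2}$, which plays exactly the role of the $4\rho\Re(1/\lambda)\|A_{1}^{1/8}\eta\|^{2}$ term in \cref{T03}. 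The residual mixed term $\Re(1/\bar\lambda)\langle K_{\lambda}\eta, G_{\lambda}\eta\rangle$ is controlled using \cref{P03}, the upper bound of \cref{P04} on $G_{\lambda}$, and the commutator estimate \cref{2017Commutateur1}, by an absorption argument analogous to the treatment of $\langle A_{1}\eta,(I+K_{\lambda})\eta\rangle$ in \cref{T03}. This yields $\|V_{\lambda}\eta\|^{2}\geq C(|\lambda|^{3}\|\eta\|^{2}+|\lambda|^{-1}\|A_{1}\eta\|^{2})$ for $\alpha$ large; combined with the invertibility of $V_{\lambda}$ from \cref{Vinv}, this delivers \eqref{EstV-2} at $(\theta,\beta)=(0,0)$ and $(1,0)$. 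Duality and interpolation as in \cref{Cor05072017} then cover all $\theta,\beta\in[0,1]$ with $\theta+\beta\leq 1$.

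To reach negative exponents I imitate \cref{C01}. From the algebraic identity
\[
\lambda^{2}A_{1}^{-1/8}(I+K_{\lambda})V_{\lambda}^{-1}A_{1}^{\beta}
=
A_{1}^{\beta-1/8}
-\lambda A_{1}^{-1/8}G_{\lambda}V_{\lambda}^{-1}A_{1}^{\beta}
-A_{1}^{7/8}V_{\lambda}^{-1}A_{1}^{\beta},
\]
together with \cref{P02} applied to $(I+K_{\lambda})^{-1}$ on negative Sobolev scales and the mapping property $G_{\lambda}\in\mathcal{L}(\mathcal{D}(A_{1}^{3/8}),\mathcal{D}(A_{1}^{1/8}))$ from \cref{P04}, I obtain the endpoint $\theta=-1/8$ in terms of bounds on $\|A_{1}^{3/8}V_{\lambda}^{-1}A_{1}^{\beta}\|$ and $\|A_{1}^{7/8}V_{\lambda}^{-1}A_{1}^{\beta}\|$ already established. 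The symmetric endpoint $\theta=7/8$ follows by a dual identity. Interpolation fills the remaining cases, giving \eqref{EstV-2} on $[-1/8,7/8]^{2}$ with $\theta+\beta\leq 1$. The ceiling at $7/8$ (versus $1$ for $\widetilde{V}_{\lambda}$ in \cref{C01Final}) reflects the fact that $G_{\lambda}$ only gains $1/4$ of $A_{1}$-regularity on the specific scale $\mathcal{D}(A_{1}^{3/8})\to\mathcal{D}(A_{1}^{1/8})$, in contrast to the unrestricted operator $A_{1}^{1/4}$.

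The main obstacle I anticipate lies in the first step: controlling the residual cross term $\Re(1/\bar\lambda)\langle K_{\lambda}\eta,G_{\lambda}\eta\rangle$. Unlike the $\widetilde{V}_{\lambda}$ analysis, where a single commutator $[A_{1}^{3/8},K_{\lambda}]$ suffices, here both $K_{\lambda}$ and $G_{\lambda}$ are $\lambda$-dependent with limited mapping regularity, so extracting the small factor (in powers of $|\lambda|$) that permits absorption into $|\lambda|^{3}\|\eta\|^{2}$ and $|\lambda|^{-1}\|A_{1}\eta\|^{2}$ requires careful bookkeeping. Nevertheless, the precise upper bound on $\langle G_{\lambda}\eta,\eta\rangle$ in \cref{P04} (with its $|\lambda|\|A_{1}^{-1/8}\eta\|^{2}$ correction), the mapping estimates of \cref{P03} on $K_{\lambda}$, and the commutator estimate \cref{2017Commutateur1} are tuned to exactly the thresholds needed, so the absorption should close for $\alpha$ sufficiently large.
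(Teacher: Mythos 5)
Your proposal takes a genuinely different route from the paper, and that route has a gap at the very first step.

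The paper does not re-run the \cref{T03} analysis for $V_{\lambda}$. Instead it sets $S_{\lambda}\ov G_{\lambda}-2\rho A_1^{1/4}$ (positive by \cref{P04} and $\rho<\rho_1/4$), uses the algebraic identity $[I+\lambda\widetilde{V}_{\lambda}^{-1}S_{\lambda}]V_{\lambda}^{-1}=\widetilde{V}_{\lambda}^{-1}$, and the accretivity $\Re\langle\lambda\widetilde{V}_{\lambda}^{-1}\zeta,\zeta\rangle_{\mathcal{H}_{\mathcal{S}}}\geq 0$ from \eqref{gev5.7} to derive the contraction $\|S_{\lambda}^{1/2}V_{\lambda}^{-1}\eta\|\leq\|S_{\lambda}^{1/2}\widetilde{V}_{\lambda}^{-1}\eta\|$. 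The estimate \eqref{EstV-2} then falls out of the factorization $V_{\lambda}^{-1}=\widetilde{V}_{\lambda}^{-1}-\lambda\widetilde{V}_{\lambda}^{-1}S_{\lambda}^{1/2}\cdot S_{\lambda}^{1/2}V_{\lambda}^{-1}$ together with the bounds on $\|A_1^{\theta}\widetilde{V}_{\lambda}^{-1}S_{\lambda}^{1/2}\|$ and $\|S_{\lambda}^{1/2}\widetilde{V}_{\lambda}^{-1}A_1^{\beta}\|$ obtainable from \cref{C01Final} and the upper bound on $S_{\lambda}^{1/2}$. This is a soft perturbation argument; no new lower bound, commutator, or absorption is needed beyond what was already established for $\widetilde{V}_{\lambda}$.

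The gap in your direct approach is that it cannot produce the term $\|A_1^{1/4}\eta/\lambda\|_{\mathcal{H}_{\mathcal{S}}}^2$, which is the engine of the \cref{T03} argument. In \eqref{np0002} that term arises as the \emph{square} of the middle summand $2\rho(I+K_{\lambda})^{-1/2}A_1^{1/4}\eta/\lambda$, converted via \eqref{np0001}; it is then indispensable in \eqref{np0002imp}--\eqref{est062019} to absorb the cross term $|\langle A_1\eta/\lambda^2,(I+K_{\lambda})\eta\rangle|$, which scales like $|\lambda|\,\|A_1^{1/4}\eta/\lambda\|^2$. If you replace $2\rho A_1^{1/4}$ by $G_{\lambda}$, the square of the middle term becomes $\|(I+K_{\lambda})^{-1/2}G_{\lambda}\eta/\lambda\|^2$, and the form inequality $\langle G_{\lambda}\eta,\eta\rangle\geq\rho_1\|A_1^{1/8}\eta\|^2$ from \cref{P04} gives $G_{\lambda}\geq\rho_1 A_1^{1/4}$ as quadratic forms but does \emph{not} give $\|G_{\lambda}\eta\|\geq c\|A_1^{1/4}\eta\|$ for non-commuting self-adjoint operators. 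Moreover, the cross term you flag as "crucial," namely $2\Re(1/\bar\lambda)\rho_1\|A_1^{1/8}\eta\|^2$, carries the factor $\Re\lambda/|\lambda|^2$, which degenerates as $\lambda$ approaches the imaginary axis --- exactly where Gevrey resolvent estimates are hardest. In the paper's proof of \cref{T03}, the analogous term $4\rho\Re(1/\lambda)\|A_1^{1/8}\eta\|^2$ is simply dropped from the lower bound because it is non-negative; it never does any work. So your first step would fail to yield $\|V_{\lambda}\eta\|^2\geq C(|\lambda|^3\|\eta\|^2+|\lambda|^{-1}\|A_1\eta\|^2)$, and the rest of the chain (duality, interpolation, negative exponents) rests on that. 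The paper's introduction of $\widetilde{V}_{\lambda}$ with the concrete operator $A_1^{1/4}$ in place of $G_{\lambda}$ is precisely the device that sidesteps this obstruction, and the $S_{\lambda}$-weighted perturbation then transfers the estimates back to $V_{\lambda}$ without ever having to lower-bound $\|G_{\lambda}\eta\|$.
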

\begin{proof}
Comparing \eqref{gev9.8} and \eqref{gev2.3}, we see that
$$
V_{\lambda}-\widetilde{V}_{\lambda}=\lambda  S_{\lambda}, \quad S_{\lambda}\ov G_{\lambda}-2\rho A_1^{1/4}
$$
and thus
\begin{equation}\label{eq01032017}
[I+\lambda \widetilde{V}_{\lambda}^{-1}  S_{\lambda}]V_{\lambda}^{-1}=\widetilde{V}_{\lambda}^{-1}.
\end{equation}
We thus need to estimate the inverse of $[I+\lambda \widetilde{V}_{\lambda}^{-1}  S_{\lambda}]$. 

From \cref{P04} and in particular \eqref{eq28-11-1a}, we have $S_{\lambda}$ is a positive self-adjoint operator satisfying
$$
\|S^{1/2}_{\lambda}\eta\|_{\mathcal{H}_{\mathcal{S}}}\leq C\left(\|A_1^{1/8}\eta\|_{\mathcal{H}_{\mathcal{S}}}+|\lambda|^{1/2}\|A_1^{-1/8}\eta\|_{\mathcal{H}_{\mathcal{S}}}\right)
\quad (\eta\in \mathcal{D}(A_1^{1/8})).
$$
Combining the above inequality with \eqref{EstV-CasComm2} and \eqref{EstV-22} we obtain for $\beta\in [-1/8,7/8]$,
\begin{equation}\label{eq01032017-3}
\|S^{1/2}_{\lambda}\widetilde{V}_{\lambda}^{-1}A_1^{\beta}\eta\|_{\mathcal{H}_{\mathcal{S}}}\leq C |\lambda|^{-5/4+2\beta}\|\eta\|_{\mathcal{H}_{\mathcal{S}}}.
\end{equation}
Analogously we can prove \eqref{eq01032017-3} but for $\widetilde{V}_{\lambda}^*$ instead of $\widetilde{V}_{\lambda}$. Then a duality argument yield for $\theta\in [-1/8,7/8]$
\begin{equation}\label{eq01032017-4}
\|A_1^\theta \widetilde{V}_{\lambda}^{-1}S^{1/2}_{\lambda}\eta\|_{\mathcal{H}_{\mathcal{S}}}\leq C |\lambda|^{-5/4+2\theta}\|\eta\|_{\mathcal{H}_{\mathcal{S}}}.
\end{equation}

From \eqref{gev2.3}, we obtain that for any $\lambda \in \mathbb{C}^+$ and for any $\zeta \in \mathcal{D}(A_1)$,
\begin{equation}\label{gev5.6}
\Re \langle  \widetilde{V}_{\lambda}  \zeta, \lambda \zeta \rangle_{\mathcal{H}_{\mathcal{S}}}
=\Re\lambda \|\lambda (I+K_{\lambda})^{1/2}\zeta\|_{\mathcal{H}_{\mathcal{S}}}^2
+2\rho\| \lambda A_1^{1/8}\zeta\|_{\mathcal{H}_{\mathcal{S}}}^2
+\Re\lambda\|A_1^{1/2}\zeta\|_{\mathcal{H}_{\mathcal{S}}}^2 \geq 0.
\end{equation}
In particular, for any $\lambda \in \mathbb{C}^+$ and for any $\zeta \in \mathcal{H}_{\mathcal{S}}$,
\begin{equation}\label{gev5.7}
\Re \langle \lambda \widetilde{V}_{\lambda}^{-1}  \zeta, \zeta\rangle_{\mathcal{H}_{\mathcal{S}}} \geq 0.
\end{equation}

Let us now consider the equation
\begin{equation}\label{eq01032017-5}
\eta+\lambda \widetilde{V}_{\lambda}^{-1}  S_{\lambda}\eta=f.
\end{equation}
If we multiply \eqref{eq01032017-5} by $S_{\lambda}\eta$, take the real part and use \eqref{gev5.7}, we obtain 
$$
\|S_{\lambda}^{1/2}\eta\|_{\mathcal{H}_{\mathcal{S}}}\leq \|S_{\lambda}^{1/2}f\|_{\mathcal{H}_{\mathcal{S}}}
$$ and applying such a result to  equality \eqref{eq01032017} yields
\begin{equation}\nonumber 
\forall \eta\in \mathcal{H}_{\mathcal{S}},\quad  \|S^{1/2}_{\lambda}V_{\lambda}^{-1}\eta \|_{\mathcal{H}_{\mathcal{S}}}\leq \|S^{1/2}_{\lambda}\widetilde{V}_{\lambda}^{-1}\eta \|_{\mathcal{H}_{\mathcal{S}}}.
\end{equation}
Thus, coming back to equality \eqref{eq01032017} we deduce that for $\eta\in \mathcal{H}_{\mathcal{S}}$, $\theta\in [-1/8,7/8]$, $\beta\in [-1/8,7/8]$, $\theta+\beta\leq 1$,
\begin{multline*}
\|A_1^\theta V_{\lambda}^{-1}A_1^\beta \eta \|_{\mathcal{H}_{\mathcal{S}}}\leq \|A_1^\theta \widetilde{V}_{\lambda}^{-1}A_1^\beta \eta \|_{\mathcal{H}_{\mathcal{S}}}
	+|\lambda|\|A_1^\theta \widetilde{V}_{\lambda}^{-1}S_{\lambda}V_{\lambda}^{-1}A_1^\beta \eta\|_{\mathcal{H}_{\mathcal{S}}}\\
\leq \|A_1^\theta \widetilde{V}_{\lambda}^{-1}A_1^\beta \eta \|_{\mathcal{H}_{\mathcal{S}}}+|\lambda|\|A_1^\theta \widetilde{V}_{\lambda}^{-1}S^{1/2}_{\lambda}\|_{\mathcal{L}(\mathcal{H}_{\mathcal{S}})}\|S^{1/2}_{\lambda}V_{\lambda}^{-1}A_1^\beta\eta\|_{\mathcal{H}_{\mathcal{S}}}\\
\leq \|A_1^\theta \widetilde{V}_{\lambda}^{-1}A_1^\beta \eta \|_{\mathcal{H}_{\mathcal{S}}}+|\lambda|\|A_1^\theta \widetilde{V}_{\lambda}^{-1}S^{1/2}_{\lambda}\|_{\mathcal{L}(\mathcal{H}_{\mathcal{S}})}\|S^{1/2}_{\lambda}\widetilde{V}_{\lambda}^{-1}A_1^\beta \eta\|_{\mathcal{H}_{\mathcal{S}}}
\end{multline*}
Then using estimates \eqref{EstFinale}, \eqref{eq01032017-3} and \eqref{eq01032017-4} yields \eqref{EstV-2}.
\end{proof}
\subsection{Proof of Theorem \ref{CorMain}} 
\begin{proof}
First, the exponential stability of $(e^{A_0 t})_{t\geq 0}$ (see Proposition \ref{AStrongContSG}) and standard results (see \cite[p.101, Theorem 2.5]{RCIDS2ED}) yield 
that $\|(\lambda-A_0)^{-1}\|_{\mathcal{L}(\mathcal{H})}$ is uniformly bounded for $\lambda\in \mathbb{C}^+$. This implies that
$$
\sup_{\lambda\in \mathbb{C}^+,|\lambda|\leq \alpha} \left\{\|A_0(\lambda-A_0)^{-1}\|_{\mathcal{L}(\mathcal{H})}+ |\lambda| \|(\lambda-A_0)^{-1}\|_{\mathcal{L}(\mathcal{H})}\right\} <\infty.
$$
Using \eqref{gev7.4} and \eqref{fs3.4}, we deduce \eqref{ResEstA0-1} and \eqref{ResEstA0-3} for $\lambda \in \mathbb{C}^+$ with $|\lambda|\leq \alpha$.
In the remaining part of the proof, we can thus assume $\lambda\in \mathbb{C}_\alpha^+$ (see \eqref{gev2.4}) for $\alpha>0$ given in Theorem \ref{T03}. 
From \eqref{EstStokeslambdanh23novBis}, we have
\begin{equation}\label{11a}
|\lambda|\|(\lambda-\mathbb{A})^{-1}\mathbb P f\|_{{\bf L}^2(\mathcal{F})} \leq C \|f\|_{{\bf L}^2(\mathcal{F})}.
\end{equation}
From \eqref{eq23-06-2017-1-0},  \eqref{EstV-2} with $(\theta,\beta)=(0,0)$ and \eqref{np0056},
\begin{equation}\label{11b}
|\lambda|^{1/2} \|\lambda  W_{\lambda} V_{\lambda}^{-1}\mathcal{T}_{\lambda} f\|_{{\bf L}^2(\mathcal{F})} 
\leq C |\lambda|^{3/2} \|V_{\lambda}^{-1}\mathcal{T}_{\lambda} f\|_{\mathcal{H}_{\mathcal{S}}} 
\leq C \|\mathcal{T}_{\lambda} f\|_{\mathcal{H}_{\mathcal{S}}} 
\leq C \|f\|_{{\bf L}^2(\mathcal{F})}.
\end{equation}
From \eqref{EstV-2} with $(\theta,\beta)=(1/2,0)$ and \eqref{np0056},
\begin{equation}\label{21}
|\lambda|^{1/2} \|A_1^{1/2}V_{\lambda}^{-1} \mathcal{T}_{\lambda} f\|_{\mathcal{H}_{\mathcal{S}}} \leq C \|f\|_{{\bf L}^2(\mathcal{F})}.
\end{equation}
From \eqref{EstV-2} with $(\theta,\beta)=(0,0)$ and \eqref{np0056},
\begin{equation}\label{31}
|\lambda|^{1/2} \|\lambda V_{\lambda}^{-1} \mathcal{T}_{\lambda} f\|_{\mathcal{H}_{\mathcal{S}}} \leq C \|f\|_{{\bf L}^2(\mathcal{F})}.
\end{equation}
From \eqref{eq23-06-2017-1-0} and \eqref{EstV-2} with $(\theta,\beta)=(0,1/2)$
\begin{equation}\label{12}
|\lambda|^{1/2} \|W_{\lambda}V_{\lambda}^{-1}A_1 g\|_{{\bf L}^2(\mathcal{F})} 
\leq C |\lambda|^{1/2} \|V_{\lambda}^{-1}A_1 g\|_{\mathcal{H}_{\mathcal{S}}} 
\leq C \|A_1^{1/2} g\|_{\mathcal{H}_{\mathcal{S}}}.
\end{equation}
From \eqref{EstV-2} with $(\theta,\beta)=(1/2,1/2)$
\begin{equation}\label{22}
|\lambda|^{1/2} \left\|A_1^{1/2}\frac{I-V_{\lambda}^{-1}A_1}{\lambda} g \right\|_{\mathcal{H}_{\mathcal{S}}} 
\leq C \|A_1^{1/2} g\|_{\mathcal{H}_{\mathcal{S}}}.
\end{equation}
From \eqref{EstV-2} with $(\theta,\beta)=(0,1/2)$
\begin{equation}\label{32}
|\lambda|^{1/2} \left\|V_{\lambda}^{-1}A_1  g \right\|_{\mathcal{H}_{\mathcal{S}}} 
\leq C \|A_1^{1/2} g\|_{\mathcal{H}_{\mathcal{S}}}.
\end{equation}

From \eqref{eq23-06-2017-1-0} and \eqref{EstV-2} with $(\theta,\beta)=(0,0)$
\begin{equation}\label{13}
|\lambda|^{1/2} \|\lambda W_{\lambda} V_{\lambda}^{-1} h\|_{{\bf L}^2(\mathcal{F})} 
\leq C |\lambda|^{3/2} \|V_{\lambda}^{-1}h\|_{\mathcal{H}_{\mathcal{S}}} 
\leq C \|h\|_{\mathcal{H}_{\mathcal{S}}}.
\end{equation}
From \eqref{EstV-2} with $(\theta,\beta)=(1/2,0)$
\begin{equation}\label{23}
|\lambda|^{1/2} \left\|A_1^{1/2} V_{\lambda}^{-1} h\right\|_{\mathcal{H}_{\mathcal{S}}} 
\leq C \|h\|_{\mathcal{H}_{\mathcal{S}}}.
\end{equation}
From \eqref{EstV-2} with $(\theta,\beta)=(0,0)$
\begin{equation}\label{33}
|\lambda|^{1/2} \left\|\lambda V_{\lambda}^{-1} h\right\|_{\mathcal{H}_{\mathcal{S}}} 
\leq C \| h\|_{\mathcal{H}_{\mathcal{S}}}.
\end{equation}
Using  \eqref{EqresolventA0}, we deduce \eqref{ResEstA0-1}. 

\medskip

Next, let us prove \eqref{ResEstA0-3}. From \cref{prop23novBis}, \eqref{np0056}, \eqref{eq23-06-2017-1} with $\theta=1$, \eqref{EstV-2} with $(\theta,\beta)=(3/8,-1/8)$ and $(\theta,\beta)=(-1/8,-1/8)$, \eqref{EstV-2} with $(\theta,\beta)=(7/8,-1/8)$, \eqref{EstV-2} with $(\theta,\beta)=(3/8,-1/8)$, we deduce that
$$
\|(\lambda-\mathbb{A})^{-1}\mathbb{P} f\|_{{\bf H}^2(\mathcal{F})}
+\|\lambda  W_{\lambda} V_{\lambda}^{-1}\mathcal{T}_{\lambda} f\|_{{\bf H}^2(\mathcal{F})}+\|V_{\lambda}^{-1}\mathcal{T}_{\lambda} f\|_{\mathcal{D}(A_1^{7/8})}
+\|\lambda V_{\lambda}^{-1}\mathcal{T}_{\lambda} f\|_{\mathcal{D}(A_1^{3/8})} \leq C \|f\|_{{\bf L}^2(\mathcal{F})}.
$$
From \eqref{eq23-06-2017-1} with $\theta=1$, \eqref{EstV-2} with $(\theta,\beta)=(3/8,3/8)$ and $(\theta,\beta)=(-1/8,3/8)$, \eqref{EstV-2} with $(\theta,\beta)=(3/8,3/8)$,  we deduce,
\begin{equation}\label{eq25092019}
\|W_{\lambda}V_{\lambda}^{-1}A_1 g\|_{{\bf H}^2(\mathcal{F})}+\|V_{\lambda}^{-1}A_1 g\|_{\mathcal{D}(A_1^{3/8})}\leq C \|g\|_{\mathcal{D}(A_1^{5/8})}.
\end{equation}
From the relation $A_0(\lambda-A_0)^ {-1}=-I+\lambda (\lambda-A_0)^ {-1}$ with \eqref{ResEstA0-1} we first first obtain
$$
\sup_{\lambda\in \mathbb{C}_{\alpha}^+}|\lambda|^{-1/2}\|A_0(\lambda-A_0)^ {-1}\|_{\mathcal{L}(\mathcal{H})}<+\infty,
$$
and by interpolation with \eqref{ResEstA0-1} we get 
\begin{equation}\label{EstResA0-1demi}
\sup_{\lambda\in \mathbb{C}_{\alpha}^+}\|(-A_0)^{1/2}(\lambda-A_0)^ {-1}\|_{\mathcal{L}(\mathcal{H})}<+\infty.
\end{equation}
Thus using \eqref{EstResA0-1demi} we obtain  
$$
 \left\| (-A_0)^{3/4} (\lambda-A_0)^{-1}\begin{bmatrix}
0 \\ g \\ 0
\end{bmatrix}
\right\|_{\mathcal{H}}= \left\| (-A_0)^{1/2} (\lambda-A_0)^{-1}(-A_0)^{1/4}\begin{bmatrix}
0 \\ g \\ 0
\end{bmatrix}
\right\|_{\mathcal{H}}\leq C\left\| (-A_0)^{1/4}\begin{bmatrix}
0 \\ g \\ 0
\end{bmatrix}
\right\|_{\mathcal{H}}
$$
and thus from \cref{PropA} and \eqref{EqresolventA0}
$$
\|W_{\lambda}V_{\lambda}^{-1}A_1 g\|_{{\bf H}^{3/2}(\mathcal{F})}+\left\|\frac{I-V_{\lambda}^{-1}A_1}{\lambda}g\right \|_{\mathcal{D}(A_1^{7/8})}+\|V_{\lambda}^{-1}A_1 g\|_{\mathcal{D}(A_1^{3/8})}\leq C \|g\|_{\mathcal{D}(A_1^{5/8})}
$$
which gives 
$$
\left\|\frac{I-V_{\lambda}^{-1}A_1}{\lambda}g\right \|_{\mathcal{D}(A_1^{7/8})}\leq C \|g\|_{\mathcal{D}(A_1^{5/8})}.
$$
Then, since we also have \eqref{eq25092019}, we have proved
$$
\|W_{\lambda}V_{\lambda}^{-1}A_1 g\|_{{\bf H}^2(\mathcal{F})}+\left\|\frac{I-V_{\lambda}^{-1}A_1}{\lambda}g\right \|_{\mathcal{D}(A_1^{7/8})}+\|V_{\lambda}^{-1}A_1 g\|_{\mathcal{D}(A_1^{3/8})}\leq C \|g\|_{\mathcal{D}(A_1^{5/8})}.
$$
From \eqref{eq23-06-2017-1} with $\theta=1$, \eqref{EstV-2} with $(\theta,\beta)=(3/8,-1/8)$ and $(\theta,\beta)=(-1/8,-1/8)$, \eqref{EstV-2} with $(\theta,\beta)=(7/8,-1/8)$, \eqref{EstV-2} with $(\theta,\beta)=(3/8,-1/8)$, we deduce,
$$
\|\lambda W_{\lambda}V_{\lambda}^{-1}h\|_{{\bf H}^2(\mathcal{F})}+\left\|V_{\lambda}^{-1}h\right \|_{\mathcal{D}(A_1^{7/8})}+\|\lambda V_{\lambda}^{-1}h\|_{\mathcal{D}(A_1^{3/8})}\leq C \|h\|_{\mathcal{D}(A_1^{1/8})}.
$$
Then combining the above estimates we have proved 
\begin{multline}\label{ResEstA0-3-aux}
\left\| (\lambda I-A_0)^{-1}
z
\right\|_{{\bf H}^2(\mathcal{F})\times \mathcal{D}(A_1^{7/8})\times \mathcal{D}(A_1^{3/8})}
\leq C\left\|
z\right\|_{{\bf L}^2(\mathcal{F})\times \mathcal{D}(A_1^{5/8})\times \mathcal{D}(A_1^{1/8})}
\\
\left(
z
\in \mathcal{H}\cap \left({\bf L}^2(\mathcal{F})\times \mathcal{D}(A_1^{5/8})\times \mathcal{D}(A_1^{1/8})\right)\right).
\end{multline}

It remains to estimate in \eqref{ResEstA0-3} the term 
$
|\lambda|\left\| (\lambda-A_0)^{-1}z
\right\|_{{\bf L}^2(\mathcal{F})\times \mathcal{D}(A_1^{3/8})\times \mathcal{D}(A_1^{1/8})'}.
$
Assume
$$
[w,\xi_1,\xi_2]\in \mathcal{H}\cap \left({\bf H}^2(\mathcal{F})\times \mathcal{D}(A_1^{7/8})\times \mathcal{D}(A_1^{3/8})\right).
$$ 
First, from the continuity of $\Lambda^*: {\bf H}^{1/2}(\partial \mathcal{F})\to \mathcal{D}(A_1^{1/8})$ and a trace inequality we have,
$$
\|\Lambda^*(2 D(w)n)\|_{\mathcal{D}(A_1^{1/8})'}\leq C \|\Lambda^*(2 D(w)n)\|_{\mathcal{D}(A_1^{1/8})}\leq C \|D(w)n\|_{{\bf H}^{1/2}(\partial \mathcal{F})}\leq C \|w\|_{{\bf H}^{2}(\mathcal{F})}.
$$
From \eqref{RegTildeP-0} and the above estimate we deduce, 
\begin{multline*}
 \left\| A_0 \begin{bmatrix}
w \\ \xi_1 \\ \xi_2
\end{bmatrix}
\right\|_{{\bf L}^2(\mathcal{F})\times \mathcal{D}(A_1^{3/8})\times \mathcal{D}(A_1^{1/8})'} = \left\| P_0\begin{bmatrix}
 \Delta w \medskip\\ \medskip
\xi_2 \displaystyle \\ \medskip
 \displaystyle -A_1\xi_{1}-\Lambda^*(2 D(w)n)
\end{bmatrix}\right\|_{{\bf L}^2(\mathcal{F})\times \mathcal{D}(A_1^{3/8})\times \mathcal{D}(A_1^{1/8})'} \\
\leq C\left\|\begin{bmatrix}
 \Delta w \medskip\\ \medskip
\xi_2 \displaystyle \\ \medskip
 \displaystyle -A_1\xi_{1}-\Lambda^*(2 D(w)n)
\end{bmatrix}\right\|_{{\bf L}^2(\mathcal{F})\times \mathcal{D}(A_1^{3/8})\times \mathcal{D}(A_1^{1/8})'}
\leq C\left\|\begin{bmatrix}
w \\ \xi_1 \\ \xi_2
\end{bmatrix}
\right\|_{\mathcal{H}\cap \left({\bf H}^2(\mathcal{F})\times \mathcal{D}(A_1^{7/8})\times \mathcal{D}(A_1^{3/8})\right)}.
\end{multline*}
Then with formula $\lambda(\lambda-A_0)^{-1}=A_0(\lambda-A_0)^{-1}+I$ and \eqref{ResEstA0-3-aux} we deduce
$$
 |\lambda|\left\| (\lambda-A_0)^{-1}
z
\right\|_{{\bf L}^2(\mathcal{F})\times \mathcal{D}(A_1^{3/8})\times \mathcal{D}(A_1^{1/8})'}
\leq C\left\|
z
\right\|_{{\bf L}^2(\mathcal{F})\times \mathcal{D}(A_1^{5/8})\times \mathcal{D}(A_1^{1/8})},
$$
which gives the result.
\end{proof}

\section{Proof of the local in time existence for (\ref{tak2.3})}\label{sec_fix}
We prove here \cref{T01} by a fixed point argument. The proof is quite similar to the same proof for the ``flat'' case considered in \cite{plat}. For sake of completeness, we give here the main ideas of the proof.

For $R>0,T>0$ we consider the set
\begin{multline*}
\mathfrak{B}_{R,T}\ov \left\{
(F,G) \in L^2(0,T;{\bf L}^2(\mathcal{F}))\times L^2(0,T;H_{0}^{1/2}(\mathcal{I})) \ ; \  \|(F,G)\|_{L^2(0,T;{\bf L}^2(\mathcal{F}))\times L^2(0,T;H^{1/2}(\mathcal{I})))}\leq R
\right\}.
\end{multline*}

For any $(F,G)\in \mathfrak{B}_{R,T}$, we consider the solution $(w,\eta,q)$ of system \eqref{gev0.3-ter} given by \cref{T04}. In particular
\begin{equation}\label{bea0.8A}
w\in L^2(0,T;{\bf H}^2(\mathcal{F}))\cap C^0([0,T];{\bf H}^1(\mathcal{F})) \cap H^1(0,T;{\bf L}^2(\mathcal{F})),
\quad
q\in L^2(0,T;H^1(\mathcal{F}) )
\end{equation}
\begin{equation}\label{bea0.9A}
\eta \in L^2(0,T;H_{0}^{7/2}(\mathcal{I}))\cap C^0([0,T];H_{0}^{5/2}(\mathcal{I})) \cap H^1(0,T;H_{0}^{3/2}(\mathcal{I})),
\end{equation}
\begin{equation}\label{bea1.0A}
\partial_t \eta \in L^2(0,T;H_{0}^{3/2}(\mathcal{I}))\cap C^0([0,T];H_{0}^{1/2}(\mathcal{I})) \cap H^1(0,T;H_{0}^{1/2}(\mathcal{I})'),
\end{equation}
with
\begin{multline}\label{bea4.4}
\|w\|_{L^2(0,T;{\bf H}^2(\mathcal{F}))\cap C^0([0,T];{\bf H}^1(\mathcal{F})) \cap H^1(0,T;{\bf L}^2(\mathcal{F}))}
+
\|q\|_{L^2(0,T;H^1(\mathcal{F}) )},
\\
+
\|\eta\|_{L^2(0,T;H^{7/2}(\mathcal{I}))\cap C^0([0,T];H^{5/2}(\mathcal{I}))}
+
\|\partial_t \eta\|_{L^2(0,T;H^{3/2}(\mathcal{I}))\cap C^0([0,T];H^{1/2}(\mathcal{I}))}\\
\leq C_0\left(R+\|[w^0, \eta^0_1, \eta^0_2]\|_{\mathbf{H}^1(\mathcal{F})\times H^{3+\varepsilon}(\mathcal{I})\times  H^{1+\varepsilon}(\mathcal{I})}\right).
\end{multline}
Above and below, $C_0$ denotes a positive constant that depends on $\|\eta_1^0\|_{W^{7,\infty}(\mathcal{I})}$ only.

In what follows, we take $R$ (large enough) such that
\begin{equation}\label{RgeqCondIni}
R\geq 1+\|[w^0, \eta^0_1, \eta^0_2]\|_{\mathbf{H}^1(\mathcal{F})\times H^{3+\varepsilon}(\mathcal{I})\times  H^{1+\varepsilon}(\mathcal{I})}.
\end{equation}
First we notice that by interpolation, \eqref{bea4.4} yields
\begin{equation}\label{EqMehdi04012017-0}
\|\eta\|_{H^{3/4}(0,T;H^{2}(\mathcal{I}))}+\|\eta\|_{L^4(0,T;H^{3}(\mathcal{I}))}
+\|\partial_t \eta\|_{L^4(0,T;H^{1}(\mathcal{I}))}
+\|w\|_{L^{8}(0,T;{\bf H}^{5/4}(\mathcal{F}))}
\leq C_0R.
\end{equation}
The difference with respect to the proof in \cite{plat} is that here our formula for $X$ and $Y$ (see \eqref{bea3.4} and \eqref{bea3.5}) involves $\eta_1^0$. Nevertheless, one can write
$$
X(t,y_1,y_2)=\left(y_1,x_2(1+\zeta(t,x_1))\right)\quad \mbox{ and }\quad Y(t,x_1,x_2)=\left(x_1,\frac{y_2}{1+\zeta(t,y_1)}\right)
$$
where 
$$
\zeta(t,y_1)\ov \frac{\eta(t,y_1)-\eta_1^0(y_1)}{1+\eta_1^0(y_1)}.
$$
Using that $\eta_1^0\in H^{3+\varepsilon}(\mathcal{I})$, $\eta_1^0>-1$ we deduce that
$$
\frac{1}{1+\eta_1^0}\in H^{3+\varepsilon}(\mathcal{I}).
$$
Combining this with Sobolev embeddings, we deduce that 
\begin{multline}\label{arnak}
\|\zeta\|_{C^0([0,T];H^{5/2}(\mathcal{I}))}
+
\|\partial_t \zeta\|_{L^2(0,T;H^{3/2}(\mathcal{I}))\cap C^0([0,T];H^{1/2}(\mathcal{I}))}
\\
+
\|\zeta\|_{H^{3/4}(0,T;H^{2}(\mathcal{I}))}
+\|\zeta\|_{L^4(0,T;H^{3}(\mathcal{I}))}
+\|\partial_t \zeta\|_{L^4(0,T;H^{1}(\mathcal{I}))}
\leq C_0R.
\end{multline}
In particular, we have the same estimates for $\zeta$ than for $\eta$ except the estimate in $L^2(0,T;H^{7/2}(\mathcal{I}))$. In the proof of \cite{plat}, we only need the norms in \eqref{arnak} for the estimates associated with the 
change of variables (see \eqref{bea4.5}--\eqref{bea5.1}) and this allows us to prove
that for $T$ small enough, we can construct the change of variables defined in \cref{sec_construc} and consider
the mapping
\begin{equation}\label{bea0.4}
\mathcal{Z} : (F,G) \mapsto (\widehat{F}(\eta,w,q),\widehat{G}_{\eta_1^0}(\eta,w))
\end{equation}
where the maps $\widehat{F}$ and $\widehat{G}_{\eta_1^0}$ are defined by \eqref{bea4.2} and \eqref{bea4.3}, and $(w,\eta,p)$ is solution of system \eqref{gev0.3}-\eqref{bis}.
We can also show that 
\begin{equation}\label{11:15}
\|\mathcal{Z}(F,G)\|_{L^2(0,T;{\bf L}^2(\mathcal{F}))\times L^2(0,T;H^{1/2}(\mathcal{I}))}\leq C_1T^{1/8}R^{N_1},
\end{equation}
for some $N_1\geq 2$. 
More precisely, the main difference from \cite{plat} is the following:
using Proposition A.1 in \cite{plat}, \eqref{arnak} and $\zeta(0,\cdot)=0$, we deduce 
that
$$
\|\zeta\|_{L^\infty(0,T;H^2(\mathcal{I}))} \leq CT^{1/6}\|\zeta\|_{H^{3/4}(0,T;H^{2}(\mathcal{I}))} \leq CT^{1/6}R.
$$
This yields 
$$
\|\nabla Y (X)-I_2\|_{L^\infty(0,T;L^\infty(\mathcal{F})^4)}
+\|\det(\nabla X)-1\|_{L^\infty(0,T;L^\infty(\mathcal{F}))}
\leq CT^{1/6}R
$$
instead of (6.16) in \cite{plat}.
Then, following the computation in \cite{plat} we deduce \eqref{11:15}.

From \eqref{11:15}, for all $T\leq C_1^{-8}R^{8-8N_1}$ we have 
$$
\mathcal{Z}(F,G)\in \mathfrak{B}_{R,T}.
$$

Similarly, taking $T$ possibly smaller, we can also show that 
$\mathcal{Z}$ is a strict contraction on $\mathfrak{B}_{R,T}$ and using the Banach fixed point theorem, we deduce the existence and uniqueness of $(F,G)\in \mathfrak{B}_{R,T}$
such that
$$
\mathcal{Z}((F,G))=(F,G).
$$
The corresponding solution $(\eta,w,q)$ of system \eqref{gev0.3}-\eqref{bis} is a solution of \eqref{tak2.3-CV2}--\eqref{su4.3-CV-old}.

The proof of the uniqueness is similar to the proof of uniqueness given in \cite{plat}.

\appendix

\section{Formula for the change of variables}\label{sec_formulas}
Let us give some formulas for the change of variables 
$$
X(t,y_1,y_2)=\left(y_1,x_2(1+\zeta(t,x_1))\right)\quad \mbox{ and }\quad Y(t,x_1,x_2)=\left(x_1,\frac{y_2}{1+\zeta(t,y_1)}\right)
$$
that are used in \cref{sec_fix} for the fixed point.
We also need these formulas for the study of the linear system (see \cref{toflat}), and in that case, $\zeta=\eta_1^0$, $X=\widetilde X$, $a=\Tilde a$ and $b=\Tilde b$ are independent of time.
\begin{equation}\label{bea4.5}
\nabla X (t,y_1,y_2)= 
\begin{bmatrix}
1 & 0 \\
y_2 \partial_s \zeta & 1+\zeta
\end{bmatrix},
\quad 
b(t,y_1,y_2)= 
\begin{bmatrix}
1+\zeta & 0 \\
-y_2 \partial_s \zeta & 1
\end{bmatrix},
\end{equation}
\begin{equation}\label{bea4.6}
\nabla Y (t,x_1,x_2)= 
\begin{bmatrix}
1 & 0 \\
-x_2 \frac{\partial_s \zeta}{(1+\zeta)^2} & \frac{1}{1+\zeta}
\end{bmatrix},
\quad 
a(t,x_1,x_2)= 
\begin{bmatrix}
 \frac{1}{1+\zeta} & 0 \\
x_2 \frac{\partial_s \zeta}{(1+\zeta)^2} &1
\end{bmatrix}.
\end{equation}
\begin{equation}\label{bea4.7}
a(X)= 
\begin{bmatrix}
 \frac{1}{1+\zeta} & 0 \\
y_2 \frac{\partial_s \zeta}{1+\zeta} &1
\end{bmatrix},
\quad
\nabla Y (X)= 
\begin{bmatrix}
1 & 0 \\
-y_2 \frac{\partial_s \zeta}{1+\zeta} & \frac{1}{1+\zeta}
\end{bmatrix},
\end{equation}
\begin{equation}\label{bea5.0}
\nabla Y (X)-I_2= 
\begin{bmatrix}
0 & 0 \\
-y_2 \frac{\partial_s \zeta}{1+\zeta} & \frac{-\zeta}{1+\zeta}
\end{bmatrix},
\quad
\det(\nabla X)=1+\zeta,
\end{equation}
\begin{equation}\label{bea4.8}
\frac{\partial a}{\partial x_1}(X)= 
\begin{bmatrix}
 \frac{-\partial_s \zeta}{(1+\zeta)^2} & 0 \\
y_2 \frac{\partial_{ss} \zeta(1+\zeta)-2(\partial_{s} \zeta)^2 }{(1+\zeta)^2} &0
\end{bmatrix},
\quad
\frac{\partial a}{\partial x_2}(X)= 
\begin{bmatrix}
0 & 0 \\
\frac{\partial_s \zeta}{(1+\zeta)^2} &0
\end{bmatrix},
\end{equation}
\begin{equation}\label{bea4.8CC}
\frac{\partial^2 a}{\partial x_1\partial x_2}(X)= 
\begin{bmatrix}
 0 & 0 \\
\frac{\partial_{ss} \zeta(1+\zeta)-2(\partial_{s} \zeta)^2 }{(1+\zeta)^3} &0
\end{bmatrix},
\end{equation}
\begin{equation}\label{bea4.9}
\frac{\partial^2 a}{\partial x_1^2}(X)= 
\begin{bmatrix}
 \frac{-\partial_s \zeta}{(1+\zeta)^2} & 0 \\
y_2 \frac{\partial_{sss} \zeta(1+\zeta)^2-6(1+\zeta)\partial_{s} \zeta\partial_{ss}\zeta+6(\partial_s\zeta)^3 }{(1+\zeta)^3} &0
\end{bmatrix},
\quad
\frac{\partial^2 a}{\partial x_2^2}(X)= 0,
\end{equation}
\begin{equation}\label{bea5.1}
\frac{\partial}{\partial x_1}\nabla Y (X)= 
\begin{bmatrix}
0 & 0 \\
y_2 \frac{-\partial_{ss} \zeta(1+\zeta)+2(\partial_s \zeta)^2}{(1+\zeta)^2} & \frac{-\partial_s \zeta}{(1+\zeta)^2}
\end{bmatrix},
\quad
\frac{\partial}{\partial x_2}\nabla Y (X)= 
\begin{bmatrix}
0 & 0 \\
- \frac{\partial_s \zeta}{(1+\zeta)^2} & 0
\end{bmatrix}.
\end{equation}
\begin{equation}\label{bea5.2}
\partial_t a(X)= 
\begin{bmatrix}
 \frac{-\partial_t \zeta}{(1+\zeta)^2} & 0 \\
y_2 \frac{\partial_{ts} \zeta(1+\zeta)-2\partial_s \zeta\partial_t \zeta}{(1+\zeta)^2} & 0
\end{bmatrix},
\quad
\partial_t Y(X)
=\begin{bmatrix}
0
\\
 -y_2\frac{\partial_t \zeta}{1+\zeta}
\end{bmatrix}.
\end{equation}

We also recall here how to obtain formulas \eqref{formb}: differentiating \eqref{gev8.1}, we deduce successively 
$$
\frac{\partial w_i}{\partial x_j}=\sum_{k} \frac{\partial \widetilde a_{ik}}{\partial x_j} \widetilde{w}_k(\widetilde Y)
+
\sum_{k,\ell} \widetilde a_{ik} \frac{\partial \widetilde{w}_k}{\partial y_\ell}(\widetilde Y) \frac{\partial \widetilde Y_\ell}{\partial x_j}
$$
and
\begin{multline*}
\frac{\partial^2 w_i}{\partial x_j^2}
= \sum_{k} \frac{\partial^2 \widetilde a_{ik}}{\partial x_j^2} \widetilde{w}_k(\widetilde Y)
+2\sum_{k,\ell} \frac{\partial \widetilde a_{ik}}{\partial x_j} \frac{\partial \widetilde{w}_k}{\partial y_\ell}(\widetilde Y) \frac{\partial \widetilde Y_\ell}{\partial x_j}
\\
+\sum_{k,\ell,m}  \widetilde a_{ik}\frac{\partial^2 \widetilde{w}_k}{\partial y_\ell\partial y_m}(\widetilde Y) 
	\frac{\partial \widetilde Y_\ell}{\partial x_j} \frac{\partial \widetilde Y_m}{\partial x_j}
+\sum_{k,\ell} \widetilde a_{ik} \frac{\partial \widetilde{w}_k}{\partial y_\ell}(\widetilde Y) \frac{\partial^2 \widetilde Y_\ell}{\partial x_j^2}.
\end{multline*}
Composing by $\widetilde Y$ and multiplying by $\widetilde b_{\alpha i}$, we deduce the first formula of \eqref{formb}. The second one can be done in a similar way.

\bibliography{references}
\bibliographystyle{plain}
\end{document}